\newcommand{\cal}[1]{\mathcal{#1}}
\theoremstyle{plain}
\newtheorem{theo}{Theorem}
\newtheorem{lemma}{Lemma}[section]
\newtheorem{proposition}[lemma]{Proposition}
\newtheorem{corollary}[lemma]{Corollary}
\theoremstyle{definition}
\newtheorem*{definition}{Definition}
\begin{document}
\title{Dynamical properties of the Weil-Petersson metric}
\author{Ursula Hamenst\"adt}
\thanks 
{Partially supported by Sonderforschungsbereich 611.}

\date{January 26, 2008}

\begin{abstract}
Let ${\cal T}(S)$ be the
Teichm\"uller space of an oriented surface $S$ of finite type.
We discuss the action of subgroups of the mapping class
group of $S$ on the ${\rm CAT}(0)$-boundary of the completion
of ${\cal T}(S)$ with respect to the Weil-Petersson
metric. We show that the set of invariant Borel
probability measures for the 
Weil-Petersson 
flow on moduli space which are supported on a closed orbit is 
dense in the space of all ergodic invariant probability measures.
\end{abstract}

\maketitle

\section{Introduction}

For an oriented surface $S$ of genus $g\geq 0$ with 
$m\geq 0$ punctures and \emph{complexity} $3g-3+m\geq 2$ let 
${\cal T}(S)$ be the \emph{Teichm\"uller space} 
of all isotopy classes of complete hyperbolic 
metrics on $S$ of finite volume. Then ${\cal T}(S)$ is
a contractible manifold which can be 
equipped with
the \emph{Weil-Petersson metric}, an incomplete
K\"ahler metric of negative sectional curvature.

In spite of the lack of completeness, any
two points in ${\cal T}(S)$ can be
connected by a unique Weil-Petersson
geodesic which depends smoothly on
its endpoints \cite{DW03}. As a consequence, ${\cal T}(S)$ can
be completed to a \emph{Hadamard space}
$\overline{{\cal T}(S)}$, i.e. a
complete simply connected ${\rm CAT}(0)$-space which
however is not locally compact.

A Hadamard space $X$ admits a \emph{visual boundary}
$\partial X$, and the action of the isometry group of $X$ 
extends to an action on $\partial X$. 
For surfaces $S$ of complexity at most three,
the visual boundary $\partial\overline{{\cal T}(S)}$ 
of $\overline{{\cal T}(S)}$ was identified by
Brock and Masur \cite{BM08}, but for higher complexity
it is not known.
However, it follows from the work of Brock \cite{B05} that
the boundary is not locally compact.

The \emph{extended mapping class group} ${\rm Mod}(S)$ of all
isotopy classes of diffeomorphisms of $S$ acts on 
${\cal T}(S)$
as a group of isometries. Since every isometry of
${\cal T}(S)$ extends to an isometry of the completion
$\overline{{\cal T}(S)}$, the extended mapping
class group also acts isometrically on $\overline{{\cal T}(S)}$. 
This fact was used by Masur and Wolf \cite{MW02} to 
show that in fact every isometry of $\overline{{\cal T}(S)}$
is an extended mapping class.

An isometry $g$ of $\overline{{\cal T}(S)}$ is called \emph{axial}
if $g$ admits an axis, i.e. if there is a geodesic
$\gamma:\mathbb{R}\to \overline{{\cal T}(S)}$ 
and a number $\tau >0$ such that $g\gamma(t)=\gamma(t+\tau)$ for all $t$.
The endpoints $\gamma(\infty),\gamma(-\infty)$ are then
fixed points for the action of $g$ on $\partial\overline{{\cal T}(S)}$.
Every \emph{pseudo-Anosov} mapping class $g\in {\rm Mod}(S)$ 
is axial \cite{DW03}.
The \emph{limit set} $\Lambda$ of a subgroup $G$ 
of ${\rm Mod}(S)$ is 
the set of accumulation points in 
$\partial \overline{{\cal T}(S)}$ of an orbit of the
action of $G$ on $\overline{{\cal T}(S)}$.
The group $G$ is called \emph{non-elementary} if its
limit set contains at least three points. 
We show.

\begin{theo}\label{thm1}
Let $G<{\rm Mod}(S)$ be a non-elementary subgroup 
with limit set $\Lambda$ which contains a pseudo-Anosov element.
\begin{enumerate}
\item $\Lambda$ does not have isolated points, 
and the $G$-action on $\Lambda$ is minimal.
\item Pairs of fixed points of pseudo-Anosov elements are dense in 
$\Lambda\times \Lambda$.
\item There is a dense orbit for the action of $G$ on
$\Lambda\times \Lambda$.
\end{enumerate}
\end{theo}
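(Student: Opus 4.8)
The plan is to establish the three claims in a logical order, building up from the existence of pseudo-Anosov elements and exploiting the fact that the space $\overline{\mathcal T(S)}$ is a $\mathrm{CAT}(0)$-space with a well-behaved isometric $G$-action. The overarching strategy mirrors the classical theory of convergence groups and nonelementary groups acting on Gromov-hyperbolic spaces, adapted to the $\mathrm{CAT}(0)$-setting where hyperbolicity may fail but enough contraction is available along pseudo-Anosov axes.

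First I would set up the ``ping-pong'' / north-south dynamics for a single pseudo-Anosov $g \in G$. Since $g$ is axial with axis $\gamma$ and endpoints $\gamma(\pm\infty) \in \Lambda$, I would show that the two fixed points $\gamma(\pm\infty)$ act as an attracting–repelling pair for the $g$-action on a suitable subset of $\partial\overline{\mathcal T(S)}$: for points sufficiently transverse to the repelling fixed point, the forward iterates $g^n$ converge to $\gamma(+\infty)$, and backward iterates to $\gamma(-\infty)$. The key technical input here is the contraction property of pseudo-Anosov axes in the Weil-Petersson metric, presumably established earlier in the paper (or imported from work of Brock–Masur–Minsky or Daskalopoulos–Wentworth \cite{DW03}); this replaces the strict hyperbolicity one would have in a $\delta$-hyperbolic space. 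From nonelementarity and the containment of a pseudo-Anosov element, I would produce a second pseudo-Anosov element whose fixed-point pair is disjoint from that of $g$ — by conjugating $g$ by a group element moving its endpoints, using that $\Lambda$ has at least three points.

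With two pseudo-Anosov elements in independent position, I would run the standard ping-pong argument to generate, inside $G$, a rich supply of pseudo-Anosov elements whose attracting fixed points are spread throughout $\Lambda$. This simultaneously yields claim (1) and much of claim (2): density of attracting fixed points in $\Lambda$ gives minimality of the $G$-action, since the closure of any orbit is a nonempty closed $G$-invariant set that must contain an attracting fixed point and hence, by north-south dynamics, all of $\Lambda$; and the absence of isolated points follows because an isolated point of $\Lambda$ would be fixed by a finite-index subgroup, contradicting nonelementarity together with the abundance of distinct pseudo-Anosov fixed points. For claim (2), I would upgrade density of fixed points in $\Lambda$ to density of \emph{pairs} of fixed points in $\Lambda \times \Lambda$ by a conjugation argument: given any target pair $(\xi,\eta)$, choose group elements that push the attracting and repelling fixed points of a fixed pseudo-Anosov close to $\xi$ and $\eta$ respectively, using minimality to move points freely and the persistence of the pseudo-Anosov property and its fixed-point pair under conjugation.

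Finally, claim (3) — existence of a dense orbit on $\Lambda \times \Lambda$ — I would derive as a topological-dynamics consequence of claims (1) and (2), in the spirit of showing topological transitivity from a dense set of ``periodic'' pairs. The cleanest route is to verify topological transitivity directly: given two nonempty open sets $U, V \subseteq \Lambda \times \Lambda$, I would use density of pseudo-Anosov fixed-point pairs to locate such a pair in $U$ and another in $V$, then produce a group element carrying a neighborhood of the first into a neighborhood of the second via the contraction/north-south dynamics of an appropriately chosen pseudo-Anosov conjugate. Topological transitivity on $\Lambda \times \Lambda$ then yields a dense orbit provided $\Lambda \times \Lambda$ is second countable and has no isolated points (guaranteed by claim (1)), via a Baire-category argument. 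I expect the \textbf{main obstacle} to be the failure of local compactness of $\partial\overline{\mathcal T(S)}$, flagged explicitly in the introduction: the usual convergence-group machinery and compactness-based extraction of limits are unavailable, so every convergence statement must instead be controlled quantitatively through the Weil-Petersson contraction estimates along pseudo-Anosov axes, and care is needed that the ``transversality to the repelling fixed point'' conditions are open and generic enough to run ping-pong without appealing to compactness of the boundary.
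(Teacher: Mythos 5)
Your outline reproduces the paper's strategy for parts (1) and (2) in broad strokes, but it has a genuine gap at the point on which everything else rests: you never prove that fixed points of pseudo-Anosov elements are dense in the limit set $\Lambda$. You assert that ping-pong produces pseudo-Anosov elements ``whose attracting fixed points are spread throughout $\Lambda$,'' and then derive minimality by saying that a closed invariant set containing one attracting fixed point must contain all of $\Lambda$ by north-south dynamics. This is circular: north-south dynamics applied to a fixed point only gives the closure of the $G$-orbit of that fixed-point pair, and the statement that this closure is all of $\Lambda$ is exactly the density claim you are assuming. Recall that $\Lambda$ is defined as the set of accumulation points of an orbit $Gx_0$ in $\overline{{\cal T}(S)}$; a priori a limit of points $g_ix_0$ need not be approximated by fixed points at all, and no compactness is available to extract convergence. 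The paper's Lemma \ref{minimal} closes precisely this gap: given $g_ix_0\to\xi$, one projects $x_0$ to the translated axes $g_i\gamma$ (which are uniformly contracting by Proposition \ref{flatrank}) and uses the thin-triangle property (Lemma \ref{thintriangle}) together with ${\rm CAT}(0)$ angle comparison, in two cases according to whether the geodesics $g_i\gamma$ leave every bounded set or not, to conclude $g_i\gamma(\infty)\to\xi$. This argument is the real technical content behind parts (1) and (2) and is absent from your proposal; your alternative route to ``no isolated points'' (an isolated point would be fixed by a finite-index subgroup) is also unjustified, since stabilizers of isolated points need not have finite index, whereas the paper deduces non-isolation directly from the density of conjugate fixed points.

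Two further points. First, ``conjugating $g$ by a group element moving its endpoints'' does not by itself give a pseudo-Anosov element with fixed-point pair disjoint from $\{a,b\}$: you could have $v(a)\notin\{a,b\}$ but $v(b)\in\{a,b\}$. The paper needs Lemma \ref{pseudofix} (two pseudo-Anosov elements sharing one boundary fixed point share both, proved via the asymptotics of \cite{BMM08} and proper discontinuity of the action on ${\cal T}(S)$), and it needs McCarthy's theorem \cite{MC85} to know that the ping-pong products $wv$ are actually pseudo-Anosov; being axial with attracting/repelling behaviour on the ${\rm CAT}(0)$ boundary is not the definition of pseudo-Anosov. Second, for part (3) your Baire-category argument on $\Lambda\times\Lambda$ requires that space to be a Baire space, not merely second countable without isolated points; since the boundary is not locally compact this needs a separate argument (for instance, that the cone topology on $\partial\overline{{\cal T}(S)}$ is completely metrizable and separable). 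The paper takes a genuinely different route here: it transfers the problem to the Weil-Petersson geodesic flow on the locally compact, separable space $T^1{\cal T}(S)/G$, using the openness of the endpoint map (Lemma \ref{fiber}, proved by a ruled-surface/Gauss--Bonnet argument) to identify the flow dynamics with the $G$-action on pairs of boundary points, and runs the nested-sets argument there (Proposition \ref{thm1finish}). If you want to keep your direct approach you must supply the Baire property of $\Lambda\times\Lambda$; as it stands, that step is unsupported.
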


There is a compactification 
of ${\cal T}(S)$ whose boundary consists of the
sphere ${\cal P\cal M\cal L}$ of projective
measured geodesic laminations on $S$. 
The mapping class group acts on ${\cal T}(S)\cup {\cal P\cal M\cal L}$
as a group of homeomorphisms.
Limit sets for
subgroups of ${\rm Mod}(S)$ in ${\cal P\cal M\cal L}$
were investigated by McCarthy and Papadopoulos \cite{MCP89}.

We also look at properties of the 
\emph{Weil-Petersson geodesic
flow} $\Phi^t$ on the quotient 
of the unit tangent bundle $T^1{\cal T}(S)$ of ${\cal T}(S)$ under the
action of the mapping class group. Even though
this quotient space $T^1{\cal M}(S)$ is non-compact and 
this flow is not everywhere defined, 
it admits many invariant
Borel probability measures. Particular such measures are measures
supported on periodic orbits. Each of these measures is ergodic.

The space of all $\Phi^t$-invariant Borel probability measures
on $T^1{\cal M}(S)$ can be equipped with the
weak$^*$-topology.
Our second result is a version of Theorem 1 for 
the Weil-Petersson geodesic flow. 

\begin{theo}
A $\Phi^t$-invariant Borel probability measure 
on $T^1{\cal M}(S)$ can
be approximated in the weak$^*$-topology by measures supported
on periodic orbits.
\end{theo}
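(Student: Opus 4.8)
The plan is to follow Sigmund's scheme for systems with enough hyperbolicity, adapted to the incomplete and non-compact setting of the Weil--Petersson flow. \emph{First}, I would reduce the statement to an approximation result for \emph{ergodic} measures. By the ergodic decomposition theorem every $\Phi^t$-invariant Borel probability measure on $T^1{\cal M}(S)$ is a barycenter of ergodic measures, and a standard argument then shows that finite convex combinations of ergodic measures are weak$^*$-dense among all invariant measures. Since a finite convex combination of measures each supported on a single periodic orbit is supported on a finite union of periodic orbits, it suffices to approximate an arbitrary ergodic measure $\mu$ in the weak$^*$-topology by measures carried by closed orbits.

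\emph{Second}, fix an ergodic measure $\mu$, finitely many continuous test functions, and a tolerance $\epsilon>0$. For any invariant measure the set of points whose full orbit is defined for all time has full measure, so the Birkhoff ergodic theorem applies: for $\mu$-almost every $v\in T^1{\cal M}(S)$ the empirical measures $\frac{1}{T}\int_0^T \delta_{\Phi^s v}\,ds$ converge weak$^*$ to $\mu$ as $T\to\infty$. Since $\mu$ is a probability measure, Poincar\'e recurrence together with the ergodic theorem lets me choose such a $v$ whose orbit spends all but an $\epsilon$-proportion of its time in a fixed compact subset $K$ of the thick part of moduli space, and a sequence of times $T_n\to\infty$ at which $\Phi^{T_n}v$ returns arbitrarily close to $v$, in direction as well as in basepoint, with both endpoints lying in $K$.

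\emph{Third}, I would invoke a hyperbolic closing lemma to replace this long, nearly-returning orbit segment by a genuine periodic orbit. Because the Weil--Petersson metric has negative sectional curvature and $\overline{{\cal T}(S)}$ is a ${\rm CAT}(0)$-space in which geodesics are unique and the distance function is convex, an orbit arc that begins and ends in $K$ and nearly closes up is shadowed by a closed geodesic whose period $\tau_n$ is close to $T_n$; this closed geodesic projects to a periodic orbit of $\Phi^t$ corresponding to a pseudo-Anosov mapping class, which is axial by \cite{DW03}. Shadowing guarantees that the orbit measure $\nu_n$ of this periodic orbit is weak$^*$-close to the empirical measure of the segment, hence to $\mu$; taking convex combinations over the ergodic pieces of an arbitrary invariant measure then proves the theorem.

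The \emph{main obstacle} is the closing lemma itself in a space that is neither complete nor locally compact and whose curvature is not bounded away from $0$ or $-\infty$. Three points require care: (i) controlling excursions of the generic orbit toward the boundary strata, so that a definite proportion of the segment lies in a compact region where the contraction estimates are uniform; (ii) ensuring that the closed-up geodesic is a bona fide periodic orbit, that is, that the mapping class effecting the near-return is pseudo-Anosov and admits an axis, for which the hyperbolicity supplied by the presence of pseudo-Anosov elements in Theorem \ref{thm1} is used; and (iii) quantifying the shadowing sharply enough, and controlling the period $\tau_n$, that $\nu_n$ is genuinely close to the empirical measure in the weak$^*$-topology. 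The convexity and negative curvature of $\overline{{\cal T}(S)}$ furnish the required contraction, but all estimates must be made uniform over the compact set $K$ on which the orbit concentrates.
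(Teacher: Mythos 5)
Your skeleton --- reduction to ergodic measures, Birkhoff averages along a typical orbit, Poincar\'e recurrence to produce a nearly closing segment in the thick part, then closing that segment up into a periodic orbit --- is exactly the scheme of the paper's proof of Proposition \ref{approximation}, and your first two steps are carried out correctly. The gap is your third step: the ``hyperbolic closing lemma'' you invoke is not something that can be cited or deduced from the properties you list; it is the entire mathematical content of the theorem. Negative sectional curvature that is bounded away neither from $0$ nor from $-\infty$, together with ${\rm CAT}(0)$ convexity, does not yield shadowing: near the completion locus there is no uniform contraction, and $\overline{{\cal T}(S)}$ even contains isometrically embedded flat planes through axes of reducible elements (see the example following Proposition \ref{flatrank}). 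Concretely, the near-return at time $t_i$ is effected by a mapping class $g_i$ with $g_i^{-1}\gamma^\prime(t_i)\to\gamma^\prime(0)$, and the whole difficulty is to show that $g_i$ is pseudo-Anosov: a priori $g_i$ could be elliptic (for instance a multi-twist, which by Lemma \ref{semisimple2} fixes a boundary stratum pointwise) or reducible, in which case any axis of $g_i$ lies entirely in $\overline{{\cal T}(S)}-{\cal T}(S)$ and produces no periodic orbit of $\Phi^t$ on $T^1{\cal M}(S)$ at all. Your appeal to Theorem \ref{thm1} for point (ii) is misdirected: that theorem presupposes a subgroup already known to contain a pseudo-Anosov element and describes its limit set; it gives no criterion for a given near-return element to be pseudo-Anosov. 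Likewise \cite{DW03} tells you pseudo-Anosov classes have axes, not that your $g_i$ is pseudo-Anosov.

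What the paper actually does at this point, and what your proposal lacks, is a quantitative Gau\ss{}--Bonnet argument on ruled triangles, combined with the semisimplicity dichotomy. By Lemma \ref{semisimple2} every isometry of $\overline{{\cal T}(S)}$ is either elliptic or axial, so one may take $x_i$ to be either a fixed point of $g_i$ or the projection of $x_0$ to an axis, and form the ruled triangle with vertices $x_0,x_i,g_ix_0$. Applying the Birkhoff theorem to the characteristic function $\chi$ of a thick-part neighborhood of the density point (so $\int\chi\,d\nu>0$), one arranges that the segment spends time at least $\pi/b\delta$ in a region where the intrinsic curvature of the ruled surface is at most $-b<0$; if the side $\eta_i$ opposite to $x_0$ stayed at distance more than $\delta$ from $\zeta_i(\tau)$, one would obtain an embedded strip of width $\delta$ and curvature $\leq -b$ whose total curvature is less than $-\pi$, contradicting Gau\ss{}--Bonnet because the relevant angle tends to $\pi$. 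This single estimate simultaneously rules out that $g_i$ is elliptic, forces the axis of $g_i$ to pass through ${\cal T}(S)$ --- whence $g_i$ is pseudo-Anosov by the remark after Lemma \ref{semisimple2} --- and yields the uniform fellow-traveling (the axis comes $\delta$-close to $\zeta_i(T(\delta))$ and $\zeta_i(r_i-\tilde T(\delta))$ with $T(\delta),\tilde T(\delta)$ independent of $i$) that, via convexity of the distance function and the Sasaki-metric estimate, makes the periodic orbit measures converge weak$^*$ to $\nu$. Your items (i)--(iii) correctly name these difficulties, but naming them is where the proposal stops; without the dichotomy of Lemma \ref{semisimple2} and the Gau\ss{}--Bonnet strip argument (or a substitute for them), step three is an assertion rather than a proof.
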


The organization of this note is as follows. In Section 2 we
review some geometric properties
of Hadamard spaces. Section 3 explains
some geometric properties of pseudo-Anosov mapping classes.
In Section 4 we look at groups of isometries and
establish the first and the second part of Theorem 1. 
In Section 5, we complete the proof of Theorem 1 and show 
Theorem 2.

\section{Basic ${\rm CAT}(0)$-geometry}

The purpose of this section is to collect some general 
geometric
properties of ${\rm CAT}(0)$-spaces which are 
needed for the investigation of Weil-Petersson space.

A ${\rm CAT}(0)$-space is defined as follows.
A \emph{triangle} $\Delta$ in a geodesic metric space consists
of three vertices connected by three (minimal) geodesic arcs $a,b,c$. 
A \emph{comparison triangle} $\bar \Delta$ for $\Delta$ 
in the euclidean plane
is a triangle in $\mathbb{R}^2$ 
with the same side-lengths as $\Delta$. By the triangle inequality,
such a comparison triangle exists always, and it is 
unique up to isometry. For a point $x\in a\subset \Delta$
the comparison point of $x$ 
in the comparison triangle $\bar\Delta$ 
is the point on the side $\bar a$ of $\bar\Delta$ corresponding to $a$
whose distance to the endpoints of $\bar a $
coincides with the distance of $x$ to the
corresponding endpoints of $a$.

A geodesic metric space $(X,d)$ 
is called a \emph{${\rm CAT}(0)$-space}
if for every geodesic triangle $\Delta$ in $X$ with 
sides $a,b,c$ and every comparison triangle
$\bar\Delta$ 
in the euclidean plane 
with sides $\bar a,\bar b,\bar c$  
and for all $x,y\in \Delta$ and all 
comparison points 
$\bar x,\bar y\in \bar \Delta$ we have
\[d(x,y)\leq d(\bar x,\bar y).\]
A complete ${\rm CAT}(0)$-space is called a \emph{Hadamard space}.

In a Hadamard space $X$, the distance
function is convex: If $\gamma,\zeta$ are two
geodesics in $X$ parametrized on the same interval 
then the function 
$t\to d(\gamma(t),\zeta(t))$ is convex.
For two geodesics $\gamma,\zeta$ issuing from the same
point $\gamma(0)=\zeta(0)$, the \emph{Alexandrov angle} 
between $\gamma,\zeta$ is defined. If $X$ is a Riemannian
manifold of non-positive curvature, then this angle
coincides with the angle between the tangents of 
$\gamma,\zeta$ at $\gamma(0)$ (see \cite{BH99}).

For a fixed point $x\in X$,
the \emph{visual boundary} $\partial X$ of $X$
is defined to be the space
of all geodesic rays issuing from $x$ equipped with the
topology of uniform convergence on bounded sets.
This definition is independent of the choice of $x$.
We denote the point in $\partial X$ defined by 
a geodesic ray $\gamma:[0,\infty)\to X$ by $\gamma(\infty)$.
We also say that $\gamma$ 
\emph{connects} $x$ to 
$\gamma(\infty)$. 
The union $X\cup \partial X$ has a natural topology
which restricts to the usual topology on $X$ and such that
$X$ is dense in $X\cup \partial X$. The isometry
group of $X$ acts as a group of homeomorphisms on 
$X\cup \partial X$.

A subset $C\subset X$ is 
\emph{convex} if for $x,y\in C$ the geodesic connecting
$x$ to $y$ is contained in $C$ as well. 
For every complete convex set $C\subset X$
and every $x\in X$ there is a unique point
$\pi_C(x)\in C$ of smallest distance to $x$ (Proposition II.2.4 of 
\cite{BH99}).
Now let $J\subset \mathbb{R}$ be a closed connected set and
let $\gamma:J\to X$ be a geodesic arc. Then $\gamma(J)\subset X$
is complete and convex and hence there is a 
shortest distance projection $\pi_{\gamma(J)}:X\to \gamma(J)$.
The projection $\pi_{\gamma(J)}:X\to \gamma(J)$ is 
distance non-increasing.

The following definition is due to Bestvina and
Fujiwara (Definition 3.1 of \cite{BF08}).

\begin{definition}\label{contracting}
A geodesic arc $\gamma:J\to X$ is 
\emph{$B$-contracting}
for some $B>0$ if for every closed 
metric ball $K$ in $X$ which is disjoint
from $\gamma(J)$ the diameter of the 
projection $\pi_{\gamma(J)}(K)$ does not exceed $B$. 
\end{definition}
We call a geodesic \emph{contracting} if it is $B$-contracting
for some $B>0$.
As an example, every geodesic in a ${\rm CAT}(\kappa)$-space for some
$\kappa<0$ is $B$-contracting for a number $B=B(\kappa)>0$ only
depending on $\kappa$.

The next lemma (Lemma 3.2 and 3.5 of \cite{BF08}) shows that 
a triangle containing a $B$-contracting
geodesic as one of its sides is
uniformly thin.

\begin{lemma}\label{thintriangle} 
Let $\gamma:[a,b]\to X$ be a $B$-contracting geodesic.
If $x\in X$ and if $a= \pi_{\gamma[a,b]}(x)$ then for every
$t\in [a,b]$ the geodesic connecting $x$ to $\gamma(t)$
passes through the $3B+1$-neighborhood of $\gamma(a)$. 
\end{lemma}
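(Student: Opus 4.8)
The plan is to study the unit-speed geodesic $\sigma\colon[0,\ell]\to X$ from $\sigma(0)=x$ to $\sigma(\ell)=\gamma(t)$, where $\ell=d(x,\gamma(t))$, together with the shortest-distance projection $\pi=\pi_{\gamma[a,b]}$. Two preliminary observations set things up. Since $\gamma(a)=\pi(x)$ is the point of $\gamma[a,b]$ nearest to $x$ and $\gamma(t)$ lies on $\gamma[a,b]$, we have $r:=d(x,\gamma(a))\le d(x,\gamma(t))=\ell$, so the point $\sigma(r)$ is defined. Moreover, by convexity of the distance to the convex set $\gamma[a,b]$ the function $s\mapsto D(s):=d(\sigma(s),\gamma[a,b])$ is convex; as $D(\ell)=0$ is its global minimum, $D$ is non-increasing on $[0,\ell]$, decreasing from $r$ to $0$.

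First I would extract what the contracting hypothesis gives near $x$. For every $\rho<r$ the closed ball $\overline B(x,\rho)$ is disjoint from $\gamma[a,b]$, so by Definition \ref{contracting} the set $\pi(\overline B(x,\rho))$ has diameter at most $B$. Since $x\in\overline B(x,\rho)$ with $\pi(x)=\gamma(a)$, and $\sigma(s)\in\overline B(x,\rho)$ whenever $s\le\rho$, it follows that $d(\pi(\sigma(s)),\gamma(a))\le B$ for all $s\le\rho$; letting $\rho\to r$ and using that $\pi$ is distance non-increasing gives $d(\pi(\sigma(r)),\gamma(a))\le B$. Thus the foot point of $\sigma(r)$ has not moved more than $B$ away from $\gamma(a)$.

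I would then take $\sigma(r)$ as the point to be shown close to $\gamma(a)$. By the triangle inequality,
\[
d(\sigma(r),\gamma(a))\le d(\sigma(r),\pi(\sigma(r)))+d(\pi(\sigma(r)),\gamma(a))\le D(r)+B,
\]
so it suffices to bound $D(r)\le 2B+1$; combining the two inequalities then yields $d(\sigma(r),\gamma(a))\le 3B+1$, which is the assertion since $\sigma(r)$ lies on the geodesic from $x$ to $\gamma(t)$. (If $r\le B$ one may instead take $x=\sigma(0)$ itself, which already lies within $B\le 3B+1$ of $\gamma(a)$.)

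The main obstacle is precisely this last bound $D(r)\le 2B+1$, i.e.\ showing that by arc length $r$ the geodesic $\sigma$ has already come within $O(B)$ of $\gamma[a,b]$. Convexity of $D$ alone is too weak here, yielding only the chord estimate $D(r)\le r(1-r/\ell)$, which is of order $r$ when $\gamma(t)$ is far from $\gamma(a)$; flat examples show that the pure ${\rm CAT}(0)$ structure does not suffice, so the contracting hypothesis must be used essentially at this step. The point of that hypothesis is to forbid $\sigma$ from drifting along $\gamma[a,b]$ while staying at bounded distance from it: a naive chaining of the ball estimate over $[0,r]$ loses a factor proportional to the length of the interval, so the right approach is to cover $[0,r]$ by balls $\overline B(\sigma(s),D(s)-\varepsilon)$ whose radii are comparable to $D(s)$ rather than to $B$, and to use the monotonicity and strict convexity of $D$ (which rule out a positive-level plateau, hence any genuinely parallel stretch) to control the number of balls needed and thereby bound $D(r)$ by $2B$, the extra $+1$ absorbing the $\varepsilon$-slack in the definition of contraction. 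This projection-control estimate is the heart of the matter, and it is where I expect the real work to lie.
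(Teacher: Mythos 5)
Your two preliminary paragraphs are correct, but they constitute the easy half of the lemma. For $\rho<r$ the ball $\overline B(x,\rho)$ is disjoint from $\gamma[a,b]$, so the contracting property plus continuity of $\pi$ indeed gives $d(\pi(\sigma(r)),\gamma(a))\le B$, and the triangle inequality correctly reduces the statement to the estimate $D(r)\le 2B+1$. But that estimate \emph{is} the content of the lemma, and you never prove it: your final paragraph explicitly defers it as ``where I expect the real work to lie.'' So what you have is a sound reduction plus an acknowledged gap, not a proof. For calibration, note that the paper itself gives no proof either --- it quotes the statement from Lemmas 3.2 and 3.5 of \cite{BF08} --- and the missing estimate is precisely what is established there.

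Moreover, the strategy you sketch for closing the gap would not work as stated. Covering $[0,r]$ by balls $\overline B(\sigma(s),D(s)-\varepsilon)$ is essentially vacuous: the single ball $\overline B(x,r-\varepsilon)$ already covers $\sigma[0,r-\varepsilon]$, and all such balls control only the drift of the projection $\pi\circ\sigma$ along $\gamma$, never the distance $D$ itself. Restricted to $[0,r]$, nothing forces $D$ to decay at all: convexity permits $D$ to stay of order $r$ there (your own chord bound $r(1-r/\ell)$ is the right order when $\ell\gg r$), and while convexity together with $D(\ell)=0$ does exclude a plateau, a strictly decreasing convex function can decay arbitrarily slowly, so ``strict convexity controls the number of balls'' is not an argument --- a quantitative rate is exactly what has to be produced. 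What actually forces $D$ down to size $O(B)$ near parameter $r$ is a global count along all of $[0,\ell]$, using the part of $\sigma$ beyond $\sigma(r)$ that your proposal never touches: the projection $\pi\circ\sigma$ must travel from $\gamma(a)$ all the way to $\pi(\gamma(t))=\gamma(t)$, a distance $t-a$; crossing a ball $\overline B(\sigma(s),D(s)-\varepsilon)$ advances it by at most $B$ while consuming about $D(s)$ units of arc length; and the total length available is $\ell\le r+(t-a)$. Hence if $D$ remained above $2B+1$ on a long interval beyond $r$ (a stretch running ``parallel'' to $\gamma$), the projection would gain at most $B$ per $2B+1$ units of length and $\sigma$ could never reach $\gamma(t)$ within its allotted length. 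Making this count precise bounds the length of such a stretch linearly in $B$, and carrying it out with care, as in \cite{BF08}, yields the constant $3B+1$. This counting argument is the heart of the lemma and is entirely absent from your proposal.
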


On the other hand, 
thinness of triangles with
a fixed geodesic $\gamma$ as one of the three sides guarantees that
$\gamma$ is contracting. This is formulated in the
following useful criterion to
detect contracting geodesics. For its formuation,
note that in a ${\rm CAT}(0)$-space, the 
\emph{Alexandrov angle} between two geodesics issuing
from the same point is defined (see \cite{BH99}).

\begin{lemma}\label{quadrangle}
Let $\gamma:J\to X$ be a geodesic such that
there is a number $B>0$ with the following property.
Assume that
for all $[a,b]\subset J$ with $\vert b-a\vert \geq B/4$
and every geodesic quadrangle $Q$ in $X$ with 
one side $\gamma[a,b]$ and an angle at least $\pi/2$ at
$\gamma(a),\gamma(b)$ the geodesic arc connecting
the two vertices of $Q$ which are distinct from
$\gamma(a),\gamma(b)$ passes through the
$B/4$-neighborhood of $\gamma[a,b]$. Then $\gamma(J)$ is
$B$-contracting. 
\end{lemma}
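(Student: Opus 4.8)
The plan is to verify the defining property of a $B$-contracting geodesic directly: given a closed metric ball $K$ disjoint from $\gamma(J)$, I want to show that any two points $x,y\in K$ have projections $\pi_{\gamma(J)}(x)=\gamma(a)$ and $\pi_{\gamma(J)}(y)=\gamma(b)$ with $\lvert b-a\rvert\le B$, since $\gamma$ is an isometric embedding of $J$ and hence $d(\gamma(a),\gamma(b))=\lvert b-a\rvert$ equals the diameter of the projection. The three tools I would use are: the projection $\pi_{\gamma(J)}$ is distance non-increasing (quoted in the excerpt); the standard ${\rm CAT}(0)$ fact that for any $u\in X$ and any $p\in\gamma(J)$ the Alexandrov angle at $\pi_{\gamma(J)}(u)$ between the geodesics to $u$ and to $p$ is at least $\pi/2$ (Proposition II.2.4 of \cite{BH99}); and the convexity of metric balls in a ${\rm CAT}(0)$-space.

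First I would set up the basic quadrangle. Assume $\lvert b-a\rvert\ge B/4$ (otherwise that pair already satisfies the bound) and consider the geodesic quadrangle with vertices $\gamma(a),x,y,\gamma(b)$ and one side $\gamma[a,b]$. By the angle property the interior angles at $\gamma(a)$ and at $\gamma(b)$ are at least $\pi/2$, so the hypothesis of the lemma applies and the side $[x,y]$ meets the $B/4$-neighborhood of $\gamma[a,b]$; pick $z\in[x,y]$ and $s\in[a,b]$ with $d(z,\gamma(s))\le B/4$. Since $K$ is a ball and balls are convex, $z\in K$; writing $K=\overline{B}(c,r)$ this gives $d(c,\gamma(J))\le d(c,z)+d(z,\gamma(s))\le r+B/4$, so the margin $d(K,\gamma(J))=d(c,\gamma(J))-r$ is at most $B/4$. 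In particular, a ball whose margin exceeds $B/4$ can contain no such pair, so its projection has diameter at most $B/4$; the content of the lemma is thus concentrated in the case of nearly tangent balls.

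The main obstacle is precisely this nearly tangent case, where the single application above is vacuous and the crude estimates $\lvert b-a\rvert\le d(x,y)\le 2r$ only bound the projection in terms of the radius. To remove the dependence on $r$ I would iterate the hypothesis by bisection: given a point $z\in[x,y]$ projecting to $\gamma(s)$ with $d(z,\gamma(J))\le B/4$, apply the hypothesis separately to the quadrangles on the bases $\gamma[a,s]$ and $\gamma[s,b]$ (whose base angles are again at least $\pi/2$, now because $\gamma(a),\gamma(s),\gamma(b)$ are the projections of $x,z,y$), producing further points of $[x,y]$ within $B/4$ of $\gamma$ whose projections subdivide $[a,b]$. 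Using that the fibers of $\pi_{\gamma(J)}$ and the sublevel set $\{u\in[x,y]:d(u,\gamma(J))\le B/4\}$ are convex, one shows this sublevel set is a subsegment of $[x,y]$ whose projection exhausts $[a,b]$, which pins points $z_-,z_+\in K$ with $\pi_{\gamma(J)}(z_\pm)=\gamma(a),\gamma(b)$ and $d(z_\pm,\gamma(J))\le B/4$. The decisive step, and the one I expect to be hardest, is to upgrade this to a bound on $d(x,y)$ that is independent of $r$ --- equivalently, to show the extreme points $x,y$ lie at uniformly bounded distance from $\gamma$ --- since it is here that one must exploit the hypothesis at all scales rather than a single triangle comparison; once $d(x,y)$ is bounded, $\lvert b-a\rvert\le d(x,y)$ yields $\lvert b-a\rvert\le B$ after tracking the constants, completing the verification that $\gamma(J)$ is $B$-contracting.
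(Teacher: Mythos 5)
Your opening moves are fine: the quadrangle on $\gamma[a,b]$ with vertices $\gamma(a),x,y,\gamma(b)$ does have angles at least $\pi/2$ at the two projection points, and convexity of $K$ correctly reduces everything to balls whose distance from $\gamma(J)$ is at most $B/4$. The genuine gap is the step you defer to the end, and it is not merely hard --- it is false. You propose to finish by bounding $d(x,y)$ independently of the radius $r$ (equivalently, by showing $x,y$ lie at uniformly bounded distance from $\gamma$) and then invoking $\vert b-a\vert\le d(x,y)$. No such bound can exist: take $X=\mathbb{H}^2$, which satisfies the hypothesis of the lemma for a suitable $B$ (its geodesics are uniformly contracting, as the paper notes for any ${\rm CAT}(\kappa)$-space with $\kappa<0$), and let $K$ be a ball of enormous radius $r$ tangent to $\gamma$. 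Then $K$ contains points $x,y$ with $d(x,y)$ comparable to $2r$ and with distance to $\gamma$ comparable to $r$, while $\pi_{\gamma(J)}(K)$ nevertheless has uniformly bounded diameter. So the conclusion you must prove holds in a situation where every quantity you plan to bound is unbounded; the inequality $\vert b-a\vert\le d(x,y)$ is exactly the step a contraction argument cannot afford, since the whole point is that the projection collapses distances. Any correct proof has to bound the projected displacement $\vert b-a\vert$ directly, never $d(x,y)$. (Separately, your claim that the projection of the sublevel segment $\{u\in[x,y]:d(u,\gamma(J))\le B/4\}$ exhausts $[a,b]$ is asserted rather than proved, but this is secondary.)

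The paper's proof supplies the missing mechanism by localizing the quadrangle instead of spanning it over all of $[a,b]$. Fix $x$ with $d(x,\gamma(J))=R$ and $\pi_{\gamma(J)}(x)=\gamma(a)$; it suffices to show that every $y$ with $d(x,y)<R$ has $\pi_{\gamma(J)}(y)=\gamma(c)$ with $c<a+B$ (disjointness of the ball from $\gamma(J)$ enters only through the inequality $d(x,y)<R$ for $x$ the center). Assuming $c>a+B/4$, apply the hypothesis to the quadrangle $x,\gamma(a),\gamma(a+B/4),y$ over the \emph{short} segment $\gamma[a,a+B/4]$: the angle at $\gamma(a)$ is at least $\pi/2$ by the projection property, and the angle at $\gamma(a+B/4)$ is at least $\pi/2$ because the triangle with vertices $\gamma(a+B/4),\gamma(c),y$ has angle at least $\pi/2$ at $\gamma(c)$ and angle sum at most $\pi$. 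The point $z\in[x,y]$ produced by the hypothesis is now pinned near $\gamma(a)$, which yields two estimates your global quadrangle cannot see: first, $d(x,z)\ge R-B/4$, since $d(x,\gamma(J))=R$ and $d(z,\gamma(J))\le B/4$; second, $\pi_{\gamma(J)}(z)\in\gamma[a-B/4,a+B/2]$, so if $c\ge a+B$ then, because $\pi_{\gamma(J)}$ is distance non-increasing, $d(z,y)\ge d(\pi_{\gamma(J)}(z),\pi_{\gamma(J)}(y))\ge B/2$. Since $z$ lies on the geodesic $[x,y]$, these add: $d(x,y)=d(x,z)+d(z,y)\ge R+B/4>R$, contradicting $d(x,y)<R$. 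Note the direction in which the Lipschitz property of the projection is used: it bounds $d(z,y)$ from \emph{below} by a projected distance, which is precisely the reversal of your lossy step, and it is what lets the argument control $\vert b-a\vert$ while $d(x,y)$ and the distances from $x,y$ to $\gamma$ remain unbounded.
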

\begin{proof} Let $\gamma:J\to X$ be a geodesic
which satisfies the assumption in the lemma.
We have to show that 
$d(\pi_{\gamma(J)}(x),\pi_{\gamma(J)}(y))\leq B$
for all $x\in X$ with 
$d(x,\gamma(J))=R >0$ and every $y\in X$ with
$d(x,y)<R$.

For this let 
$x\in X$ with $d(x,\gamma(J))=R>0$.
Assume that $\pi_{\gamma(J)}(x)=\gamma(a)$. Let $y\in X$
and write 
$\pi_{\gamma(J)}(y)=\gamma(c)$ where $c\geq a$
without loss of generality (otherwise reverse the orientation of 
$\gamma$). The angle at $\gamma(c)$ between the
geodesic connecting $\gamma(c)$ to $y$ and the
subarc of $\gamma$ (with reversed orientation) 
connecting $\gamma(c)$ to 
$\gamma(a)$ is not smaller than $\pi/2$. Since the
angle sum of a triangle in a ${\rm CAT}(0)$-space does not
exceed $\pi$, this implies that if
$c>a+B/4$ then  
the angle at $\gamma(a+B/4)$ of the quadrangle
with vertices $x,\gamma(a),\gamma(a+B/4),y$ is 
not smaller than $\pi/2$. Thus by the assumption in the lemma,
the geodesic connecting $x$ to $y$ passes through a point $z$ 
in the $B/4$-neighborhood of $\gamma[a,a+B/4]$. 
Then $\pi_{\gamma(J)}(z)\in \gamma[a-B/4,a+B/2]$, 
moreover also $d(x,z)\geq R-B/4$. 

Now assume that $c\geq a+B$. Then
we have $d(\pi_{\gamma(J)}(z),\pi_{\gamma(J)}(y))\geq  B/2$. Since
the projection $\pi_{\gamma(J)}$ is distance
non-increasing 
we conclude that 
\[d(x,y)=
d(x,z)+d(z,y)\geq R-B/4+B/2>R.\]
In other words, $\gamma(J)$ is $B$-contracting.
\end{proof}

\section{${\rm CAT}(0)$-geometry of Weil-Petersson space}

Let $S$ be an oriented surface of genus $g\geq 0$ with $m\geq 0$
punctures and $3g-3+m\geq 2$. 
The metric completion $\overline{{\cal T}(S)}$ of the Teichm\"uller space
${\cal T}(S)$ of $S$ 
with respect to the Weil-Petersson metric 
$d_{WP}$ is a Hadamard space. The completion locus
$\overline{{\cal T}(S)}-{\cal T}(S)$ of ${\cal T}(S)$ 
can be described as follows \cite{M76}.

A \emph{surface with nodes} is defined by a degenerate
hyperbolic metric on $S$ where at least one essential simple closed
curve on $S$ (i.e. a curve which is homotopically nontrivial
and not freely homotopic into a puncture)
has been pinched to a pair of punctures. For the
free homotopy class of an essential simple closed curve $c$ on $S$, the
degenerate surfaces with a single node at $c$ define a \emph{stratum}
${\cal T}(S)_c$ in the completion locus $\overline{{\cal T}(S)}-
{\cal T}(S)$ of Teichm\"uller space. This stratum equipped with the
induced metric is isometric to the Teichm\"uller space 
equipped with the Weil-Petersson metric 
of the (possibly disconnected) surface obtained from
$S-c$ by replacing each of the two ends corresponding to $c$ 
by a cusp. If $S_c$ is disconnected then a point in the
Teichm\"uller space of $S-c$ is given by a pair of points,
one for each of the two components of $S-c$.  
The stratum ${\cal T}(S)_c$ is a convex subset of 
$\overline{{\cal T}(S)}$. The completion
locus $\overline{{\cal T}(S)}-{\cal T}(S)$ then is the
union of the completions of the strata ${\cal T}(S)_c$ where
$c$ runs through all free homotopy classes of simple closed
curves and with the obvious identifications.

The extended mapping class group ${\rm Mod}(S)$ of all isotopy
classes of diffeomorphisms of $S$ acts on $({\cal T}(S),d_{WP})$ 
properly discontinuously as a group of isometries. This action extends
to an action on the completion 
$\overline{{\cal T}(S)}$
preserving the completion
locus $\overline{{\cal T}(S)}-{\cal T}(S)$.

Masur and Wolf \cite{MW02} used the action of the 
extended mapping class group on the completion locus
of Teichm\"uller space to show.

\begin{proposition}\label{isometrygroup}
The isometry group of $({\cal T}(S),d_{WP})$ coincides with
the extended mapping class group.
\end{proposition}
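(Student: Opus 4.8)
The plan is to prove the nontrivial inclusion: since ${\rm Mod}(S)$ already acts by isometries, it suffices to show that every isometry $\phi$ of $({\cal T}(S),d_{WP})$ is realized by an extended mapping class. First I would pass to the completion. Any surjective isometry $\phi$ extends uniquely to an isometry $\bar\phi$ of the Hadamard space $\overline{{\cal T}(S)}$, and because $\phi$ maps ${\cal T}(S)$ onto itself the extension $\bar\phi$ preserves both ${\cal T}(S)$ and, automatically, the completion locus $\overline{{\cal T}(S)}-{\cal T}(S)$. The key structural point is that $\bar\phi$ must respect the entire stratification of the completion locus into the strata ${\cal T}(S)_\sigma$ indexed by multicurves $\sigma$. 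This is where the local Weil--Petersson geometry enters: by Wolpert's asymptotic expansion of $d_{WP}$ near a stratum, the metric has a transverse cone-like singularity of real codimension $2$ along each one-curve stratum ${\cal T}(S)_c$, and codimension $2k$ along the stratum of a multicurve with $k$ components. Since ${\cal T}(S)$ is exactly the smooth (top-dimensional) locus and the stratum ${\cal T}(S)_\sigma$ is characterized metrically as the set of points at which exactly $|\sigma|$ independent pinching directions degenerate, the isometry $\bar\phi$ must carry strata to strata of the same codimension.

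Next I would recover the combinatorics. Distinct strata correspond bijectively to distinct multicurves, i.e.\ to simplices of the curve complex ${\cal C}(S)$, and the closure relation among strata records exactly the face relation: ${\cal T}(S)_{\sigma'}\subset\overline{{\cal T}(S)_\sigma}$ precisely when $\sigma\subseteq\sigma'$, since ${\cal T}(S)_\sigma$ is a convex subset whose own completion locus supplies the higher-codimension strata in its boundary. Because $\bar\phi$ preserves codimension and closure relations, it induces a simplicial automorphism of ${\cal C}(S)$. By the theorem of Ivanov (together with the work of Korkmaz and Luo covering the low-complexity cases, all of which satisfy $3g-3+m\geq 2$ with the usual finite list of exceptions treated separately), every simplicial automorphism of ${\cal C}(S)$ is induced by an element of ${\rm Mod}(S)$. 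Hence there is $g\in {\rm Mod}(S)$ whose action on the strata, equivalently on ${\cal C}(S)$, agrees with that of $\bar\phi$.

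It then remains to analyze $\psi:=g^{-1}\circ\bar\phi$, which is an isometry of $\overline{{\cal T}(S)}$ fixing every stratum \emph{setwise}. I would proceed by induction on the complexity $3g-3+m$. The maximal multicurves are pants decompositions, whose strata ${\cal T}(S)_P$ are single points (real codimension $2(3g-3+m)$), so $\psi$ fixes all of these points; this furnishes the base of the induction. For the inductive step, note that $\psi$ restricts to each one-curve stratum ${\cal T}(S)_c$, which is itself isometric to the Weil--Petersson completion of the lower-complexity surface $S-c$, as an isometry acting trivially on the corresponding curve complex; by the inductive hypothesis this restriction is the identity, so $\psi$ fixes the entire completion locus pointwise.

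The main obstacle is the final upgrade from ``$\psi$ fixes the completion locus pointwise'' to ``$\psi={\rm id}$ on ${\cal T}(S)$''. A priori $\psi$ could still act by a nontrivial rotation in the two-dimensional normal slice transverse to a generic point $p\in {\cal T}(S)_c$, where $S^1$-many geodesics issue into ${\cal T}(S)$ as the node at $c$ is opened. To rule this out I would exploit the $\psi$-equivariance of the shortest-distance projections $\pi_{{\cal T}(S)_\sigma}$ onto the convex strata, which are available from Section 2, together with Wolpert's precise normal expansion, whose leading behaviour is \emph{not} rotationally symmetric and is moreover coupled across the several transverse cones that meet at a fully pinched point. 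Concretely, at a pants-decomposition point the $3g-3+m$ codimension-two strata intersect in a combinatorially rigid configuration recording the local curve-complex data, all of which is fixed by $\psi$; this arrangement pins down the normal directions and forbids any residual rotation. Establishing this local metric rigidity is the delicate heart of the argument, and once it is in place one concludes $\psi={\rm id}$, hence $\bar\phi=g$ and $\phi$ is an extended mapping class.
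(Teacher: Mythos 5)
The paper does not prove this proposition at all: it is quoted as a theorem of Masur and Wolf, with \cite{MW02} as the reference, so the only meaningful comparison is with their cited argument. Your outline follows that argument essentially step for step: extend the isometry to the Hadamard completion, show it permutes the strata of the completion locus compatibly with the closure (face) relations, extract a simplicial automorphism of the curve complex, realize it by a mapping class via Ivanov--Korkmaz--Luo, and finally show that the residual isometry, which acts trivially on the stratification, is the identity. Two caveats. First, the closing rigidity step --- which you correctly single out as the delicate heart and leave as a plan rather than a proof --- is precisely where the substance of \cite{MW02} lies; your specific suggestion of ruling out normal rotations via asymmetry of Wolpert's expansion is not how Masur and Wolf argue and would itself need real work, so what you have is a faithful road map rather than a complete proof. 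Second, your aside that the exceptional surfaces can be ``treated separately'' conceals a genuine counterexample rather than a routine case: for the twice-punctured torus $S_{1,2}$, which has $3g-3+m=2$ and so falls under the standing hypothesis, Luo showed that ${\rm Aut}({\cal C}(S_{1,2}))$ is strictly larger than the image of ${\rm Mod}(S_{1,2})$, and correspondingly the Weil--Petersson isometry group of ${\cal T}(S_{1,2})$ is isomorphic to ${\rm Mod}(S_{0,5})$ and strictly contains the image of ${\rm Mod}(S_{1,2})$; Masur and Wolf exclude this surface (and for the closed genus $2$ surface the hyperelliptic involution acts trivially, so ``coincides'' must be read modulo that kernel). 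The proposition as stated, and hence any purported proof of it, inherits these exceptions --- a caveat that the paper's bare citation also elides.
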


For $\epsilon >0$ let ${\cal T}(S)_\epsilon$ be the
subset of ${\cal T}(S)$ of all hyperbolic metrics
whose \emph{systole}, i.e. the length of a shortest
closed geodesic, is at least $\epsilon$.
The mapping class group preserves
${\cal T}(S)_\epsilon$ and acts on it properly
discontinuously and cocompactly.
In particular, the Weil-Petersson distance between
${\cal T}(S)_\epsilon$ and the completion
locus $\overline{{\cal T}(S)}-{\cal T}(S)$ of 
Teichm\"uller space is positive. Moreover, the sectional
curvature of the restriction of the Weil-Petersson metric
to ${\cal T}(S)_\epsilon$ is bounded from above
by a negative constant.

This fact together with Lemma \ref{quadrangle} is 
used to show that Weil-Petersson
geodesic segments which are entirely contained in the thick
part of Teichm\"uller space are contracting.
An analogous result for the \emph{Teichm\"uller metric}
on Teichm\"uller space (which is however much more difficult)
was established by Minsky \cite{Mi96}.
As a convention,
in the sequel a Weil-Petersson geodesic is always
parametrized on a closed connected subset of $\mathbb{R}$.

\begin{lemma}\label{wpcontr}
For every $\epsilon >0$ there is a number $B=B(\epsilon)>0$
such that every geodesic 
$\gamma:J\to {\cal T}(S)_\epsilon$ is $B$-contracting.
\end{lemma}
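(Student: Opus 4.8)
The plan is to verify the hypothesis of Lemma~\ref{quadrangle} for geodesics contained in the thick part ${\cal T}(S)_\epsilon$. Since ${\cal T}(S)_\epsilon$ is at positive Weil-Petersson distance $\delta=\delta(\epsilon)>0$ from the completion locus, and since the sectional curvature of the Weil-Petersson metric restricted to ${\cal T}(S)_\epsilon$ is bounded above by a negative constant $-\kappa^2<0$, the idea is to exploit the fact that on the thick part we are genuinely in a negatively curved Riemannian setting, where triangle thinness is standard. The main subtlety is that a geodesic $\gamma:J\to{\cal T}(S)_\epsilon$ stays in the thick part, but a quadrangle $Q$ with $\gamma[a,b]$ as one side may have its other three sides wandering into the thin part or even into the completion locus, where we have no curvature upper bound; so the comparison must be set up carefully to remain valid globally.

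First I would fix $\epsilon$, let $\delta>0$ be the Weil-Petersson distance from ${\cal T}(S)_\epsilon$ to $\overline{{\cal T}(S)}-{\cal T}(S)$, and let $-\kappa^2$ be the curvature upper bound on ${\cal T}(S)_\epsilon$. I would then define $B=B(\epsilon)$ in terms of $\delta$ and $\kappa$ at the end, choosing it large enough that the estimates below close up. The key geometric input is that any geodesic segment of length at most some $\ell_0(\epsilon)$ that begins on ${\cal T}(S)_\epsilon$ cannot reach the completion locus, since it would have to travel distance at least $\delta$; more usefully, a geodesic triangle one of whose sides $\gamma[a,b]$ lies in ${\cal T}(S)_\epsilon$ and whose opposite vertex is \emph{close} to $\gamma[a,b]$ will have all its sides contained in the region where the curvature is at most $-\kappa^2$. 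In such a region, since $\overline{{\cal T}(S)}$ is globally ${\rm CAT}(0)$ and the relevant triangle is locally ${\rm CAT}(-\kappa^2)$, one gets the exponential thinness characteristic of strictly negative curvature.

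Concretely, for $[a,b]\subset J$ with $|b-a|\geq B/4$ and a quadrangle $Q$ with side $\gamma[a,b]$ and angles at least $\pi/2$ at $\gamma(a),\gamma(b)$, I would argue by contradiction: suppose the opposite side $\eta$ joining the other two vertices stays outside the $B/4$-neighborhood of $\gamma[a,b]$. Consider the geodesic from an interior point $\gamma(t)$ to the nearest point of $\eta$. Because the angles at $\gamma(a),\gamma(b)$ are at least $\pi/2$, convexity of the distance function forces the side $\eta$, together with the two transversal sides, to bound a region that contains a subsegment of $\gamma$ lying in the thick part at definite distance from $\eta$. On this region the ${\rm CAT}(-\kappa^2)$ comparison applies and yields that the distance from $\gamma(t)$ to $\eta$ is bounded above by a constant depending only on $\kappa$, independent of $|b-a|$; taking $B/4$ larger than this constant produces the contradiction. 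The point is that in ${\rm CAT}(-\kappa^2)$, a quadrangle with two right (or obtuse) angles on a long base has its opposite side uniformly close to the base, with a bound depending only on $\kappa$.

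The hard part will be the transition between the global ${\rm CAT}(0)$ geometry of $\overline{{\cal T}(S)}$ and the purely local ${\rm CAT}(-\kappa^2)$ estimate valid only in ${\cal T}(S)_\epsilon$: I must ensure that the comparison triangle/quadrangle I use actually lies in the thick part, or at least in a region where the negative curvature bound holds, rather than dipping into the thin part or the completion locus where no such bound is available. The mechanism for controlling this is the positive gap $\delta$ together with convexity of the metric, which confines the competing geodesics to a bounded neighborhood of $\gamma[a,b]$ precisely when they threaten to come close; once one fixes $B/4$ comparable to $\delta$ and to the ${\rm CAT}(-\kappa^2)$ thinness constant, the relevant portion of $Q$ is trapped in ${\cal T}(S)_\epsilon$ and the argument goes through. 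With the hypothesis of Lemma~\ref{quadrangle} thus verified, that lemma immediately gives that $\gamma(J)$ is $B$-contracting, completing the proof.
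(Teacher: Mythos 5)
Your reduction to Lemma \ref{quadrangle} is exactly the paper's route (the paper verifies the hypothesis of that lemma by citing Lemma 3.1 of \cite{H08e}), but your own verification of that hypothesis has a genuine gap. The fact you need --- in a ${\rm CAT}(-\kappa^2)$ space, a quadrangle with two angles at least $\pi/2$ on a long base has its opposite side passing uniformly close to the base --- is a \emph{global} comparison statement: it requires the quadrangle, together with all geodesics entering the comparison, to lie in a convex region on which the curvature bound holds. Here the bound $-\kappa^2$ is available only on ${\cal T}(S)_\epsilon$ (or on a slightly larger thick part containing a fixed-radius neighborhood of it), and this set is \emph{not} convex in $\overline{{\cal T}(S)}$. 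Under your contradiction hypothesis the far side $\eta$ lies entirely outside the $B/4$-neighborhood of $\gamma[a,b]$, so $\eta$, the two transversal sides, and the geodesics joining $\gamma(t)$ to $\eta$ all run through regions where no uniform negative upper curvature bound exists (Weil-Petersson sectional curvatures are not bounded away from $0$ in the thin part). Your proposed remedy --- that the gap $\delta$ and convexity of the distance function ``trap the relevant portion of $Q$ in ${\cal T}(S)_\epsilon$'' once $B/4$ is chosen comparable to $\delta$ --- is circular: the only part of the configuration guaranteed to stay where a curvature bound holds is a neighborhood of $\gamma[a,b]$ of some fixed radius $\rho(\epsilon)$, a constant that does not grow with $B$, whereas the contradiction hypothesis makes the rest of $Q$ far from $\gamma[a,b]$ and hence completely unconstrained. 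So the key quantitative step of your argument is never carried out in a setting where it is valid.

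The missing idea --- the one in Lemma 3.1 of \cite{H08e} and in \cite{BF08,BMM08}, and used twice in this very paper (proofs of Lemma \ref{fiber} and Proposition \ref{approximation}) --- is to span the quadrangle by a \emph{ruled surface} and integrate. The intrinsic Gau\ss{} curvature of a ruled surface is pointwise bounded above by the ambient sectional curvature; this is a pointwise statement needing no convexity at all. Hence the surface has curvature $\leq 0$ everywhere and $\leq -b(\epsilon)<0$ on its intersection with the fixed $\rho(\epsilon)$-neighborhood of $\gamma[a,b]$. If $\eta$ avoided the $B/4$-neighborhood of $\gamma[a,b]$, then (using the angle conditions at $\gamma(a),\gamma(b)$ and convexity of the distance function to see that intrinsic perpendiculars of length $\rho$ issuing from $\gamma[a,b]$ remain in the surface) the surface would contain a strip of width $\rho$ along $\gamma[a,b]$ whose area is at least comparable to $\rho\vert b-a\vert$ by Euclidean comparison, so the total curvature integral would be at most $-b\rho\vert b-a\vert$. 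But the Gau\ss{}-Bonnet theorem bounds this integral from below by $-2\pi$, a contradiction once $\vert b-a\vert\geq B/4$ with $B=B(\epsilon)$ large. This local-to-global integration is precisely what substitutes for the global ${\rm CAT}(-\kappa^2)$ comparison you tried to invoke; without it (or an equivalent, such as Jacobi field estimates inside the tube around $\gamma$), the proof does not close.
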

\begin{proof}
It was shown in Lemma 3.1 of \cite{H08e} (see also
\cite{BF08,BMM08} for an earlier argument along the same 
line) that for every $\epsilon >0$ there is a constant
$B=B(\epsilon)>0$ only depending on $\epsilon$ such that
every geodesic $\gamma:J\to {\cal T}(S)_\epsilon$ 
satisfies the hypothesis in Lemma \ref{quadrangle} for
$B$. \end{proof}

For an isometry $g$ of ${\cal T}(S)$ define the \emph{displacement function}
$d_g$ of $g$ to be the function $x\to d_g(x)=d(x,gx)$.

\begin{definition}\label{semisimple}
An isometry $g$ of $\overline{{\cal T}(S)}$ 
is called \emph{semisimple} if $d_g$ achieves
its minimum in $\overline{{\cal T}(S)}$. 
If $g$ is semisimple and ${\rm min}\,d_g=0$
then $g$ is called \emph{elliptic}. A semisimple isometry
$g$ with ${\rm min}\,d_g>0$ is called \emph{axial}.
\end{definition}

By the above definition, an isometry is elliptic if and only
if it fixes at least one point in $\overline{{\cal T}(S)}$. 
By Proposition 3.3 of \cite{B95}, an isometry $g$ of 
$\overline{{\cal T}(S)}$ is
axial if and only if there is a geodesic
$\gamma:\mathbb{R}\to \overline{{\cal T}(S)}$ such that
$g\gamma(t)=\gamma(t+\tau)$ for every $t\in \mathbb{R}$ where
$\tau=\min\,d_g>0$.
Such a geodesic is called an \emph{oriented axis} for $g$.
Note that the geodesic $t\to \gamma(-t)$ is an oriented 
axis for $g^{-1}$. The endpoint $\gamma(\infty)$ of $\gamma$
is a fixed point for the action of $g$ on 
$\partial\overline{{\cal T}(S)}$
which is called the \emph{attracting fixed point}. 
The closed convex set $A\subset \overline{{\cal T}(S)}$ 
of all points for which the displacement
function of $g$ is minimal is 
isometric to $C\times \mathbb{R}$ where
$C\subset A$ is closed and convex (Theorem II.2.14 of
\cite{BH99}). For each
$x\in C$ the set $\{x\}\times \mathbb{R}$ is an axis of $g$.

By the Nielsen-Thurston classification,
a mapping class $g\in {\rm Mod}(S)$ either is
\emph{pseudo-Anosov} or it is of finite order or it
is \emph{reducible}. An example of a reducible mapping class
is a \emph{multi-twist} which can
be represented in the form 
$\phi_1^{k_1}\circ\dots\circ \phi_\ell^{k_\ell}$ where
each $\phi_i$ is a Dehn-twist about a simple closed curve
$c_i$ in $S$ and where the curves $c_i$ are pairwise disjoint.
We allow the multi-twist to be trivial.
We have.

\begin{lemma}\label{semisimple2}
Every isometry $\phi$ of $\overline{{\cal T}(S)}$ is semi-simple, and 
$\phi$ is elliptic if and
only if there is some $k\geq 1$ such that
$\phi^k$ is a multi-twist.
\end{lemma}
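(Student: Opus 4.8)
The plan is to combine the identification of the isometry group of $\overline{{\cal T}(S)}$ with the extended mapping class group (Proposition \ref{isometrygroup}) with the Nielsen--Thurston classification, treating the three types of mapping classes separately and arguing by induction on the complexity $3g-3+m$, where the strata ${\cal T}(S)_\sigma$ provide the lower-complexity pieces. If $\phi$ has finite order, then $\langle\phi\rangle$ is a finite group of isometries of the Hadamard space $\overline{{\cal T}(S)}$; the orbit of any point is bounded, and its unique circumcenter is fixed by $\phi$, so $\phi$ is elliptic, and $\phi^k={\rm id}$ for suitable $k$, which we regard as the trivial multi-twist. If $\phi$ is pseudo-Anosov, then $\phi$ is axial by \cite{DW03}; since $\min d_\phi>0$ it is not elliptic, and no power of a pseudo-Anosov class is reducible, hence none is a multi-twist. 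Both cases are therefore consistent with the two assertions of the lemma.

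The substance lies in the reducible case. Here $\phi$ preserves an essential multicurve $\sigma$ (its canonical reduction system), and after replacing $\phi$ by a power I may assume $\phi$ fixes each component of $\sigma$ and preserves each component of $S-\sigma$, acting on each complementary piece as the identity or as a pseudo-Anosov map. Since $\sigma$ is $\phi$-invariant, the stratum ${\cal T}(S)_\sigma$ obtained by pinching all of $\sigma$ is a $\phi$-invariant closed convex subset of $\overline{{\cal T}(S)}$ isometric to the Weil--Petersson completion of $S-\sigma$, of strictly smaller complexity, so by the induction hypothesis $\phi$ restricted to this stratum is semisimple. If $\phi$ induces the trivial mapping class on $S-\sigma$, then it lies in the kernel of the cutting homomorphism ${\rm Stab}(\sigma)\to{\rm Mod}(S-\sigma)$, which is generated by the Dehn twists about the components of $\sigma$; thus $\phi$ is a multi-twist, fixes ${\cal T}(S)_\sigma$ pointwise (the Fenchel--Nielsen twist coordinate degenerates at a node), and is elliptic. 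Otherwise some piece carries a pseudo-Anosov first-return map, and the restricted isometry of the stratum is axial, with an axis $\gamma\subset{\cal T}(S)_\sigma$.

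The crux is to promote semisimplicity on the stratum to semisimplicity on all of $\overline{{\cal T}(S)}$, that is, to exclude the parabolic possibility that $\inf d_\phi$ is approached but not attained. I would show that $\inf_{\overline{{\cal T}(S)}}d_\phi$ equals the minimal displacement $\tau_\sigma$ of $\phi$ along the invariant stratum and is realised there by the axis $\gamma$. The idea is that near ${\cal T}(S)_\sigma$ the displacement $d_\phi(x)$ splits into a transverse contribution coming from the pseudo-Anosov (or elliptic) action on the complementary pieces, which is bounded below and attains $\tau_\sigma$ on the stratum, and a twisting contribution about the curves of $\sigma$, which tends to $0$ as these curves are pinched; hence $\gamma$ is an axis for $\phi$ in $\overline{{\cal T}(S)}$ and $\phi$ is axial. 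The quantitative estimate controlling the twisting contribution as one degenerates toward the stratum, and in particular the verification that the infimum is genuinely attained rather than merely approached, is the step I expect to be the main obstacle, since this is precisely the point where the completion is used to convert would-be parabolic behaviour into a fixed point or an honest axis.

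It remains to record the characterisation of elliptic isometries, which the argument above essentially furnishes. If $\phi$ is elliptic and fixes $x$, then either $x\in{\cal T}(S)$, in which case proper discontinuity of the ${\rm Mod}(S)$-action forces $\phi$ to have finite order and $\phi^k={\rm id}$ to be a trivial multi-twist, or $x$ lies in some stratum ${\cal T}(S)_\sigma$, in which case $\phi$ preserves $\sigma$ and induces a mapping class of $S-\sigma$ fixing a point of its Teichm\"uller space; a power of this induced class is trivial, so the corresponding power of $\phi$ lies in the kernel of the cutting homomorphism and is a multi-twist. Conversely, if $\phi^k$ is a multi-twist, then $\phi^k$ fixes pointwise the stratum obtained by pinching its twisting curves and is elliptic; the orbit $\{x,\phi x,\dots,\phi^{k-1}x\}$ of such a fixed point $x$ is finite, and $\phi$ fixes its circumcenter, so $\phi$ is elliptic as well. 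This establishes both statements of the lemma.
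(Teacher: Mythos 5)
Your treatment of the finite-order, pseudo-Anosov and multi-twist cases matches the paper's, but the reducible case contains a genuine gap, and it is exactly the step you yourself flag as ``the main obstacle'': you never prove that $\inf_{\overline{{\cal T}(S)}}d_\phi$ is actually attained, i.e.\ that semisimplicity on the invariant stratum promotes to semisimplicity on all of $\overline{{\cal T}(S)}$. The route you sketch for this---a quantitative decomposition of $d_\phi$ near the stratum into a transverse contribution and a twisting contribution that decays as the curves of $\sigma$ are pinched---is left as a heuristic; even if carried out, controlling the displacement near the stratum only shows that the infimum over $\overline{{\cal T}(S)}$ equals the infimum over the stratum, and by itself says nothing about attainment away from a compactness argument you do not have (the completion is not locally compact). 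So as written, the parabolic possibility is not excluded, and this is the heart of the lemma.

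The paper closes this gap with a one-line convexity argument that makes all quantitative estimates unnecessary: the completed stratum $\overline{{\cal T}(S)}_c$ is a closed convex subset of $\overline{{\cal T}(S)}$, so the nearest-point projection $\pi:\overline{{\cal T}(S)}\to\overline{{\cal T}(S)}_c$ exists and is distance non-increasing (Proposition II.2.4 of \cite{BH99}); since $\phi$ preserves $\overline{{\cal T}(S)}_c$ and $\pi$ is defined purely metrically, $\pi$ is $\phi$-equivariant, whence
\[
d_\phi(\pi x)=d(\pi x,\phi\pi x)=d(\pi x,\pi\phi x)\leq d(x,\phi x)=d_\phi(x)
\]
for every $x$. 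Thus the infimum of $d_\phi$ over the whole space equals its infimum over the stratum, where it is a minimum: the stratum is a metric product of the Teichm\"uller spaces of the pieces of $S-\sigma$, and $\phi$ acts on each factor either elliptically or with an axis supplied directly by \cite{DW03}, so the restriction of $\phi$ to the stratum is axial. Hence $\phi$ is axial in $\overline{{\cal T}(S)}$. Note also that this product argument makes your induction on complexity unnecessary (and your induction would in any case need base cases for complementary pieces of complexity $0$ and $1$, which the lemma as stated does not cover); the paper simply invokes the classification of \cite{DW03} on each complementary component. Your characterization of elliptic elements is essentially correct, but it too ultimately rests on the unproven attainment step, since you need to know that a reducible $\phi$ none of whose powers is a multi-twist has \emph{positive attained} minimal displacement in order to conclude it is not elliptic.
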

\begin{proof}
In a Hadamard space $X$, an isometry $g$ with a finite orbit 
on $X$ has a fixed point which is the center of the orbit.
This means the following.
For a fixed orbit $\{x_1,\dots,x_k\}\subset X$ for $g$ 
there is a unique point $y\in X$ such that 
the radius of the smallest 
closed metric ball centered at $y$ which contains
the set $\{x_1,\dots,x_k\}$ is minimal
(Proposition II.2.7 of \cite{BH99}).
Since this point is defined by purely metric properties,
it is a fixed point for $g$. 

As a consequence, 
an element $g\in {\rm Mod}(S)$ is elliptic if and only
if this holds true for $g^k$ for every $k>0$, and every
element of finite order is elliptic.

Now assume that
$g\in {\rm Mod}(S)$ is a multi-twist about 
a multi-curve $c=c_1\cup\dots\cup c_\ell$. Let 
$\overline{{\cal T}(S)}_c$ be the completion of the 
stratum in $\overline{{\cal T}(S)}$
of all surfaces with nodes at the curves $c_1,\dots,c_\ell$. 
Then $g$ fixes each point in $\overline{{\cal T}(S)}_c$ and
hence $g$ is elliptic.

If $g$ is pseudo-Anosov then it was shown 
in \cite{DW03} that $g$ has an 
axis in ${\cal T}(S)$ and hence it is
axial. Now assume that $g$ is reducible.
Then up to replacing $g$ by $g^k$ for some $k>0$,
$g$ preserves a non-trivial multi-curve $c$ 
component-wise, and it preserves each connected component
of $S-c$. Moreover, the multi-curve $c$ can be chosen in
such a way that for every component $S_0$ of $S-c$, either
$S_0$ is a three-holed sphere or the restriction of $g$ to $S_0$
is pseudo-Anosov.
If $g$ is not a multi-twist then 
there is at least one component $S_0$ of $S-c$ such that the restriction of
$g$ to $S_0$ is pseudo-Anosov. 
Then the restriction of $g$ to $S_0$ 
viewed as an element of the mapping class group
of $S_0$ has an axis in ${\cal T}(S_0)$.  
The Weil-Petersson metric on the stratum
${\cal T}(S)_c$ induced from the Weil-Petersson metric on $S$ 
is the product of the Weil-Petersson metrics
on the Teichm\"uller spaces of the connected components of $S-c$.
This implies that
the restriction of $g$ to ${\cal T}(S)_c$ has an axis.
Now the completion 
$\overline{{\cal T}(S)}_c\subset \overline{{\cal T}(S)}$ 
of ${\cal T}(S)_c$ is a closed convex subset of $\overline{{\cal T}(S)}$.
The shortest distance projection $\overline{{\cal T}(S)}\to
\overline{{\cal T}(S)}_c$ is distance non-increasing and
equivariant with respect to the action of $g$. 
Therefore the infimum of the displacement function $d_g$ of $g$
equals the infimum of $d_g$ on $\overline{{\cal T}(S)}_c$.
Thus this infimum is a minimum and 
once again, $g$ is axial.
\end{proof}

{\bf Remark:} 1) The flat strip theorem (Theorem II.2.14 of
\cite{BH99}) states that two geodesic lines in a Hadamard
space $X$ whose endpoints in the boundary $\partial X$ coincide
bound a flat strip. Since 
the sectional curvature of the Weil-Petersson metric
is negative, 
the proof of Lemma \ref{semisimple2} 
shows that an axial 
isometry of $\overline{{\cal T}(S)}$ which admits
an axis intersecting ${\cal T}(S)$ is pseudo-Anosov.

2) The celebrated solution of the Nielsen realization
problem states that each finite subgroup of 
${\rm Mod}(S)$ has a fixed point in ${\cal T}(S)$ \cite{K83}.
The discussion in the proof of Lemma \ref{semisimple2} 
immediately implies that such a group has a fixed point
in $\overline{{\cal T}(S)}$. It is not difficult to 
establish that there is also a fixed point in 
${\cal T}(S)$, however we omit this discussion here.

\bigskip

The following definition is due to Bestvina
and Fujiwara (Definition 5.1 of \cite{BF08}).

\begin{definition}\label{rankone}
An isometry $g$ of a ${\rm CAT}(0)$-space $X$ is called 
\emph{$B$-rank-one} for some $B>0$ 
if $g$ is axial and admits a $B$-contracting axis.
\end{definition}

We call an isometry $g$ \emph{rank-one} if $g$ is $B$-rank-one for some
$B>0$.
Since a pseudo-Anosov element has an axis $\gamma$ in ${\cal T}(S)$,
by invariance and cocompactness of the action of $g$ on 
$\gamma$, the geodesic $\gamma$ 
entirely remains in ${\cal T}(S)_\epsilon$ for some 
$\epsilon >0$. Thus the following result
(Proposition 8.1 of \cite{BF08}) is an immediate
consequence of Lemma \ref{wpcontr}.

\begin{proposition}\label{flatrank}
A pseudo-Anosov element in ${\rm Mod}(S)$ is rank-one.
\end{proposition}

{\bf Example:} An axial isometry $g$ of $\overline{{\cal T}(S)}$
which admits an axis $\gamma$ bounding a flat half-plane is
not rank-one. An example of such an axial isometry of
$\overline{{\cal T}(S)}$ can be obtained as follows. Let $c$ be a simple
closed separating curve on $S$ 
such that none of the two components 
$S_1,S_2$ of $S-c$ is a three-holed sphere. 
Let $g\in {\rm Mod}(S_1)$ be a pseudo-Anosov mapping class.
Then $g$ defines a reducible element in ${\rm Mod}(S)$.
If $\gamma_1$ is the axis for the action of $g$
on ${\cal T}(S_1)$ then for each point $z\in {\cal T}(S_2)$ the
curve $(\gamma_1,z)$ is an axis for the action of 
$g$ on the stratum ${\cal T}(S)_c\subset \overline{{\cal T}(S)}$ and hence
$(\gamma_1,z)$ is an axis for the action of $g$ on $\overline{{\cal T}(S)}$.
In particular, for every infinite geodesic $\zeta:\mathbb{R}\to
{\cal T}(S_2)$ the set $\{(\gamma(t),\zeta(s))\mid s,t\in \mathbb{R}\}  
\subset{\cal T}(S_1)\times {\cal T}(S_2)\subset 
\overline{{\cal T}(S)}$ is an isometrically embedded
euclidean plane in $\overline{{\cal T}(S)}$
containing an axis for $g$.
Thus $g$ is axial but not rank-one.

\bigskip

A homeomorphism $g$ of a topological space $K$ is said
to act with \emph{north-south dynamics} if there are
two fixed points $a\not=b\in K$ for the action of $g$ such that
for every neighborhood $U$ of $a$, $V$ of $b$ there is some 
$k>0$ such that $g^k(K-V)\subset U$ and 
$g^{-k}(K-U)\subset V$.
The point $a$ is called the \emph{attracting fixed point} for $g$,
and $b$ is the \emph{repelling fixed point}.

Teichm\"uller space equipped with the \emph{Teichm\"uller metric}
can be compactified by adding the \emph{Thurston boundary}  
${\cal P\cal M\cal L}$ 
of projective measured geodesic laminations
which is a topological sphere. This compactification however is
different from the ${\rm CAT}(0)$-boundary 
$\partial\overline{{\cal T}(S)}$ 
of $\overline{{\cal T}(S)}$.  
The action of the extended mapping class group on ${\cal T}(S)$ naturally 
extends to an action on ${\cal P\cal M\cal L}$.
An element $g\in {\rm Mod}(S)$ acts on 
${\cal P\cal M\cal L}$ with north-south-dynamics if and
only if $g$ is pseudo-Anosov. 
Lemma 3.3.3 of \cite{B95} shows that a rank-one isometry
of a proper Hadamard space $X$ acts on the boundary 
$\partial X$ with north-south dynamics. The proof of this fact
given in \cite{H08d} (proof of Lemma 4.4) does not
use the assumption of properness of $X$. Thus we obtain.

\begin{lemma}\label{northsouth}
A rank-one isometry $g$ of $\overline{{\cal T}(S)}$ 
acts with north-south dynamics on $\partial\overline{{\cal T}(S)}$.
\end{lemma}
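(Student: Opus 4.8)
The plan is to reduce the statement to an essentially known fact about rank-one isometries of Hadamard spaces. The key observation is that the excerpt itself tells us the proof strategy: Lemma 3.3.3 of \cite{B95} establishes north-south dynamics for rank-one isometries of \emph{proper} Hadamard spaces, and the excerpt remarks that the proof in \cite{H08d} does not require properness. So my proof would adapt that argument to our non-proper setting, leaning on the contracting property (Definition \ref{contracting}) as the replacement for compactness that the proper case would otherwise supply.

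\medskip

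First I would fix a $B$-contracting oriented axis $\gamma:\mathbb{R}\to\overline{{\cal T}(S)}$ for $g$ with translation length $\tau>0$, with attracting fixed point $a=\gamma(\infty)$ and repelling fixed point $b=\gamma(-\infty)$. The goal is: for any neighborhoods $U$ of $a$ and $V$ of $b$ in $\partial\overline{{\cal T}(S)}$, some power $g^k$ pushes the complement of $V$ into $U$ and, symmetrically for $g^{-1}$ (which is axial with the reversed axis, as the excerpt notes), pushes the complement of $U$ into $V$. Since the visual topology on $\partial\overline{{\cal T}(S)}$ is defined via geodesic rays from a fixed basepoint $o$ with uniform convergence on bounded sets, I would phrase everything in terms of such rays. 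Take a boundary point $\xi\neq b$, represented by the ray $\sigma_\xi$ from $o$, and consider the sequence of rays from $o$ to $g^k\xi$. The heart of the matter is to show these rays converge to $a$ uniformly over $\xi$ staying outside a neighborhood $V$ of $b$.

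\medskip

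The mechanism is the contracting property via Lemma \ref{thintriangle}. For a point $x$ far out along a ray toward $\xi$, its nearest-point projection $\pi_{\gamma(\mathbb{R})}(x)$ lands in some bounded portion of the axis \emph{provided} $\xi\neq b$; the constraint $\xi\notin V$ makes this projection point uniformly bounded away from $-\infty$ along $\gamma$. Applying $g^k$ shifts the axis by $k\tau$, so the projection of $g^k x$ moves far in the positive direction. Lemma \ref{thintriangle} then forces the geodesic from $o$ (or from $\gamma(a_0)$, the projection of $o$) to $g^k x$ to pass within $3B+1$ of the axis near $\gamma(k\tau + \text{const})$, hence to fellow-travel $\gamma$ out toward $a$ for a long initial segment whose length grows with $k$. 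Uniform convergence on bounded sets of the basepoint-rays to $a$ follows, uniformly over $\xi\notin V$; this is precisely $g^k(\overline{{\cal T}(S)}\cup\partial\overline{{\cal T}(S)} - V)\subset U$ for $k$ large. The symmetric statement for $g^{-1}$ gives the repelling behavior.

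\medskip

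\textbf{The main obstacle} is the failure of properness (local compactness) of $\overline{{\cal T}(S)}$, flagged repeatedly in the introduction. In the proper case one extracts convergent subsequences of the rays $o\to g^k\xi$ and argues by contradiction that the limit must be $a$; without compactness no subsequence need converge, so I cannot argue abstractly and must instead produce the quantitative fellow-traveling estimate \emph{directly} from the contracting constant $B$, as sketched above. The delicate point is securing \emph{uniformity in $\xi$}: I must show that requiring $\xi\notin V$ translates into a uniform lower bound on where projections onto $\gamma$ can land, so that a single $k$ works for all such $\xi$ simultaneously. This is exactly where Lemma \ref{thintriangle} does the work that compactness does in the classical proof, and verifying that the estimate is genuinely uniform—rather than depending on the individual point $\xi$—is the crux. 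Since the excerpt asserts the proof of \cite{H08d} already carries through without properness, I would ultimately cite that argument, but the content of the adaptation is precisely this replacement of a compactness extraction by the explicit contraction estimate.
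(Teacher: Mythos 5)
Your proposal takes essentially the same route as the paper: the paper's entire proof consists of citing Lemma 3.3.3 of \cite{B95} together with the observation that the proof of Lemma 4.4 in \cite{H08d} never uses properness of the space, which is exactly the reduction you make. Your additional sketch of how the $B$-contracting axis and Lemma \ref{thintriangle} supply the uniform fellow-traveling that compactness would otherwise provide is consistent with (and fills in, rather than departs from) the cited argument.
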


\section{Non-elementary groups of isometries}

In this section we investigate the action 
on $\partial\overline{{\cal T}(S)}$ of non-elementary 
subgroups of ${\rm Mod}(S)$ which contain a pseudo-Anosov element.
We begin with recalling some standard terminology used
for groups of isometries on Hadamard spaces.

Let $G<{\rm Mod}(S)$ be any subgroup.
The \emph{limit set} $\Lambda$ of $G$ is the set
of accumulation points in $\partial \overline{{\cal T}(S)}$ 
of one (and hence every)
orbit of the action of $G$ on $\overline{{\cal T}(S)}$. 
If $g\in G$ is axial with axis $\gamma$, then $\gamma(\infty),
\gamma(-\infty)\in \Lambda$. In other words, the two fixed
points for the action 
of a pseudo-Anosov element on 
$\partial \overline{{\cal T}(S)}$ are contained in $\Lambda$.

\begin{lemma}\label{limitsetmod}
The limit set of ${\rm Mod}(S)$ is the entire boundary
$\partial \overline{{\cal T}(S)}$ of 
$\overline{{\cal T}(S)}$.
\end{lemma}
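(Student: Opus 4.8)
The plan is to deduce the statement from the fact that moduli space has finite Weil--Petersson diameter, which makes every orbit of ${\rm Mod}(S)$ coarsely dense in $\overline{{\cal T}(S)}$; coarse density of an orbit then forces its limit set to be the entire boundary. First I would recall from Masur \cite{M76} that the quotient $\overline{{\cal T}(S)}/{\rm Mod}(S)$ is the compact Deligne--Mumford compactification of moduli space, so that its Weil--Petersson diameter is some finite number $D>0$. Translating this upstairs, for a fixed basepoint $x\in {\cal T}(S)$ the orbit ${\rm Mod}(S)\cdot x$ is $D$-dense in $\overline{{\cal T}(S)}$: for every $y\in \overline{{\cal T}(S)}$ the quotient distance between the classes of $y$ and $x$ is at most $D$, so there is some $g\in {\rm Mod}(S)$ with $d_{WP}(y,gx)\leq D$. (If one prefers to avoid quoting compactness of the completion, the same coarse density can be assembled from cocompactness of the action on each thick part ${\cal T}(S)_\epsilon$ together with the fact that surfaces of small systole lie Weil--Petersson close to the completion locus, but invoking \cite{M76} is cleaner.)

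Next, let $\xi\in\partial\overline{{\cal T}(S)}$ be arbitrary and let $\gamma:[0,\infty)\to\overline{{\cal T}(S)}$ be the geodesic ray issuing from $x$ with $\gamma(\infty)=\xi$. The points $\gamma(n)$ converge to $\xi$ in the cone topology on $\overline{{\cal T}(S)}\cup\partial\overline{{\cal T}(S)}$. Using coarse density I choose $g_n\in {\rm Mod}(S)$ with $d_{WP}(\gamma(n),g_nx)\leq D$. Then $d_{WP}(x,g_nx)\geq n-D\to\infty$, so the $g_n$ are eventually pairwise distinct, and I claim that $g_nx\to\xi$ as well; this shows $\xi$ is an accumulation point of the orbit, i.e. $\xi\in\Lambda$, and hence $\Lambda=\partial\overline{{\cal T}(S)}$.

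The only point that needs care is this last convergence, and it is where the ${\rm CAT}(0)$-geometry enters. Since $\overline{{\cal T}(S)}$ is a Hadamard space, the distance between two geodesics issuing from the common point $x$ is a convex function of the parameter which vanishes at $0$. Comparing the segment $[x,g_nx]$, traversed by the unit-speed geodesic $\sigma_n$, with $\gamma|_{[0,n]}$, whose far endpoints lie within $D$ of one another, this convexity (applied after rescaling both segments to $[0,1]$, together with the elementary bound $|n-d_{WP}(x,g_nx)|\leq D$ on the arc-length reparametrization) yields for each fixed $T$ a bound of the form $d_{WP}(\gamma(t),\sigma_n(t))\leq 2DT/n$ for all $t\in[0,T]$ and all large $n$. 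Thus $\sigma_n\to\gamma$ uniformly on compact sets, which is precisely convergence $g_nx\to\xi$ in the cone topology.

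I expect this comparison estimate to be the main, though essentially routine, technical step. I would emphasise that no local compactness of $\partial\overline{{\cal T}(S)}$ is used anywhere in the argument, so the conclusion is unaffected by the fact, noted earlier in the text, that the boundary of $\overline{{\cal T}(S)}$ fails to be locally compact; the entire proof rests only on coarse density of the orbit and on convexity of the metric in the Hadamard space $\overline{{\cal T}(S)}$.
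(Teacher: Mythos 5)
Your proof is correct and follows essentially the same route as the paper: the paper likewise deduces the lemma from coarse density of a ${\rm Mod}(S)$-orbit in $\overline{{\cal T}(S)}$, obtained there from cocompactness of the action on the thick part ${\cal T}(S)_\epsilon$ together with Wolpert's bound on the Weil--Petersson distance from any point of the completion to ${\cal T}(S)_\epsilon$ --- exactly the alternative you mention in parentheses. The only difference is one of detail: the paper compresses the final step into ``this just means that $\partial\overline{{\cal T}(S)}$ is the limit set,'' whereas you make explicit the ${\rm CAT}(0)$ convexity comparison showing that coarse density of the orbit forces every boundary point to be an accumulation point of the orbit.
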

\begin{proof} For suficiently
small $\epsilon >0$, the set ${\cal T}(S)_\epsilon\subset
{\cal T}(S)$ of all hyperbolic
metrics whose systole is at least $\epsilon$ is connected,
and the mapping class group ${\rm Mod}(S)$  
acts cocompactly
on ${\cal T}(S)_\epsilon$.
There is a number $R_0>0$ such that the
Weil-Petersson distance between
any point in $\overline{{\cal T}(S)}$ and 
${\cal T}(S)_\epsilon$ is at most $R_0$ \cite{W03}.
Thus there is a number
$R_1>R_0$ such that 
for all $x\in {\cal T}(S)_\epsilon$ and all $y\in\overline{{\cal T}(S)}$ 
there is some 
$g\in {\rm Mod}(S)$ with $d(gx,y)\leq R_1$. 
This just means
that $\partial \overline{{\cal T}(S)}$ is the limit set of ${\rm Mod}(S)$.
\end{proof}

\begin{lemma}\label{minimal}
Let $G<{\rm Mod}(S)$ be a subgroup 
which contains a pseudo-Anosov element $g$.
Then the limit set $\Lambda$ of 
$G$ is the closure in $\partial \overline{{\cal T}(S)}$ of the set of 
fixed points of conjugates of $g$ in $G$.  
If $G$ is non-elementary then 
$\Lambda$ does not have isolated points.
\end{lemma}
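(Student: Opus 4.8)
The plan is to fix a $B$-contracting axis $\gamma:\mathbb{R}\to{\cal T}(S)$ of the pseudo-Anosov element $g$ (which exists by Proposition \ref{flatrank}), with endpoints $g^+=\gamma(\infty)$, $g^-=\gamma(-\infty)$ and base point $x=\gamma(0)$. Write $F$ for the set of all fixed points $hg^+,hg^-$ of the conjugates $hgh^{-1}$ with $h\in G$. First I would record the easy inclusion: for $h\in G$ the element $hgh^{-1}$ is again pseudo-Anosov with axis $h\gamma$ and fixed points $hg^\pm$, so $hg^\pm\in\Lambda$; since $\Lambda$ is closed, $\overline F\subseteq\Lambda$. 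The real content is the reverse inclusion $\Lambda\subseteq\overline F$.

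For this I would take $\xi\in\Lambda$ and a sequence $h_n\in G$ with $h_n x\to\xi$ in $\overline{{\cal T}(S)}\cup\partial\overline{{\cal T}(S)}$, so in particular $d(x,h_nx)\to\infty$. The lines $\sigma_n:=h_n\gamma$ are $B$-contracting for the same $B$ (isometries preserve the contraction constant), they pass through $\sigma_n(0)=h_nx\to\xi$, and their endpoints $\sigma_n(\pm\infty)=h_ng^\pm$ lie in $F$. Thus it suffices to prove the geometric claim: \emph{if $\sigma_n$ are $B$-contracting lines with $\sigma_n(0)\to\xi\in\partial\overline{{\cal T}(S)}$, then after passing to a subsequence one of the endpoint sequences $\sigma_n(+\infty)$, $\sigma_n(-\infty)$ converges to $\xi$.} Granting this, a subsequence of $\{h_ng^+\}\cup\{h_ng^-\}\subset F$ converges to $\xi$, whence $\xi\in\overline F$ and $\Lambda=\overline F$.

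The claim is where non-properness of $\overline{{\cal T}(S)}$ bites: one cannot extract a limiting geodesic by local compactness, so the contraction hypothesis must carry the argument through Lemma \ref{thintriangle}. I would fix a basepoint $o$, let $p_n=\pi_{\sigma_n}(o)=\sigma_n(s_n)$ be the projection of $o$ onto $\sigma_n$, and apply Lemma \ref{thintriangle} to conclude that each ray $[o,\sigma_n(\pm\infty))$ passes through the $(3B+1)$-neighborhood of $p_n$, and that the geodesic $[o,h_nx]$ passes within $3B+1$ of $p_n$. Then I would split into two cases. If $d(o,p_n)\to\infty$ along a subsequence, then since $p_n$ lies $(3B+1)$-close to $[o,h_nx]$ and $[o,h_nx]$ converges to the ray from $o$ to $\xi$, one gets $p_n\to\xi$; as the rays to $\sigma_n(\pm\infty)$ stay $(3B+1)$-close to $p_n$ at distance tending to infinity, both endpoints converge to $\xi$. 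If instead $d(o,p_n)$ stays bounded, then $|s_n|=d(\sigma_n(0),p_n)\ge d(o,h_nx)-d(o,p_n)\to\infty$; passing to a subsequence with, say, $s_n\to+\infty$, the point $h_nx=\sigma_n(0)$ lies far out on the ray from $p_n\approx o$ toward $\sigma_n(-\infty)$, so that ray runs from near $o$ through $h_nx\to\xi$ out to $\sigma_n(-\infty)$, forcing $\sigma_n(-\infty)\to\xi$; the case $s_n\to-\infty$ is symmetric. In either case an endpoint converges to $\xi$. Making the last step quantitative — deriving convergence of the endpoint from ``the ray runs through $h_nx$'' via convexity of the distance and the contraction estimate rather than via compactness — is the main technical obstacle.

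Finally I would deduce that $\Lambda$ has no isolated points when $G$ is non-elementary. Since $F$ is dense in $\Lambda=\overline F$, any isolated point $\xi\in\Lambda$ must already lie in $F$, so $\xi$ is an endpoint of the axis of some pseudo-Anosov conjugate $g'=hgh^{-1}$, say $\xi=(g')^+$. As $g'$ is rank-one, it acts on $\partial\overline{{\cal T}(S)}$ with north-south dynamics by Lemma \ref{northsouth}. Because $G$ is non-elementary, $\Lambda$ contains a point $\eta\notin\{(g')^+,(g')^-\}$; then the points $(g')^n\eta$ lie in $\Lambda$ (which is $G$-invariant), are all distinct from $\xi$, and converge to $(g')^+=\xi$. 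This contradicts the assumption that $\xi$ is isolated and completes the proof.
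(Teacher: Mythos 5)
Your proposal follows the paper's proof almost step for step: the same reduction (it suffices to show that an endpoint of the translated axes $\sigma_n=h_n\gamma$ converges to $\xi$), the same dichotomy according to whether the projections $p_n=\pi_{\sigma_n}(o)$ leave every bounded set or stay in a bounded set, the same use of Lemma \ref{thintriangle} together with the uniform contraction constant $B$ in the unbounded case, and the same use of north-south dynamics (Lemma \ref{northsouth}) to rule out isolated points. The unbounded case and the isolated-points argument are correct as you wrote them, and your streamlining of the latter (an isolated point must lie in $F$ by density, then push a third limit point onto it with the conjugate) is fine.

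The one genuine gap is exactly the step you flag in the bounded case: you assert that because the ray from $p_n$ (which stays near $o$) toward $\sigma_n(-\infty)$ passes through $h_nx\to\xi$, the endpoint $\sigma_n(-\infty)$ converges to $\xi$, but you give no argument, and since $\overline{{\cal T}(S)}$ is not proper no compactness extraction is available. This is precisely where the paper's proof does its work, and it is closed by a pure ${\rm CAT}(0)$ comparison argument (no contraction is needed in this case): since $d(o,p_n)$ is bounded while $d(o,h_nx)\to\infty$, the comparison angle, and hence the Alexandrov angle, at $h_nx$ between the geodesic back to $o$ and the direction along $\sigma_n$ toward $p_n$ tends to $0$; therefore the angle at $h_nx$ of the ideal triangle with vertices $o$, $h_nx$, $\sigma_n(-\infty)$ tends to $\pi$, and since the sum of Alexandrov angles of a geodesic triangle with possibly one ideal vertex is at most $\pi$, the angle at $o$ between $[o,h_nx]$ and the ray from $o$ to $\sigma_n(-\infty)$ tends to $0$, which yields $\sigma_n(-\infty)\to\xi$. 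Alternatively, the convexity route you anticipated does work, in two steps: comparing $[o,\sigma_n(u)]$ for $u\to-\infty$ with the subsegment of $\sigma_n$ from $p_n$ to $\sigma_n(u)$ (same endpoint, initial points at bounded distance $C$), convexity of the distance function shows $[o,\sigma_n(u)]$, and in the limit the ray $[o,\sigma_n(-\infty))$, passes within $C$ of $h_nx$; then comparing that ray with $[o,h_nx]$, whose lengths agree up to $C$, convexity gives locally uniform convergence of these rays to the ray $[o,\xi)$, i.e. convergence in the cone topology. Either completion turns your outline into the paper's proof.
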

\begin{proof}
Let $G<{\rm Mod}(S)$ be a subgroup which 
contains a pseudo-Anosov element $g\in G$.
Let $\Lambda$ be the limit set of $G$.
We claim that $\Lambda$ is contained in the
closure of the $G$-orbit of the two fixed 
points of $g$. For this let $\gamma$ be the axis
of $g$. By Proposition \ref{flatrank}, $\gamma$
is $B$-contracting for some $B>0$. Let $\xi\in \Lambda$ and let  
$(g_i)\subset G$ 
be a sequence such that $(g_i\gamma(0))$ converges to 
$\xi$. There are two cases possible.

In the first case, up to passing to a 
subsequence, the geodesics $g_i\gamma$ eventually leave
every bounded set. Let $x_0=\gamma(0)$ and for $i\geq 1$ let 
$x_i=\pi_{g_i\gamma(\mathbb{R})}(\gamma(0))$. 
Then $d(x_0,x_i)\to \infty$ $(i\to \infty)$. 
On the other hand, $g_i\gamma$ is $B$-contracting and hence 
by Lemma \ref{thintriangle} 
a geodesic $\zeta_i$ connecting $x_0$ to $g_ix_0$ passes through
the $3B+1$-neighborhood of $x_i$, and the same
is true for a geodesic $\eta_i$ connecting 
$x_0$ to $g_i\gamma(\infty)$.
By ${\rm CAT}(0)$-comparison, the angles at $x_0$
between the geodesics $\zeta_i,\eta_i$ 
converge to zero as $i\to \infty$. Since 
$g_ix_0\to \xi$, 
the sequence $(g_i\gamma(\infty))$ 
converges to $\xi$ as well.
But $g_i\gamma(\infty)$
is a fixed point of the conjugate $g_igg_i^{-1}$ of $g$. 
Thus $\xi$ is contained in the closure of the
fixed points of all conjugates of $g$.

In the second case there is a bounded neighborhood $K$ of $x_0$ in  
$\overline{{\cal T}(S)}$
such that $g_i\gamma\cap K\not=\emptyset$ for all $i$.
For $i>0$ let $\zeta_i$ be the geodesic connecting 
$x_0$ to $g_ix_0$. 
Since $g_ix_0\to \xi$, 
the geodesics $\zeta_i$ converge as 
$i\to \infty$ locally uniformly to the geodesic ray connecting $x_0$ to $\xi$.

After passing to a subsequence and perhaps a change
of orientation of $\gamma$ we may assume that
for large $i$ the point $g_ix_0$ lies between a point
$z_i\in g_i\gamma\cap K$ and $g_i\gamma(\infty)$. This means that
$g_ix_0$ is contained in the geodesic connecting $z_i$ to 
$g_i\gamma(\infty)$. Since the distance between 
$z_i$ and $x_0$ is uniformly bounded, 
by ${\rm CAT}(0)$-comparison the 
Alexandrov angle at
$g_ix_0$ between the inverse of the
geodesic $\zeta_i$ (which connects $g_ix_0$ to $x_0$) and
the inverse of the geodesic $g_i\gamma$ (which connects
$g_ix_0$ to $z_i$) tends to zero as $i\to \infty$.
This implies that the angle at $g_ix_0$ of 
the ideal triangle in $\overline{{\cal T}(S)}$ with
vertices $x_0,g_ix_0,g_i\gamma(\infty)$ tends to $\pi$ as
$i\to \infty$.

Since in a ${\rm CAT}(0)$-space the sum of the Alexandrov
angles of a geodesic triangle (with possibly one
vertex at infinity) does not exceed $\pi$, 
the angle at $x_0$ between the geodesic 
$\zeta_i$ and the geodesic $\rho_i$ connecting
$x_0$ to $g_i\gamma(\infty)$ tends to zero as $i\to \infty$.
But $g_ix_0\to \xi$ and therefore
the points $g_i\gamma(\infty)$ converge to $\xi$
$(i\to \infty)$ in $\partial\overline{{\cal T}(S)}$.
Thus $\xi$ is indeed contained in the closure of the 
fixed points of conjugates of $g$.  

Now assume that 
the limit set $\Lambda$ of $G$ contains at least 3 points. 
Let $g$ be any pseudo-Anosov element of $G$. 
Since by Lemma \ref{northsouth}
$g$ acts with north-south dynamics on 
$\partial \overline{{\cal T}(S)}$, 
the set $\Lambda$ 
contains at least one point $\xi$ which is not a fixed point
of $g$. The sequence $(g^k\xi)$ consists of pairwise
distinct points which converge as $k\to\infty$ 
to the attracting fixed point of $g$. Similarly,
the sequence $(g^{-k}\xi)$ consists of pairwise distinct points which
converge as $k\to\infty$ to the repelling fixed point of $g$. 
Moreover, by the above, a point $\xi\in \Lambda$ which is
not a fixed point of a pseudo-Anosov element of $G$ is a limit
of fixed points of pseudo-Anosov elements. This shows that
$\Lambda$ does not have
isolated points and completes the proof of the lemma.
\end{proof}

We need the following simple (and well known to the experts)
observation which parallels
the properties of the action of 
${\rm Mod}(S)$ on the space of projective measured
geodesic laminations. This observation follows
immediately from the work of Brock, Masur and Minsky
\cite{BMM08}.

\begin{lemma}\label{pseudofix}
Let $g,h\in {\rm Mod}(S)$ be pseudo-Anosov elements.
If there is a common fixed point for
the action of $g,h$ on $\partial\overline{{\cal T}(S)}$ then
the fixed point sets of $g,h$ coincide.
\end{lemma}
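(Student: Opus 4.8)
The plan is to use that each pseudo-Anosov acts with north-south dynamics and then to show that sharing a single fixed point forces the two invariant directions to coincide. First I would record the basic dynamical picture. By Proposition \ref{flatrank} every pseudo-Anosov element is rank-one, and by Lemma \ref{northsouth} it acts on $\partial\overline{{\cal T}(S)}$ with north-south dynamics; hence $g$ has exactly two fixed points $g^+,g^-$ (attracting and repelling), which are the endpoints $\gamma_g(\pm\infty)$ of its axis $\gamma_g\subset {\cal T}(S)$, and likewise $h$ has fixed points $h^\pm=\gamma_h(\pm\infty)$. Since $g^{-1},h^{-1}$ are again pseudo-Anosov with the same fixed-point sets (only the roles of attracting and repelling being reversed), I may replace $g$ or $h$ by its inverse, and so assume without loss of generality that the common fixed point is attracting for both, $\xi:=g^+=h^+$. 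The claim is then equivalent to $g^-=h^-$, and I would argue by contradiction, assuming $g^-\neq h^-$ so that $\xi,g^-,h^-$ are three distinct boundary points.

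The main line of attack is a conjugation argument exploiting that the axes are uniformly contracting. Consider the conjugates $g_n=h^ngh^{-n}$. Each $g_n$ is pseudo-Anosov with the same translation length $\tau>0$ as $g$ (after replacing $g$ by a fixed power so that $\tau$ exceeds the contraction constant $B$ of Lemma \ref{wpcontr}, which changes neither fixed-point set); its axis is $\alpha_n=h^n\gamma_g$, which is $B$-contracting because $h^n$ is an isometry, and its fixed points are the attracting point $h^n(\xi)=\xi$ and the repelling point $\zeta_n:=h^n(g^-)$. Since $g^-\neq h^-$, north-south dynamics of $h$ gives $\zeta_n\to\xi$, so both endpoints of the contracting axis $\alpha_n$ converge to the single point $\xi$ while its translation length stays equal to $\tau$. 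Fixing a basepoint $x_0$ and using equivariance of the projection, one has $\pi_{\alpha_n}(x_0)=h^n\pi_{\gamma_g}(h^{-n}x_0)$; since $h^{-n}x_0\to h^-\notin\{g^+,g^-\}$, its projection to the contracting axis $\gamma_g$ stays bounded, whence $q_n:=\pi_{\alpha_n}(x_0)\to\xi$ and $R_n:=d(x_0,q_n)\to\infty$. I would then feed this into Lemma \ref{thintriangle}: applied to $\alpha_n$ with $q_n=\pi_{\alpha_n}(x_0)$, both the ray $[x_0,\xi)$ and the ray $[x_0,\zeta_n)$ pass within $3B+1$ of $q_n$, and they do so at parameter $\approx R_n$ from $x_0$. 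The target is the absurd conclusion that the bi-infinite geodesic $\alpha_n$ has its two ends fellow-travelling a single ray to $\xi$, which no geodesic can do.

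The hard part will be exactly this last step in the non-locally-compact setting. Turning $\zeta_n\to\xi$ into genuine fellow-travelling of the rays $[x_0,\zeta_n)$ and $[x_0,\xi)$ out to the \emph{diverging} parameter $R_n$ is automatic in a proper space by the Morse lemma, but in $\overline{{\cal T}(S)}$ one only gets uniform convergence of rays on bounded sets, so I would have to quantify how far two rays to $\xi$ and to $\zeta_n$ stay close in terms of $B$ and compare that scale with $R_n$; this is where the contracting hypothesis must be used decisively rather than mere asymptoticity. A cleaner alternative, and the route suggested by the statement, is to bypass the geometry and import the lamination picture of Brock--Masur--Minsky \cite{BMM08}: the endpoint of a pseudo-Anosov axis in $\partial\overline{{\cal T}(S)}$ determines, and is determined by, one of its two invariant laminations, so a common boundary fixed point of $g$ and $h$ yields a common invariant lamination; since the stabilizer in ${\rm Mod}(S)$ of the invariant filling lamination of a pseudo-Anosov is virtually infinite cyclic, $g$ and $h$ share a common power and hence the same pair of endpoints. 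Under this route the delicate point to verify is the faithfulness of the correspondence between boundary fixed points and invariant laminations, which is the nontrivial input drawn from \cite{BMM08} and is subtle precisely because $\partial\overline{{\cal T}(S)}$ is not understood in high complexity.
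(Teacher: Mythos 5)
Your main line of attack (the conjugation argument) has a genuine gap, and it is not a gap that can be filled by quantifying the contraction more carefully: the configuration you are trying to contradict actually exists. Everything you invoke --- north--south dynamics, uniform $B$-contraction of axes, equivariance of projections, $\zeta_n\to\xi$, $R_n\to\infty$ --- is available in $X=\mathbb{H}^2$, where every geodesic is $B$-contracting for a universal $B$. Take $g(z)=2z$ and $h(z)=2z+1$ in the upper half-plane: both are axial with attracting fixed point $\infty$ and repelling fixed points $0\neq -1$, so they share exactly one boundary fixed point. The conjugated axes $\alpha_n=h^n\gamma_g$ are the vertical lines ${\rm Re}(z)=2^n-1$, with endpoints $\infty$ and $h^n(0)=2^n-1\to\infty$ in $\partial\mathbb{H}^2$; their translation lengths are constant, $d(x_0,\alpha_n)\to\infty$, and both the ray from $x_0$ to $\infty$ and the ray from $x_0$ to $2^n-1$ pass within a universal distance of $\pi_{\alpha_n}(x_0)$ --- yet nothing is absurd: the rays to $\zeta_n$ converge to the ray to $\xi$ only locally uniformly and peel away from it precisely at the diverging scale $R_n$, exactly the loophole you identified. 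So the statement you hope to contradict is false in ${\rm CAT}(-1)$ geometry, and no purely geometric argument of this kind can prove the lemma. What makes the lemma true is \emph{discreteness}: ${\rm Mod}(S)$ acts properly discontinuously on $\mathcal{T}(S)$, an input your main argument never uses (in the example, $g^{-1}h(z)=z+\tfrac12$ is parabolic and $\langle g,h\rangle$ is not discrete).

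The paper's proof supplies exactly the two missing ingredients. First, Theorem 1.5 of \cite{BMM08}: since the axes $\gamma$ of $g$ and $\eta$ of $h$ share an endpoint, after reparametrization $d_{WP}(\gamma(t),\eta(t))\to 0$, so $\eta[0,\infty)$ lies in the $r$-neighborhood of $\gamma[0,\infty)$ for $r$ as small as one likes --- in particular small enough that the $2r$-neighborhood of $\gamma$ stays in a region on which the semigroup $\{g^k\mid k\geq 0\}$ acts cocompactly. Second, a pigeonhole argument against proper discontinuity: if no relation $g^k=h^\ell$ holds, then choosing for each $n$ the integer $m$ with $\pi_{\gamma(\mathbb{R})}h^n(\eta(0))\in\gamma[m\tau_0,(m+1)\tau_0)$ produces infinitely many distinct elements $g^{-m}h^n$ mapping $\eta(0)$ into the $r$-neighborhood of the compact arc $\gamma[0,\tau_0]$, which is impossible; hence $g^k=h^\ell$ and the fixed point sets coincide. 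Your alternative route via ending laminations is sound in spirit --- it also rests on \cite{BMM08}, and it does import discreteness, hidden inside McCarthy's theorem \cite{MC85} on stabilizers of pseudo-Anosov laminations --- but as written it defers its pivotal step (that a common ${\rm CAT}(0)$-boundary fixed point forces a common invariant lamination) to an unverified appeal to \cite{BMM08}, so it is a plausible sketch rather than a proof. The cleanest repair of your write-up is to replace the conjugation argument by the asymptoticity-plus-pigeonhole argument above, which needs nothing beyond Theorem 1.5 of \cite{BMM08} and proper discontinuity.
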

\begin{proof}
Let $g,h\in {\rm Mod}(S)$ be pseudo-Anosov elements
and assume that there is a common fixed point
for the action of $g,h$ on $\partial \overline{{\cal T}(S)}$.
Since $g,h$ act with
north-south dynamics on $\partial\overline{{\cal T}(S)}$,
this implies that the axis $\gamma$ for $g$ and
the axis $\eta$ for $h$ have a common endpoint, 
say $\gamma(\infty)=\eta(\infty)$. We may also
assume that $\gamma(\infty)$ is the attracting fixed
point for both $g,h$.

By Theorem 1.5 of \cite{BMM08}, up to a reparametrization
we have $d_{WP}(\gamma(t),\eta(t))\to 0$
$(t\to \infty)$. 
After another reparametrization, there
is a number $r>0$ such that the semi-group 
$\{g^k\mid k\geq 0\}$ acts cocompactly on 
the closed $2r$-neighborhood $N\subset {\cal T}(S)$ of 
$\gamma[0,\infty)$, and that the $r$-neighborhood of 
$\gamma[0,\infty)$ contains $\eta[0,\infty)$. 

Let $\tau_0$ be the translation length of 
$g$ on $\gamma$.
If there are no integers $k,\ell >0$ such that $g^k=h^\ell$
then there are infinitely many 
distinct elements of ${\rm Mod}(S)$
of the form $g^{-m}h^{n}$ 
which map $\eta(0)$ into the $r$-neighborhood 
of $\gamma[0,\tau_0]$. Namely, let $n>0$ be arbitrary.
Then there is a unique number $m\in \mathbb{Z}$ such that
$\pi_{\gamma(\mathbb{R})}h^n(\eta(0))\in 
\gamma[m\tau_0,(m+1)\tau_0)$. Since the point $h^n(\eta(0))$ is contained
in the $r$-neighborhood of $\gamma(\mathbb{R})$, the point 
$g^{-m}h^n(\eta(0))$ is contained in the $r$-neighborhood of
$\gamma[0,\tau_0]$. However, this 
violates the fact that ${\rm Mod}(S)$ acts 
properly discontinuously on ${\cal T}(S)$.
Thus there are number $k,\ell>0$ with $g^k=h^\ell$ and hence
the fixed point sets for the action of $g,h$ on 
$\partial\overline{{\cal T}(S)}$ coincide.
\end{proof}

The action of a group $G$ on a topological space $Y$ is
called \emph{minimal} if every $G$-orbit is dense.

\begin{lemma}\label{moveaway}
Let $G<{\rm Mod}(S)$ be a non-elementary group 
with limit set $\Lambda$ 
which contains
a pseudo-Anosov element $g\in G$ with fixed points $a\not=b\in \Lambda$.
Then for every non-empty open set $V\subset \Lambda$ 
there is some $u\in G$ with $u\{a,b\}\subset V$. 
Moreover, the action of $G$ on 
$\Lambda$ is minimal.
\end{lemma}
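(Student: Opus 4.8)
The plan is to treat the two assertions in turn, deriving the minimality statement from the displacement statement. Throughout I will use that $g$ is rank-one by Proposition \ref{flatrank}, so that $g$ and each of its conjugates in $G$ act on $\partial\overline{{\cal T}(S)}$ with north-south dynamics by Lemma \ref{northsouth}. I will also use that every $u\in G$ acts as a homeomorphism of $\Lambda$, since $\Lambda$ is $G$-invariant and the isometric action of $G$ extends continuously to the boundary.

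First I would establish that any non-empty open $V\subset\Lambda$ absorbs the \emph{pair} $\{a,b\}$. By Lemma \ref{minimal} the set $F$ of fixed points of conjugates of $g$ in $G$ is dense in $\Lambda$, and since $G$ is non-elementary $\Lambda$ has no isolated points; hence $V$ is infinite and so $V\cap F$ is infinite. I can therefore choose $r\in V\cap F$ with $r\neq a$ and $r\neq b$. This $r$ is a fixed point of some conjugate $h=wgw^{-1}$ with $w\in G$, and since $r\notin\{a,b\}$ the fixed point set of $h$ is not $\{a,b\}$; by Lemma \ref{pseudofix} it is therefore \emph{disjoint} from $\{a,b\}$. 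Replacing $h$ by $h^{-1}$ if necessary, I may assume $r$ is the attracting fixed point $p$ of $h$, with repelling fixed point $q\notin\{a,b\}$. I then apply the north-south dynamics of $h$: choosing a neighborhood $W$ of $q$ missing $a$ and $b$ and a neighborhood $U$ of $p=r$ contained in $V$, there is $k>0$ with $h^k(\Lambda\setminus W)\subset U\subset V$. As $a,b\in\Lambda\setminus W$ this yields $u=h^k\in G$ with $u\{a,b\}\subset V$.

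For minimality I must show that every orbit $G\eta$ is dense, i.e.\ meets every non-empty open $V\subset\Lambda$. Given $\eta\in\Lambda$ and such a $V$, the first step produces $u\in G$ with $ua,ub\in V$. If $\eta=b$ then $u\eta\in V$ and we are done; otherwise $\eta\neq b$, and $u^{-1}V$ is an open neighborhood of $a$ because $u$ is a homeomorphism of $\Lambda$. Choosing a neighborhood $W'$ of $b$ with $\eta\notin W'$ and applying the north-south dynamics of $g$ gives $k>0$ with $g^k(\Lambda\setminus W')\subset u^{-1}V$; since $\eta\in\Lambda\setminus W'$ this yields $ug^k\eta\in V$ with $ug^k\in G$. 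Thus $G\eta\cap V\neq\emptyset$, which proves minimality.

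The main obstacle is the first step, and precisely the need to move \emph{both} fixed points $a$ and $b$ simultaneously into $V$ rather than just one of them. This is where the dichotomy of Lemma \ref{pseudofix} is essential: it guarantees that a conjugate of $g$ having a fixed point in $V$ outside $\{a,b\}$ has its \emph{entire} fixed point set disjoint from $\{a,b\}$, so that neither $a$ nor $b$ coincides with its repelling point and the attracting north-south dynamics can sweep the whole pair $\{a,b\}$ into $V$ at once. The remaining points are bookkeeping: using that $\Lambda$ is perfect to locate a fixed point distinct from $a$ and $b$ inside $V$, and orienting the chosen conjugate so that the point lying in $V$ is its attracting fixed point.
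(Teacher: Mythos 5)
Your proof is correct. For the first assertion it is essentially the paper's own argument: Lemma \ref{minimal} supplies a fixed point $r$ of a conjugate of $g$ inside $V-\{a,b\}$ (the paper phrases this as finding $v\in G$ with $v(a)\in V-\{a,b\}$ after possibly replacing $g$ by $g^{-1}$), Lemma \ref{pseudofix} forces the second fixed point of that conjugate off $\{a,b\}$, and north-south dynamics (Lemma \ref{northsouth}) then sweeps the pair $\{a,b\}$ into $V$. The two proofs diverge only in the minimality step. The paper argues through closed invariant sets: any non-empty closed $G$-invariant set $A$ contains every fixed point of every pseudo-Anosov element of $G$ (using north-south dynamics, together with the first assertion applied to each such element in the case where $A$ meets its fixed pair), and the closure of these fixed points contains $\Lambda$ by Lemma \ref{minimal}, whence $A\supseteq\Lambda$. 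You instead chase orbits directly: given $\eta\in\Lambda$ and open $V$, you pull $V$ back by the element $u$ produced in the first part to get a neighborhood of $a$, push $\eta$ into it with the north-south dynamics of $g$ itself, and treat $\eta=b$ separately. Your route is slightly more economical, in that it invokes the first assertion only for the original $g$ and does not re-quote Lemma \ref{minimal}; the paper's formulation yields the marginally stronger conclusion that every non-empty closed $G$-invariant subset of the whole boundary $\partial\overline{{\cal T}(S)}$ contains $\Lambda$. Both versions silently pass between open subsets of $\Lambda$ and open subsets of $\partial\overline{{\cal T}(S)}$ when applying north-south dynamics; this is harmless bookkeeping since $\Lambda$ is closed and $G$-invariant.
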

\begin{proof}
Let $G<{\rm Mod}(S)$ be a non-elementary subgroup
with limit set $\Lambda$ which contains 
a pseudo-Anosov element $g\in G$.
Let $a,b\in \Lambda$ be the attracting and 
repelling fixed points of $g$, respectively, and 
let $V\subset \Lambda$ be a non-empty open set.
By Lemma \ref{minimal}, the limit set $\Lambda$ does not
have isolated points and up to replacing 
$g$ by $g^{-1}$ (and exchanging $a$ and $b$) there is an element
$v\in G$ which maps $a$ to $v(a)\in V-\{a,b\}$. 
Then $h=vgv^{-1}$ is a
pseudo-Anosov element  
with fixed points $v(a)\in V-\{a,b\},v(b)
\in \Lambda$. By Lemma \ref{pseudofix}, 
we have $v(b)\not\in \{a,b\}$. 
By Lemma \ref{northsouth}, 
$h$ acts with north-south 
dynamics on $\partial\overline{{\cal T}(S)}$ and hence
$h^k\{a,b\}\subset V$ 
for all sufficiently large $k$.

Every closed $G$-invariant subset $A$ of 
$\partial \overline{{\cal T}(S)}$
contains every fixed point of every pseudo-Anosov element.
Namely, if $a\not= b$ are the two fixed points of a
pseudo-Anosov element $g\in G$ and if there is some $\xi\in A-\{a,b\}$
then also $\{a,b\}\subset A$ since $A$ is closed
and $g$ acts with north-south dynamics on 
$\partial\overline{{\cal T}(S)}$.
On the 
other hand, if $a\in A$ then by the above consideration
there is some $h\in G$ with $h(a)\in \Lambda-\{a,b\}$
and once again, we conclude by invariance that $b\in A$ as well.
Now the set of all fixed points of 
pseudo-Anosov elements of $G$ is $G$-invariant and hence the smallest
non-empty closed $G$-invariant subset of 
$\partial \overline{{\cal T}(S)}$ is the
closure of the set of fixed points of pseudo-Anosov elements.
This set contains
the limit set $\Lambda$ of $G$ by Lemma \ref{minimal} and hence
it coincides with $\Lambda$. In other words, 
the action of $G$ on $\Lambda$ is minimal.
The lemma is proven.
\end{proof}

Note the following immediate corollary of 
Lemma \ref{pseudofix}.

\begin{corollary}\label{nofixinf}
Let $G<{\rm Mod}(S)$ be a non-elementary subgroup
which contains a pseudo-Anosov element.
Then $G$ does not fix a point in $\partial\overline{{\cal T}(S)}$.
\end{corollary}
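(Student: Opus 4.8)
The plan is to argue by contradiction. Suppose that $G$ fixes a point $\xi\in\partial\overline{{\cal T}(S)}$, and let $g\in G$ be a pseudo-Anosov element with attracting and repelling fixed points $a\neq b$. By Proposition \ref{flatrank} the element $g$ is rank-one, so by Lemma \ref{northsouth} it acts on $\partial\overline{{\cal T}(S)}$ with north-south dynamics; in particular its only fixed points are $a$ and $b$. Since $g\in G$ fixes $\xi$, this forces $\xi\in\{a,b\}$, and after possibly replacing $g$ by $g^{-1}$ I may assume $\xi=a$.

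The key step is to upgrade this to the statement that $G$ fixes \emph{both} $a$ and $b$. For an arbitrary $u\in G$, the conjugate $ugu^{-1}$ is again pseudo-Anosov, with fixed point set $\{ua,ub\}$. Because $u$ fixes $\xi=a$, the point $a$ is a common fixed point of $g$ and of $ugu^{-1}$, so Lemma \ref{pseudofix} applies and yields $\{a,b\}=\{ua,ub\}=\{a,ub\}$. Since $u$ is a bijection and $ua=a$, we cannot have $ub=a$, hence $ub=b$. Thus every element of $G$ fixes $b$ as well.

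It then remains to see that this contradicts non-elementarity. Having shown that $G$ fixes both $a$ and $b$, every conjugate $ugu^{-1}$ with $u\in G$ has fixed point set exactly $\{a,b\}$, so the set of fixed points of all conjugates of $g$ in $G$ equals $\{a,b\}$. By Lemma \ref{minimal} the limit set $\Lambda$ is the closure of this set, whence $\Lambda=\{a,b\}$ consists of precisely two points. This contradicts the hypothesis that $G$ is non-elementary, i.e.\ that $\Lambda$ contains at least three points, and completes the argument.

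I expect the only substantive step to be the middle one, where north-south dynamics pin $\xi$ down to an endpoint of the axis of $g$ and Lemma \ref{pseudofix} is invoked to propagate fixation from $a$ to $b$; the surrounding steps are essentially bookkeeping with the lemmas already established. The one point to be careful about is the reduction $\xi\in\{a,b\}$, which relies on $g$ having no fixed points other than $a,b$ — but this is exactly the content of north-south dynamics, so no separate argument is needed.
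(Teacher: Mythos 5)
Your proof is correct and follows exactly the route the paper intends: the corollary is stated there as an immediate consequence of Lemma \ref{pseudofix}, and your argument is precisely the natural fleshing-out of that claim, using north-south dynamics (Lemma \ref{northsouth}) to pin the fixed point to $\{a,b\}$, Lemma \ref{pseudofix} to propagate fixation to both endpoints, and Lemma \ref{minimal} to force $\Lambda=\{a,b\}$, contradicting non-elementarity.
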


{\bf Example:} There are non-elementary groups $G<{\rm Mod}(S)$
which fix a point in $\partial\overline{{\cal T}(S)}$.
Namely, let $c$ be a simple closed separating curve on $S$
so that $S-c=S_1\cup S_2$ where neither $S_1$ nor $S_2$ is 
a three-holed sphere. Let $\gamma_i\in {\rm Mod}(S_i)$ be a
pseudo-Anosov element $(i=1,2)$. Then $\gamma_1,\gamma_2$ generate
a free abelian subgroup $G$ of ${\rm Mod}(S)$ whose limit set
is a circle which is fixed pointwise by $G$.

\bigskip

We are now ready to show.

\begin{proposition}\label{pairfixdense}
Let $G<{\rm Mod}(S)$ be a non-elementary subgroup
with limit set $\Lambda$ 
which contains a pseudo-Anosov element. 
\begin{enumerate}
\item
The pairs of fixed points of pseudo-Anosov elements
of $G$ are dense in $\Lambda\times \Lambda$.
\item For any two non-empty open subsets $W_1,W_2$ of 
$\Lambda\times \Lambda$ there is some 
$g\in G$ with $gW_1\cap W_2\not=\emptyset$.
\end{enumerate}
\end{proposition}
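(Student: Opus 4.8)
The plan is to prove part (1) first and then deduce part (2) from it together with minimality (Lemma~\ref{moveaway}) and north-south dynamics (Lemma~\ref{northsouth}).

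For part (1), fix a pseudo-Anosov $g_0\in G$ with attracting and repelling fixed points $a^+,a^-$. Given a target pair $(\xi,\eta)\in\Lambda\times\Lambda$ and open neighborhoods $U_1\ni\xi$, $U_2\ni\eta$, I want a pseudo-Anosov element of $G$ with one fixed point in $U_1$ and the other in $U_2$. If $\xi=\eta$ I take $U_1=U_2$: by Lemma~\ref{moveaway} there is $u\in G$ with $u\{a^+,a^-\}\subset U_1$, so that $ug_0u^{-1}$ is pseudo-Anosov with both fixed points in $U_1$, which already approximates the diagonal. So assume $U_1\cap U_2=\emptyset$. Applying Lemma~\ref{moveaway} twice I obtain pseudo-Anosov elements $h_1=u_1g_0u_1^{-1}$ and $h_2=u_2g_0u_2^{-1}$ whose fixed point sets $\{p_1^\pm\}$, $\{p_2^\pm\}$ lie in $U_1$, respectively $U_2$; by disjointness and Lemma~\ref{pseudofix} these two fixed point sets are disjoint. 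I then run a ping-pong argument for $w=h_1^mh_2^n$: choosing disjoint closed neighborhoods of the four fixed points inside $U_1\cup U_2$ and using that $h_1,h_2$ are rank-one (Proposition~\ref{flatrank}) and hence act with north-south dynamics on $\partial\overline{{\cal T}(S)}$ (Lemma~\ref{northsouth}), I get for all large $m,n$ that $w$ sends the complement of a neighborhood of $p_2^-$ into a neighborhood of $p_1^+$, and $w^{-1}$ sends the complement of a neighborhood of $p_1^+$ into a neighborhood of $p_2^-$. Thus $w$ itself acts with north-south dynamics on $\partial\overline{{\cal T}(S)}$, with attracting fixed point in $U_1$ and repelling fixed point in $U_2$.

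The main obstacle is to upgrade this to the statement that $w$ is \emph{pseudo-Anosov}, which is what part (1) demands. I expect to argue as follows. An element with north-south dynamics on $\partial\overline{{\cal T}(S)}$ has exactly two fixed points there and has unbounded orbits, so by Lemma~\ref{semisimple2} it cannot be elliptic (elliptic isometries fix an interior point and have bounded orbits); in particular no power of $w$ is a multi-twist. By the Nielsen-Thurston classification $w$ is therefore pseudo-Anosov or reducible. In the reducible case a suitable power of $w$ preserves a multi-curve and restricts to a pseudo-Anosov on some component; as in the Example following Proposition~\ref{flatrank} this forces an axis bounding a flat, and a Euclidean translation of a flat fixes the entire circle at infinity of that flat pointwise, contradicting the fact that $w$, and hence every power of $w$, has only two fixed points on $\partial\overline{{\cal T}(S)}$. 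Thus $w$ is pseudo-Anosov, and $(p_w^+,p_w^-)\in U_1\times U_2$; applying this to $w^{-1}$ gives pairs in $U_2\times U_1$, yielding density in $\Lambda\times\Lambda$. (Alternatively one may check that $w$ has north-south dynamics on ${\cal P\cal M\cal L}$ by the classical ping-pong for independent pseudo-Anosov elements and invoke the equivalence recorded before Lemma~\ref{northsouth}.)

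For part (2) I first reduce to basic open sets: it suffices to find $g\in G$ with $g(O_1\times O_1')\cap(O_2\times O_2')\not=\emptyset$ for nonempty open $O_i,O_i'\subset\Lambda$ with $O_1\times O_1'\subset W_1$ and $O_2\times O_2'\subset W_2$. Since $g$ acts diagonally, $g(O_1\times O_1')=g(O_1)\times g(O_1')$, so the condition splits into the two independent requirements $g(O_1)\cap O_2\not=\emptyset$ and $g(O_1')\cap O_2'\not=\emptyset$. By part (1) there is a pseudo-Anosov $h\in G$ with attracting fixed point $a\in O_2$ and repelling fixed point $b\in O_2'$. By minimality (Lemma~\ref{moveaway}) the orbit $Gb$ is dense, so there is $u\in G$ with $b\in u(O_1')$. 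I then set $g=h^ku$ for large $k$. The point $b$ lies in $u(O_1')$ and is fixed by $h^k$, so $g$ maps a point of $O_1'$ to $b\in O_2'$; and choosing any $p\in u(O_1)$ with $p\not=b$ (possible since $\Lambda$ has no isolated points by Lemma~\ref{minimal}), north-south dynamics gives $h^kp\to a\in O_2$, so for large $k$ the map $g$ sends a point of $O_1$ into $O_2$. Both requirements hold, which proves part (2).
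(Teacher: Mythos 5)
Your part (2) is fine: granting part (1), the trick of composing a pseudo-Anosov $h$ with fixed points in $O_2\times O_2'$ with an element $u$ bringing $O_1'$ onto the repelling fixed point, and then taking $h^ku$ for large $k$, is correct and is a legitimate variant of the paper's argument (the paper instead nests neighborhoods using two pseudo-Anosov elements). Also, your parenthetical fallback for part (1) --- ping-pong on ${\cal P\cal M\cal L}$ and the fact that north-south dynamics on ${\cal P\cal M\cal L}$ characterizes pseudo-Anosov classes --- is in essence exactly what the paper does: it quotes McCarthy \cite{MC85} to conclude that the product of sufficiently high powers of two independent pseudo-Anosov elements is pseudo-Anosov, and only \emph{afterwards} uses north-south dynamics on $\partial\overline{{\cal T}(S)}$ together with the ping-pong inclusions to locate the fixed point pair. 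The problem is that your main, CAT(0)-intrinsic route for part (1) has genuine gaps, precisely at the step you yourself flagged as the main obstacle.

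First, the inference ``$w$ maps the complement of a neighborhood $B$ of $p_2^-$ into a neighborhood $A$ of $p_1^+$, and $w^{-1}$ maps the complement of $A$ into $B$, \emph{thus} $w$ acts with north-south dynamics'' is unjustified. The boundary $\partial\overline{{\cal T}(S)}$ is not compact (not even locally compact), so the nested sets $w^k(\overline A)$ need not have a common point; and while $w$ is semisimple by Lemma \ref{semisimple2}, hence elliptic or axial, nothing yet places fixed points of $w$ inside $A$ and $B$, nor gives the uniform convergence the definition demands --- an axial isometry need not attract anything to its axis endpoints (a Euclidean translation fixes its whole circle at infinity pointwise). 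Getting from the inclusions to actual north-south dynamics is exactly what rank-one-ness buys you, and rank-one-ness of $w$ is what is in question. Second, and more fundamentally, the dichotomy you use in the reducible case is false. The Example after Proposition \ref{flatrank} produces a flat because the reducing curve is separating and the stratum is a product ${\cal T}(S_1)\times{\cal T}(S_2)$ with a positive-complexity factor on which the element acts trivially (or axially). But a reducible, non-elliptic $w$ may be pseudo-Anosov on the \emph{unique} positive-complexity component of the complement of its reducing multi-curve --- e.g.\ $c$ a single non-separating curve and $w$ pseudo-Anosov on $S-c$. Then the stratum ${\cal T}(S)_c$ is a single Teichm\"uller space, not a product, and no flat half-plane is bounded by the axis at all: such a flat cannot meet ${\cal T}(S)$ (negative curvature plus the Gauss equation), near the axis the completion locus coincides with the open stratum ${\cal T}(S)_c$, and that stratum is itself negatively curved. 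So in this case there are no extra fixed points at infinity to contradict, and your argument cannot exclude that the ping-pong element $w$ is such a reducible class. Ruling this out is exactly the content of McCarthy's theorem on ${\cal P\cal M\cal L}$; it is not accessible from the visual-boundary dynamics of $\overline{{\cal T}(S)}$ with the tools available in this paper.
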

\begin{proof}
Let $G<{\rm Mod}(S)$ be a non-elementary
subgroup with limit set $\Lambda$. Assume that $G$  
contains a pseudo-Anosov element $g$ with
attracting fixed point $a\in \Lambda$ and repelling fixed point
$b\in \Lambda$.

Let $U\subset \Lambda\times \Lambda$ 
be a non-empty open set. Our goal is to show 
that $U$ contains a pair of fixed points
of a pseudo-Anosov element. Since $\Lambda$ does not
contain isolated points, for this
we may assume that there are small open sets 
$V_i\subset \partial\overline{{\cal T}(S)}-\{a,b\}$ 
with disjoint closure $\overline{V_i}$ $(i=1,2)$ and
such that $U=V_1\times V_2\cap \Lambda\times \Lambda$.

Choose some $u\in G$ which maps $\{a,b\}$ into $V_1$.
Such an element exists by Lemma \ref{moveaway}.
Then $v=ugu^{-1}$ is a pseudo-Anosov element
with fixed points $ua,ub\in V_1$.
Similarly, there is a pseudo-Anosov element  
$w\in G$ with both fixed points in $V_2$. Via replacing $v,w$ by
sufficiently high powers we may assume that
$v(\partial\overline{{\cal T}(S)}-V_1)
\subset V_1,v^{-1}(\partial \overline{{\cal T}(S)}-V_1)\subset V_1$ 
and that $w(\partial\overline{{\cal T}(S)}-V_2)\subset V_2,
w^{-1}(\partial\overline{{\cal T}(S)}-V_2)\subset V_2$.
Then we have $wv(\partial \overline{{\cal T}(S)}-V_1)\subset V_2$ 
and $v^{-1}w^{-1}(\partial\overline{{\cal T}(S)} -V_2)\subset V_1$.
By a result of McCarthy \cite{MC85}, up to possibly replacing
$v$ and $w$ by even higher powers 
we may assume that $wv$ is pseudo-Anosov.
Then $wv$ acts on $\partial\overline{{\cal T}(S)}$ with
north-south-dynamics. Since $wv(\overline{V_2})\subset V_2$ and
$v^{-1}w^{-1}(\overline{V_1})\subset V_1$, 
the pair of fixed points of $wv$ is necessarily contained in
$V_1\times V_2$ and hence in $U$.
The first part of the proposition is proven.

To show the second part of the proposition, let  
$W_1,W_2\subset\Lambda\times \Lambda$ be non-empty open sets.
We have to show that there is some $g\in G$ such that
$gW_1\cap W_2\not=\emptyset$. For this
we may assume without loss of generality that
$W_1=U_1\times U_2,W_2=U_3\times U_4$ where $U_1,U_2$
and $U_3,U_4$  
are non-empty open subsets of $\Lambda$ with disjoint 
closure. Since $\Lambda$ does not have isolated points, 
by possibly 
replacing $U_i$ by proper non-empty open subsets we may assume that
the sets $U_i$ are pairwise disjoint.

By the first part of the proposition, there is a pseudo-Anosov element
$u\in G$ with attracting fixed point in $U_1$ and 
repelling fixed point in $U_4$.
Since $u$ acts on $\partial \overline{{\cal T}(S)}$ with north-south dynamics,
there is some $k>0$ and a small open neighborhood 
$U_5\subset U_1$ of the attracting fixed point of $u$ 
such that $u^{-k}(U_5\times U_2)\subset
U_1\times U_4$. The same argument produces an element
$w\in G$, a number $\ell >0$ and an open subset 
$U_6$ of $U_2$ such that $w^\ell(u^{-k}(U_5\times U_6))\subset
U_3\times U_4$. This completes the proof of the proposition.
\end{proof}

As noted in the example after Corollary \ref{nofixinf},
in general the second part of 
Proposition \ref{pairfixdense}
does not hold true for 
non-elementary subgroups of ${\rm Mod}(S)$
which do not contain a pseudo-Anosov element.

\section{The Weil-Petersson geodesic flow}

In this section we discuss some implications of the 
results in the previous section to the dynamics of the
Weil-Petersson geodesic flow on moduli space.

Let $T^1{\cal T}(S)$ be the unit tangent bundle of 
${\cal T}(S)$ for the Weil-Petersson metric.
The \emph{Weil-Petersson geodesic
flow} $\Phi^t$ acts on $T^1{\cal T}(S)$
by associating to a direction $v$ and a number
$t>0$ the unit tangent $\Phi^tv$ at $t$ of the geodesic with
initial velocity $v$. Note that this flow is not everywhere
defined due to the existence of finite length geodesics 
which end in a point in $\overline{{\cal T}(S)}-{\cal T}(S)$. 
Define
${\cal G}\subset T^1{\cal T}(S)$ to be the space of all
directions of biinfinite geodesics, i.e. such that
the flow line of $\Phi^t$ through a point $v\in {\cal G}$
is defined for all times. 
Note that the set ${\cal G}$ is 
invariant under the
action of the extended mapping class group.

The following result is due to Wolpert \cite{W03,W06}.

\begin{lemma}\label{raydense}
${\cal G}$ is a dense $G_\delta$-subset of $T^1{\cal T}(S)$ of 
full Lebesgue measure.
\end{lemma}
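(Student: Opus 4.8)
The claim is that $\mathcal{G}$—the set of unit tangent vectors whose geodesic is bi-infinite (defined for all time)—is a dense $G_\delta$ subset of full Lebesgue measure in $T^1\mathcal{T}(S)$.

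Let me think about what's involved here. We have $T^1\mathcal{T}(S)$, the unit tangent bundle. A vector $v$ lies in $\mathcal{G}$ iff the geodesic through $v$ can be extended for all $t \in \mathbb{R}$. The obstruction to extension is hitting the completion locus $\overline{\mathcal{T}(S)} - \mathcal{T}(S)$ in finite time.

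**$G_\delta$ structure:** The natural approach is to write $\mathcal{G}$ as a countable intersection. For each $n$, let $\mathcal{G}_n$ be the set of $v$ such that the geodesic through $v$ is defined on $[-n, n]$. These should be open sets (by continuous dependence on initial conditions / the flow being an open condition). Then $\mathcal{G} = \bigcap_n \mathcal{G}_n$ is $G_\delta$. The key input here: the flow is defined on an open set, and "being defined for time $\geq n$ in both directions" is open. This uses smooth dependence of WP geodesics on endpoints (cited as DW03).

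**Density:** This is where I'd need more. Density of $\mathcal{G}$ means bi-infinite geodesic directions are dense. One route: any geodesic segment that stays in the thick part $\mathcal{T}(S)_\epsilon$ can be extended; using that thick-part geodesics are contracting (Lemma \ref{wpcontr}) and the recurrence/density of such directions. Alternatively, invoke that the incomplete geodesics form a "small" set.

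**Full measure:** This is the heart of the matter and the main obstacle. One must show that the set of directions whose geodesic hits the completion locus in finite time has Lebesgue measure zero. The natural tool is a measure-theoretic/dynamical argument: the WP geodesic flow preserves a Liouville-type measure (the volume induced by the symplectic/Kähler structure), which is finite because the WP volume of moduli space is finite (a theorem of Wolpert). By the Poincaré recurrence theorem applied to this finite invariant measure, almost every point is recurrent, hence its forward and backward orbits return to a compact thick part infinitely often and therefore cannot escape to the completion locus in finite time. This is exactly the kind of argument attributed to Wolpert \cite{W03,W06}.

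**My proof plan.** First I would fix the $G_\delta$ claim: define $\mathcal{G}_n = \{v : \Phi^t v \text{ is defined for } t\in[-n,n]\}$ and argue each $\mathcal{G}_n$ is open using smooth/continuous dependence of WP geodesics on their initial data (DW03), so $\mathcal{G} = \bigcap_n \mathcal{G}_n$ is $G_\delta$.

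Next, for full measure, I would invoke that the WP symplectic volume form gives a finite $\Phi^t$-invariant measure on $T^1\mathcal{M}(S)$ (finiteness of WP volume of moduli space, due to Wolpert). By Poincaré recurrence, almost every direction is both forward- and backward-recurrent to a fixed compact subset of moduli space (hence to some thick part $\mathcal{T}(S)_\epsilon$ upstairs). A recurrent geodesic cannot reach the completion locus in finite time, since the WP distance from $\mathcal{T}(S)_\epsilon$ to the completion locus is positive (as recorded in Section 3); an incomplete geodesic leaving every thick part in finite forward time cannot return infinitely often. Thus the complement of $\mathcal{G}$ is null. The main obstacle is this full-measure step: one must ensure the invariant measure is genuinely finite and that recurrence upstairs transfers correctly through the (non-cocompact, non-proper) $\mathrm{Mod}(S)$-action, which is where the cited work of Wolpert does the real labor.

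Finally, for density I would combine full measure with the $G_\delta$ structure: a full-measure $G_\delta$ set is automatically dense, so density is a formal consequence of the previous two points and no separate argument is needed.

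<br>

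I expect the measure-zero (full Lebesgue measure) assertion to be the genuine difficulty, and it is precisely the content extracted from \cite{W03,W06}; the $G_\delta$ and density portions are then comparatively routine consequences.
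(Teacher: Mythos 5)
Your $G_\delta$ step is fine, and the reduction ``full measure $\Rightarrow$ dense'' is logically valid, but the full-measure step --- which you correctly identify as the heart of the lemma --- does not work as you propose, and since your density argument rests on it, the whole proof collapses. The problem is circularity: the Poincar\'e recurrence theorem applies to a measure-preserving flow that is defined almost everywhere for all time, and that is precisely what Lemma \ref{raydense} is trying to establish. For a flow that is only partially defined, finiteness of an invariant measure gives no recurrence at all: consider the translation flow $\Phi^t(x)=x+t$ on $X=[0,1]$ with Lebesgue measure. It preserves the (finite) measure wherever it is defined, yet every single orbit escapes in finite time, so the escaping set has full measure. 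Hence ``finite invariant Liouville volume plus Poincar\'e recurrence implies almost every geodesic is bi-infinite'' is not a valid implication; some geometric input about the completion locus is indispensable. (Consistently with this, the paper invokes Poincar\'e recurrence only later, in the proof of Proposition \ref{approximation}, \emph{after} Lemma \ref{raydense} is available.)

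The input the paper actually uses is the stratified structure of $\overline{\mathcal{T}(S)}-\mathcal{T}(S)$ going back to Masur: the completion locus is a countable union of closed convex strata, each of real codimension two. Since geodesics between points of $\overline{\mathcal{T}(S)}$ are unique and depend continuously on their endpoints (the ${\rm CAT}(0)$ property), the set of unit vectors at a fixed basepoint whose geodesic terminates in the closure of a fixed stratum is a closed subset of codimension one of the unit sphere (a cone over a codimension-two set). Therefore $\mathcal{G}$ is the complement of a countable union of closed codimension-one subsets, which yields the dense $G_\delta$ property directly; and Wolpert's observation that for each basepoint the escaping directions form a set of measure zero in the unit sphere, combined with integration over the basepoint, yields full Lebesgue measure. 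Your time-truncation decomposition $\mathcal{G}=\bigcap_n\mathcal{G}_n$ into open sets is a perfectly good alternative route to the $G_\delta$ statement, but neither density nor full measure can be reached by your dynamical argument; both require the codimension-two structure of the strata.
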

\begin{proof}
A direction at a point $x\in {\cal T}(S)$ either 
defines a geodesic ray (i.e. a 
geodesic
defined on the half-line $[0,\infty)$)
or a geodesic which 
ends at a point in $\overline{{\cal T}(S)}-{\cal T}(S)$.
The set $\overline{{\cal T}(S)}-{\cal T}(S)$ 
is a countable union of closed convex
strata, each of real codimension two. Since by the 
${\rm Cat}(0)$-property,
any two points $x,y\in \overline{{\cal T}(S)}$ can be connected
by a unique geodesic depending continuously on $x,y$, 
the set of directions of geodesics issuing from a point in 
${\cal T}(S)$ and which terminate in the closure of 
a fixed stratum is a closed subset of $T^1{\cal T}(S)$ of 
real codimension one. Thus ${\cal G}$ is the complement
in $T^1{\cal T}(S)$ of a countable 
union of closed subsets of codimension one, i.e. it is
a dense $G_\delta$-set (we refer to \cite{W03,W06} for details). 

Wolpert \cite{W03,W06}
also observed that for every $x\in {\cal T}(S)$
the set of directions
of geodesic rays issuing from $x$ has full Lebesgue measure in the
unit sphere at $x$. Then ${\cal G}$ has full Lebesgue measure.
\end{proof}

To each $v\in {\cal G}$ we can associate the ordered pair 
$\pi(v)\in \partial\overline{{\cal T}(S)}\times
\partial\overline{{\cal T}(S)}$ 
of endpoints of the biinfinite geodesic $\gamma$ with initial velocity
$v$ (here ordered means that $\pi(v)=(\gamma(\infty),\gamma(-\infty))$).
The map $\pi$ clearly is invariant under the action of the geodesic
flow on $T^1{\cal T}(S)$ and hence it factors through a map
of the quotient space ${\cal G}/\Phi^t$. Since the Weil-Petersson metric
is negatively curved, by the flat strip theorem (Theorem II.2.13
of \cite{BH99}) the induced map 
${\cal G}/\Phi^t\to \pi({\cal G})\subset
\partial \overline{{\cal T}(S)}\times 
\partial\overline{{\cal T}(S)}$
is injective. This means that the set $\pi({\cal G})$ can be
equipped with two natural topologies: the
topology as a quotient of ${\cal G}$, and 
the induced topology as a subset of 
$\partial \overline{{\cal T}(S)}\times \partial\overline{{\cal T}(S)}$.
We next observe that these two topologies coincide.

\begin{lemma}\label{fiber}
The map $\pi$ factors through
a ${\rm Mod}(S)$-equivariant homeomorphism of 
${\cal G}/\Phi^t$ equipped with the quotient topology
onto $\pi({\cal G})$ equipped with the topology as a subset of
$\partial \overline{{\cal T}(S)}\times 
\partial\overline{{\cal T}(S)}$.
\end{lemma}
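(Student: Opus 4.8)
The plan is to prove that $\pi$ factors through a continuous bijection from ${\cal G}/\Phi^t$ onto $\pi({\cal G})$ and that its inverse is also continuous. The map $\pi$ is clearly ${\rm Mod}(S)$-equivariant since the endpoints of a geodesic are carried to the endpoints of its image under any isometry, and it is $\Phi^t$-invariant by construction, so it descends to the quotient. Injectivity on ${\cal G}/\Phi^t$ was already noted to follow from the flat strip theorem together with negativity of curvature: if two biinfinite geodesics share both endpoints, they would bound a flat strip, which negative curvature forbids, so they coincide as unparametrized geodesics, i.e. they agree up to the flow. Thus I get a continuous $\Phi^t$-equivariant bijection, and the only remaining issue is continuity of the inverse.

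Let me think about continuity of $\pi$ itself first. Suppose $v_i \to v$ in ${\cal G}$. I want to show the endpoints $\gamma_i(\pm\infty)$ converge to $\gamma(\pm\infty)$ in $\partial\overline{{\cal T}(S)}$. The geodesics $\gamma_i$ with initial velocity $v_i$ converge locally uniformly to $\gamma$ by uniqueness and continuous dependence of Weil-Petersson geodesics on their initial data (the same fact used in Lemma~\ref{raydense}); in a Hadamard space, local uniform convergence of geodesic rays issuing from a fixed point is exactly convergence in the visual boundary topology, so after fixing a basepoint and comparing the rays from that basepoint to the two endpoints, I get $\gamma_i(\pm\infty)\to\gamma(\pm\infty)$. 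This gives continuity of $\pi$ on ${\cal G}$, hence on the quotient.

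The real content is continuity of the inverse map $\pi({\cal G}) \to {\cal G}/\Phi^t$. Here I take a sequence $(\xi_i,\eta_i) = \pi(w_i)$ in $\pi({\cal G})$ converging to $(\xi,\eta)=\pi(w)$ in the subspace topology of $\partial\overline{{\cal T}(S)}\times\partial\overline{{\cal T}(S)}$, and I must produce representatives $v_i$ of the classes of $w_i$ with $v_i \to v$ for some representative $v$ of the class of $w$. The natural strategy is to fix a basepoint $x_0\in{\cal T}(S)$ and define, for each index, the point on the biinfinite geodesic from $\eta_i$ to $\xi_i$ nearest to $x_0$; call its unit tangent pointing toward $\xi_i$ the representative $v_i$. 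The point is that the nearest-point construction selects a canonical representative in each flow orbit, and I must show $v_i \to v$. Since $\xi_i\to\xi$ and $\eta_i\to\eta$ with $\xi\neq\eta$, the biinfinite geodesics connecting $\eta_i$ to $\xi_i$ should converge locally uniformly to the one connecting $\eta$ to $\xi$; the base points and tangents then converge by continuity of the nearest-point projection.

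The main obstacle, which I expect to require genuine care, is the existence and convergence of these biinfinite connecting geodesics, because $\overline{{\cal T}(S)}$ is \emph{not} locally compact and the visual boundary is badly behaved. In a general non-proper Hadamard space, two boundary points need not be joined by any geodesic, and even when a connecting geodesic exists it may lie in the completion locus rather than in ${\cal T}(S)$; what rescues the argument here is that the pair $(\xi,\eta)$ lies in $\pi({\cal G})$, so by hypothesis there is a biinfinite geodesic in ${\cal T}(S)$ realizing it, and the analogous statement holds for each $(\xi_i,\eta_i)$. I would establish the local uniform convergence of the connecting geodesics directly: picking a large parameter interval, the endpoint convergence forces the restricted subarcs to converge by a comparison argument using convexity of the distance function (every point of $\gamma_i$ on a fixed compact parameter window is pinned down, up to a controlled error, by how close $\gamma_i(\pm\infty)$ are to $\gamma(\pm\infty)$ and by the nearest-point normalization). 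Once local uniform convergence of the geodesics is in hand, convergence of the normalized tangent vectors $v_i\to v$ in $T^1{\cal T}(S)$ follows, giving continuity of the inverse and completing the identification as a homeomorphism.
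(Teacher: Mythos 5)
Your overall structure matches the paper's: continuity and equivariance of $\pi$ are immediate, injectivity of the induced map on ${\cal G}/\Phi^t$ follows from the flat strip theorem together with negative curvature, and the entire content of the lemma is the continuity of the inverse (equivalently, openness of $\pi$ onto its image). You also correctly identify why this is delicate: $\overline{{\cal T}(S)}$ is not locally compact, so no Arzel\`a--Ascoli extraction of limiting geodesics is available. But at exactly this point your argument stops being a proof. The claim that ``endpoint convergence forces the restricted subarcs to converge by a comparison argument using convexity of the distance function'' is precisely the statement to be proven, and CAT(0) convexity alone cannot deliver it. Convexity yields upper bounds on the distance between two geodesics once one already knows they come close to each other somewhere; it provides no mechanism forcing a geodesic line $\gamma_i$ whose endpoints $(a_i,b_i)$ are visually close to $\pi(v)$ to enter a prescribed neighborhood of the footpoint $x$ of $v$. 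The visual topology only controls the rays issuing from $x$ toward $a_i$ and $b_i$; relating those rays to the line $\gamma_i$ requires knowing where $\gamma_i$ lies, which is circular. Moreover $\overline{{\cal T}(S)}$ contains isometrically embedded Euclidean planes (see the example following Proposition \ref{flatrank}), so no uniform hyperbolicity or thinness of ideal triangles is available to bound $d_{WP}(x,\gamma_i(\mathbb{R}))$ in terms of the endpoints; and although the Weil--Petersson curvature is negative on ${\cal T}(S)$, it is not bounded away from zero, so nothing in your outline rules out that the lines $\gamma_i$ stay at distance $\geq\epsilon$ from $x$, drifting through nearly flat regions, while their endpoints converge.

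The paper closes exactly this gap with a genuinely Riemannian argument that your proposal does not contain. Assuming $d_{WP}(x,\gamma_i(\mathbb{R}))\geq\epsilon$ for all $i$, it forms the ruled ideal triangle $\Delta_i$ with vertices $x,a_i,b_i$, whose intrinsic Gauss curvature is bounded above by the ambient Weil--Petersson curvature, hence by a negative constant $-r$ on the $r$-ball about $x$; the standing assumption forces $\Delta_i$ to contain an embedded strip of definite area inside that ball, so the curvature integral satisfies $\int_{\Delta_i}K\leq -r^3/4$ uniformly in $i$. On the other hand, the angle of $\Delta_i$ at $x$ tends to $\pi$, so Gauss--Bonnet forces $\int_{\Delta_i}K\to 0$, a contradiction. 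Some substitute for this step --- exploiting that the curvature near the fixed point $x$ is bounded above by a negative constant, via Gauss--Bonnet on ruled surfaces --- is unavoidable here; without it, your final paragraph asserts the hard direction of the lemma rather than proving it.
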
 
\begin{proof}
By the definition of the topology of $\partial\overline{{\cal T}(S)}$,
the map $\pi$ is continuous. Moreover, it is
clearly equivariant under the action of ${\rm Mod}(S)$.

We have to 
show that $\pi$ is open for the topology 
of ${\cal G}$ as a subset of $T^1{\cal T}(S)$ and for 
the topology of $\pi{\cal G}$ as a subset of 
$\partial\overline{{\cal T}(S)}\times
\partial\overline{{\cal T}(S)}$.
For this let $v\in {\cal G}$ and let
$U$ be a neighborhood of $v$ in ${\cal G}$. We have to find
a neighborhood $V$ of $\pi(v)$ in 
$\partial\overline{{\cal T}(S)}\times 
\partial \overline{{\cal T}(S)}$ 
such that the unit tangent line
of every biinfinite geodesic $\gamma$ whose pair of endpoints
is contained in $V$ passes through $U$.

For this let $x\in {\cal T}(S)$ be the footpoint of $v$.
Since a geodesic depends smoothly on its initial velocity,
if no such neighborhood $V$ of $\pi(v)$ exists 
then there is 
a sequence of points $(a_i,b_i)\subset 
\partial\overline{{\cal T}(S)}\times \partial\overline{{\cal T}(S)}$ 
which can be connected
by a geodesic line $\gamma_i$ in $\overline{{\cal T}(S)}$ and 
there is a number $\epsilon >0$ such that
$d_{WP}(x,\gamma_i(\mathbb{R}))\geq \epsilon$ for all $i$.

Let $\Delta_i$ be the ideal triangle in ${\cal T}(S)$ 
with vertices $x,a_i,b_i$. Since $a_i\to \gamma(\infty),
b_i\to \gamma(-\infty)$, the angle at $x$ of the triangle
$\Delta_i$ converges to $\pi$ as $i\to \infty$. 
Connect each point on the
geodesic ray from $x$ to $b_i$ to the point $a_i\in 
\partial\overline{{\cal T}(S)}$ by a geodesic ray.
This defines a \emph{ruled surface} in ${\cal T}(S)$ 
with smooth interior which we denote again by $\Delta_i$. 
The intrinsic Gau\ss{} curvature 
of this surface at a point $y$ 
is bounded from above by an upper bound for the curvature
of the Weil-Petersson metric at $y$. Since the Weil-Petersson 
metric is negatively curved, there is a number
$r<\epsilon$ such that the Gau\ss{} curvature of
the intersection of $\Delta_i$ with 
the ball of radius $r$ about $x$
is bounded from above by $-r$. (Such a argument has been used
in the literature many times. We refer to \cite{BMM08} for
a more detailed explanation and for additional references).

Let $\zeta_i:[0,\infty)\to {\cal T}(S)$ 
be the side of $\Delta_i$ connecting 
$x$ to $a_i$. 
The intrinsic angle at $x$ of the triangle $\Delta_i$ 
coincides with the Weil-Petersson angle at $x$.
By assumption, the distance
between $x$ and $\gamma_i$ is at least $\epsilon>r$.
If $i>0$ is sufficiently large that the angle of
$\Delta_i$ at $x$ exceeds $\pi/2$ then 
by convexity of the distance function, 
a geodesic in $\Delta_i$ for the intrinsic
metric which issues from a point in
$\zeta_i[0,r/2]$ and is perpendicular to
$\zeta_i$ does not intersect the side of 
$\Delta_i$ connecting $x$ to $b_i$. Therefore the maximal
length of such a geodesic is not smaller than
$r/2$. The union of these geodesic segments 
is an embedded strip in  
$\Delta_i$ which is 
contained in the ball of radius
$r$ about $x$ in ${\cal T}(S)$. Hence
the Gau\ss{} curvature of $\Delta_i$ at each point in
the strip is at most $-r$. Moreover, 
comparison with the euclidean plane shows that
the area of the strip is at least $r^2/4$.
Since the Gau\ss{} curvature of $\Delta_i$ is negative,
this implies that the integral of 
the Gau\ss{} curvature over $\Delta_i$ does not exceed 
$-r^3/4$.

On the other hand, the angle of $\Delta_i$ at $x$ 
tends to $\pi$ as $i\to \infty$.  
Since the Gau\ss{} curvature of $\Delta_i$ is negative, 
the Gau\ss{}-Bonnet theorem shows that 
the integral of the Gau\ss{} curvature of $\Delta_i$ tends
to zero as $i\to \infty$ which is a contradiction to the
estimate in the previous paragraph.
As a consequence,
the image of the open set $U$ under the projection
$\pi$ contains indeed an open subset
of $\pi({\cal G})$ equipped with the topology 
as a subspace of $\partial\overline{{\cal T}(S)}
\times \partial\overline{{\cal T}(S)}$ which shows the lemma.
\end{proof}

Let $T^1{\cal M}(S)$ be the quotient
of the unit tangent bundle $T^1{\cal T}(S)$ of ${\cal T}(S)$ under the
action of the (extended) mapping class group.
By equivariance, the Weil-Petersson geodesic flow projects
to a flow on $T^1{\cal M}(S)$.

Every pseudo-Anosov element
$g\in {\rm Mod}(S)$ defines a periodic orbit for the Weil-Petersson flow.
This periodic orbit is the projection of the unit
tangent line of an axis of $g$. These
are the only periodic orbits.
Namely, if $\{\Phi^tv\mid t\}$ is a periodic orbit in $T^1{\cal M}(S)$ 
then there is a biinfinite Weil-Petersson geodesic $\gamma$
in ${\cal T}(S)$ whose unit tangent line projects
to the orbit. This geodesic is invariant under an element
$g\in {\rm Mod}(S)$. Then $g$ is axial, with axis $\gamma\subset {\cal T}(S)$, 
and hence $g$ is pseudo-Anosov.

Proposition \ref{pairfixdense},
applied to the full mapping
class group, shows together with
Lemma \ref{fiber}
immediately the following result of 
Brock, Masur and Minsky \cite{BMM08}.

\begin{proposition}\label{density}
Periodic orbits are dense in $T^1{\cal M}(S)$.
\end{proposition}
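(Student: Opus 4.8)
Proposition \ref{density} asserts that periodic orbits are dense in $T^1{\cal M}(S)$. Let me sketch the proof.

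The plan is to transfer the density statement about pairs of fixed points (Proposition \ref{pairfixdense}, part (1)) from the boundary $\partial\overline{{\cal T}(S)}\times\partial\overline{{\cal T}(S)}$ down to the flow space $T^1{\cal M}(S)$, using the identification furnished by Lemma \ref{fiber}. First I would fix an arbitrary non-empty open set $O\subset T^1{\cal M}(S)$ and lift it to a ${\rm Mod}(S)$-invariant open set $\tilde O\subset T^1{\cal T}(S)$. Since periodic orbits are supported by biinfinite geodesics lying in ${\cal T}(S)$ (as explained in the paragraph preceding the proposition, these correspond exactly to axes of pseudo-Anosov elements), it suffices to produce a single pseudo-Anosov $g\in{\rm Mod}(S)$ whose axis $\gamma$ has unit tangent line meeting $\tilde O$. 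To find such a $g$, I would intersect $\tilde O$ with the dense $G_\delta$-set ${\cal G}$ of Lemma \ref{raydense}, pick a point $v\in\tilde O\cap{\cal G}$, and push its pair of endpoints $\pi(v)=(\gamma_v(\infty),\gamma_v(-\infty))$ to the boundary square.

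The key step invokes Lemma \ref{fiber}: because $\pi$ factors through a homeomorphism of ${\cal G}/\Phi^t$ onto $\pi({\cal G})\subset\partial\overline{{\cal T}(S)}\times\partial\overline{{\cal T}(S)}$, the image $\pi(\tilde O\cap{\cal G})$ contains a non-empty open subset $V$ of $\pi({\cal G})$ in the subspace topology, and $\pi(v)\in V$. Now I apply Proposition \ref{pairfixdense}(1) with $G={\rm Mod}(S)$: by Lemma \ref{limitsetmod} the limit set $\Lambda$ is all of $\partial\overline{{\cal T}(S)}$, so the pairs of fixed points of pseudo-Anosov elements are dense in $\partial\overline{{\cal T}(S)}\times\partial\overline{{\cal T}(S)}$. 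Hence some pseudo-Anosov $g$ has its ordered pair of fixed points $(a,b)$ lying in $V$. Since $(a,b)\in\pi({\cal G})$, the axis of $g$ is a biinfinite geodesic whose unit tangent data project, via the homeomorphism of Lemma \ref{fiber}, back into $\tilde O$; thus the periodic orbit determined by $g$ meets $O$ in $T^1{\cal M}(S)$.

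The main obstacle is purely topological rather than dynamical: one must be careful that ``density of pairs of fixed points in the boundary square'' really yields a pair inside the relatively open set $V\subset\pi({\cal G})$, and that this pair is itself realized by an honest biinfinite geodesic (i.e. lies in $\pi({\cal G})$, not merely in its closure). This is exactly what the openness half of Lemma \ref{fiber} guarantees, together with the observation that fixed-point pairs of pseudo-Anosov elements automatically lie in $\pi({\cal G})$ because such axes are genuine lines in ${\cal T}(S)$. Once the identification of the two topologies on $\pi({\cal G})$ is in hand, the argument is essentially a formal unwinding, and the substance of the proof has already been carried out in Lemmas \ref{limitsetmod} and \ref{fiber} and in Proposition \ref{pairfixdense}.
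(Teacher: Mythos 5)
Your proof is correct and follows essentially the same route as the paper: both arguments combine the density of ${\cal G}$ (Lemma \ref{raydense}), the density of pseudo-Anosov fixed-point pairs in $\partial\overline{{\cal T}(S)}\times\partial\overline{{\cal T}(S)}$ (Lemma \ref{limitsetmod} together with Proposition \ref{pairfixdense}), and the homeomorphism/openness statement of Lemma \ref{fiber} to pull density back to $T^1{\cal M}(S)$. Your version merely unwinds the paper's condensed density argument into an explicit open-set chase, including the (correct) observation that fixed-point pairs automatically lie in $\pi({\cal G})$ because axes of pseudo-Anosov elements are genuine biinfinite geodesics in ${\cal T}(S)$.
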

\begin{proof} Since the subset ${\cal G}$ of
$T^1{\cal T}(S)$ is dense and ${\rm Mod}(S)$-invariant, 
it suffices to show that the unit tangents of all axes
of all pseudo-Anosov elements are dense in ${\cal G}$.
Now by Lemma \ref{limitsetmod} 
and Proposition \ref{pairfixdense} the set of pairs of endpoints of
all axes of pseudo-Anosov elements in ${\rm Mod}(S)$ is 
dense in $\partial \overline{{\cal T}(S)}
\times \partial\overline{{\cal T}(S)}$ and hence
in $\pi{\cal G}$ and therefore
the proposition follows from Lemma \ref{fiber}.
\end{proof}

We now use local compactness of Teichm\"uller space to complete
the proof of Theorem 1 from the introduction.

\begin{proposition}\label{thm1finish}
Let $G<{\rm Mod}(S)$ be a non-elementary subgroup 
with limit set $\Lambda$
which contains a pseudo-Anosov element.
Then there is a dense orbit
for the action of $G$ on $\Lambda\times \Lambda$.
\end{proposition}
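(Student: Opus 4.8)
The plan is to reduce the statement to a Baire-category (Birkhoff transitivity) argument, carried out not on $\Lambda\times\Lambda$ itself — which need not be second countable, as $\partial\overline{{\cal T}(S)}$ is not locally compact — but on a second-countable model inside the unit tangent bundle $T^1{\cal T}(S)$. This is where local compactness of Teichm\"uller space enters: $T^1{\cal T}(S)$ is a second-countable, completely metrizable manifold, and by Lemma \ref{raydense} the set ${\cal G}$ of directions of biinfinite geodesics is a dense $G_\delta$, hence a Polish space.

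First I would set $\Sigma=(\Lambda\times\Lambda)\cap\pi({\cal G})$ and $Y=\pi^{-1}(\Lambda\times\Lambda)\subset{\cal G}$. Since $\Lambda$ is closed and $\pi$ is continuous, $Y$ is closed in ${\cal G}$, hence itself Polish (in particular second countable and Baire), and it is invariant under both $G$ and the flow $\Phi^t$. By Lemma \ref{fiber} the restriction of $\pi$ descends to a $G$-equivariant homeomorphism $Y/\Phi^t\cong\Sigma$, where $\Sigma$ carries the subspace topology from $\partial\overline{{\cal T}(S)}\times\partial\overline{{\cal T}(S)}$; in particular $\pi|_Y$ is an open continuous surjection onto $\Sigma$. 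As each pair of fixed points of a pseudo-Anosov element of $G$ lies in $\Sigma$ and these are dense in $\Lambda\times\Lambda$ by Proposition \ref{pairfixdense}, the set $\Sigma$ is dense in $\Lambda\times\Lambda$. Thus it suffices to produce a point of $\Sigma$ whose $G$-orbit is dense in $\Sigma$.

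Next I would promote the topological transitivity of Proposition \ref{pairfixdense}(2) to the combined action of the group generated by $G$ and the flow $\Phi^t$ on $Y$. Given nonempty open $U_1,U_2\subset Y$, their images $\pi(U_1),\pi(U_2)$ are open in $\Sigma$, hence are traces of nonempty open sets $\hat U_1,\hat U_2\subset\Lambda\times\Lambda$. By Proposition \ref{pairfixdense}(2) there is $g\in G$ with $g\hat U_1\cap\hat U_2\neq\emptyset$; this open set contains a pseudo-Anosov fixed-point pair $(\xi,\eta)\in\Sigma$, whence $(\xi,\eta)\in\pi(U_2)$ and $g^{-1}(\xi,\eta)\in\pi(U_1)$ by $G$-invariance of $\Sigma$. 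Lifting to $Y$ and using that $\pi$ is injective modulo the flow (Lemma \ref{fiber}), together with the fact that $g$ commutes with $\Phi^t$, I obtain $w_1\in U_1$ and $s\in\mathbb{R}$ with $\Phi^{-s}gw_1\in U_2$. This yields an element of the combined group carrying $U_1$ into a set meeting $U_2$, so the $\langle G,\Phi^t\rangle$-action on $Y$ is topologically transitive.

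Finally, since $Y$ is second countable and Baire, the Birkhoff transitivity theorem gives a dense $G_\delta$ of points $v\in Y$ whose combined orbit $\{\Phi^t g v\}$ is dense in $Y$. For such $v$, applying the continuous surjection $\pi|_Y$ and invoking flow-invariance of $\pi$ shows that the $G$-orbit of $\pi(v)$ equals the image under $\pi$ of the combined orbit, hence is dense in $\Sigma$ and therefore in $\Lambda\times\Lambda$. I expect the main obstacle to be precisely the transport of the argument to the Polish space $Y$: one must verify that $\pi|_Y$ is open onto $\Sigma$, that $\Sigma$ is genuinely dense, and that topological transitivity survives the passage through the flow quotient. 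The non-local-compactness of the boundary is exactly what forces this detour rather than a direct Baire argument on $\Lambda\times\Lambda$.
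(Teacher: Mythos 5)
Your proof is correct, and it rests on the same two pillars as the paper's --- Proposition \ref{pairfixdense} and the homeomorphism of Lemma \ref{fiber} --- but it runs the Baire-category step in a genuinely different place, so the two arguments are worth comparing. The paper descends to the quotient $N=T^1{\cal T}(S)/G$, which is locally compact and separable; it proves that the geodesic flow \emph{alone} is topologically transitive on $\overline{P{\cal G}_0}$ (the key observation being that a pseudo-Anosov fixed-point pair in $W_1\cap h^{-1}W_2$ and its $h$-translate are endpoint pairs of axes of \emph{conjugate} elements of $G$, hence project to one and the same periodic orbit of $\Phi^t$ in $N$ meeting both open sets), and then constructs a point with dense forward and backward flow orbit by a nested-neighborhood argument $\overline{V_i}\subset V_{i-1}$, $\Phi^{t_i}\overline{V_i}\subset U_i$, finally lifting that point. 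You instead stay upstairs and equivariantly: your Baire property comes not from local compactness of the quotient but from Lemma \ref{raydense} (${\cal G}$ is a $G_\delta$ in the manifold $T^1{\cal T}(S)$, hence Polish, hence so is the closed subset $Y={\cal G}_0$); transitivity is proved for the combined $\langle G,\Phi^t\rangle$-action, with injectivity of $\pi$ modulo the flow replacing the conjugate-periodic-orbit observation (the pseudo-Anosov axis does the same work in both versions); and the Birkhoff transitivity theorem finishes, where you correctly note that only second countability of the space is needed, not countability of the acting group. Your route buys two things: it avoids the quotient topology on $N$ altogether, and it sidesteps the fact that $\Phi^t$ is only partially defined near points of $\overline{P{\cal G}_0}\setminus P{\cal G}_0$, a point the paper absorbs into ``continuity of the Weil-Petersson flow'' when it flows the closed sets $\overline{V_i}$; on $Y$ every flow line is complete. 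The paper's route buys that its intermediate claim, specialized to $G={\rm Mod}(S)$, is exactly topological transitivity of the Weil-Petersson flow on $T^1{\cal M}(S)$ (the corollary of Brock--Masur--Minsky recorded right after), with no further work; from your argument that corollary still follows, but only after projecting the dense combined orbit from ${\cal G}$ down to moduli space. Finally, the one step you assert without proof --- that $\pi|_Y$ is open onto $\Sigma$ --- is legitimate because $Y=\pi^{-1}(\Lambda\times\Lambda)$ is $\pi$-saturated, so openness restricts from Lemma \ref{fiber}; the paper makes the identical assertion for ${\cal G}_0$.
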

\begin{proof} 
Let $G<{\rm Mod}(S)$ be a non-elementary subgroup which
contains a pseudo-Anosov element. Let as before 
${\cal G}\subset T^1{\cal T}(S)$ be the space of all 
directions of biinfinite Weil-Petersson geodesics
in ${\cal T}(S)$ and let ${\cal G}_0\subset {\cal G}$ be
the space of all directions of geodesics with  
both endpoints in $\Lambda$.
Then ${\cal G}_0$ is a closed $G$-invariant
subset of the (non-locally compact) space 
${\cal G}$. By Lemma \ref{fiber}, the restriction 
to ${\cal G}_0$ of the map $\pi$
factors through a homeomorphism 
${\cal G}_0/\Phi^t\to \pi({\cal G}_0)\subset
\Lambda\times \Lambda$. Since a pair of fixed 
points of a pseudo-Anosov element $g\in G$ is 
contained in $\pi({\cal G})$, by Proposition
\ref{pairfixdense} the set $\pi{\cal G}_0$ is dense
in $\Lambda\times \Lambda$.

Let $P:T^1{\cal T}(S)\to T^1{\cal T}(S)/G=N$ be the
canonical projection.
We claim that for all nonempty open sets
$U,V\subset N$ with 
$U\cap P{\cal G}_0\not=\emptyset$ and 
$V\cap P{\cal G}_0\not=\emptyset$ 
and every $t>0$ there is some
$u\in U\cap P{\cal G}_0$ and some $T>t$ such that $\Phi^Tu\in V$.
For this let $\tilde U,\tilde V$ be the preimages of 
$U,V$ in $T^1{\cal T}(S)$. Then $\tilde U,\tilde V$ are open $G$-invariant
subsets of $T^1{\cal T}(S)$.
By Lemma \ref{fiber} the projection 
$\pi:{\cal G}\to \pi{\cal G}\subset
\partial\overline{{\cal T}(S)}\times
\partial\overline{{\cal T}(S)}$ is open,
there are open subsets $W_1,W_2$ of 
$\partial\overline{{\cal T}(S)}\times \partial\overline{{\cal T}(S)}$
with $W_1\cap \pi{\cal G}\subset \pi(\tilde U)$,
$W_2\cap \pi{\cal G}\subset \pi(\tilde V)$ and such that 
the intersections of $W_1,W_2$ with $\Lambda\times \Lambda$ 
are non-empty.

Since $W_i\cap \Lambda\times \Lambda\not=\emptyset$,
by the second part of Proposition \ref{pairfixdense} 
there is some $h\in G$
such that $W=W_1\cap h^{-1}W_2\not=\emptyset$ and that
$W\cap \Lambda\times \Lambda\not=\emptyset$.
Let $(a,b)\in W\cap \pi{\cal G}_0$ be the pair of endpoints of an axis
of a pseudo-Anosov element $g\in G$. Such 
an element exists by the first part of 
Proposition \ref{pairfixdense}.
Then the axis of the conjugate $hgh^{-1}$ of $g$ in $G$ 
has a pair of endpoints $(ha,hb)$ in 
$W_2\cap \pi{\cal G}_0$. Since the unit tangent lines of axes of
pseudo-Anosov elements which are conjugate in $G$ 
project to the same 
periodic orbit in $N$ for the Weil-Petersson
flow, this implies that the projection of the
unit tangent line of the axis of $g$ passes through
both $U$ and $V$. In particular, for $t>0$ and for
a point $x\in U\cap P{\cal G}_0$ contained in this periodic orbit, there
is some $T>t$ such that $\Phi^Tx\in V$. This shows our claim.

We use this observation 
to complete the proof of the corollary.
Namely, the closure $\overline{P{\cal G_0}}$ of $P{\cal G}_0$
in $N$ is locally compact and separable since this is the case for
$N$. Moreover, $\overline{P{\cal G}_0}$ is invariant under the
Weil-Petersson flow $\Phi^t$.
Hence we can choose a countable basis $U_i$ of 
open sets for $\overline{P{\cal G}_0}$. Let $V_1=U_1$ and 
for each $i\geq 2$ define inductively
a nonempty open set $V_i$ in $\overline{P{\cal G}_0}$ with 
$\overline{V_i}\subset V_{i-1}\subset U_1$ and a number
$t_i>t_{i-1}$ such that $\Phi^{t_i}\overline{V_i}\subset U_i$. This is possible
by the above consideration and by continuity of the Weil-Petersson flow.
Then $\cap_i \overline{V_i}\not=\emptyset$, and the forward
$\Phi^t$-orbit
of any point in $\cap_i\overline{V_i}$ is infinite and 
dense in $\overline{P{\cal G}_0}$. With the same argument
we can also guarantee that the backward $\Phi^t$-orbit
of a point $v\in \cap_i \overline{V_i}$ is infinite and dense in
$\overline{P{\cal G}_0}$.  
But this just means that for a lift $\tilde v$ of $v$
the $G$-orbit of $\pi(v)\in \Lambda\times\Lambda$
is dense. This completes the proof of the corollary.
\end{proof}

A flow $\Phi^t$ is called \emph{topologically transitive}
if it admits a dense orbit. 
As an immediate consequence of Lemma \ref{fiber} and
Proposition \ref{thm1finish} we
obtain the following result of Brock, Masur and Minsky \cite{BMM08}.

\begin{corollary}
The Weil-Petersson geodesic flow on $T^1{\cal M}(S)$ is topologically 
transitive.
\end{corollary}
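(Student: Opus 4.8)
The plan is to apply Proposition \ref{thm1finish} to the full extended mapping class group $G={\rm Mod}(S)$ and then to transport the resulting dense orbit back to the flow $\Phi^t$ by means of the equivariant homeomorphism of Lemma \ref{fiber}. First I would check that ${\rm Mod}(S)$ is a legitimate input for Proposition \ref{thm1finish}. By Lemma \ref{limitsetmod} its limit set $\Lambda$ is the entire boundary $\partial\overline{{\cal T}(S)}$, which in particular contains more than three points, so ${\rm Mod}(S)$ is non-elementary; and it evidently contains pseudo-Anosov elements. Consequently Proposition \ref{thm1finish} furnishes a dense orbit for the action of ${\rm Mod}(S)$ on $\Lambda\times\Lambda=\partial\overline{{\cal T}(S)}\times\partial\overline{{\cal T}(S)}$.

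Next I would translate this statement about the boundary action into a statement about $\Phi^t$. In the notation of the proof of Proposition \ref{thm1finish}, the set ${\cal G}_0$ of directions of biinfinite geodesics with both endpoints in $\Lambda$ coincides with all of ${\cal G}$, since $\Lambda=\partial\overline{{\cal T}(S)}$. By Lemma \ref{raydense}, ${\cal G}$ is dense in $T^1{\cal T}(S)$, so its image $P{\cal G}$ under the canonical projection $P\colon T^1{\cal T}(S)\to T^1{\cal M}(S)$ is dense; hence $\overline{P{\cal G}_0}=\overline{P{\cal G}}=T^1{\cal M}(S)$. The proof of Proposition \ref{thm1finish} already produces a point $v$ whose forward $\Phi^t$-orbit is dense in $\overline{P{\cal G}_0}$, and the homeomorphism of Lemma \ref{fiber} is precisely what identifies density of the $\Phi^t$-orbit of $v$ with density of the corresponding ${\rm Mod}(S)$-orbit on $\Lambda\times\Lambda$.

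Putting these together, the point $v$ has $\Phi^t$-orbit dense in $\overline{P{\cal G}_0}=T^1{\cal M}(S)$, which is exactly the assertion that $\Phi^t$ admits a dense orbit, i.e. is topologically transitive. The only points requiring genuine care are bookkeeping rather than substantial obstacles: one must confirm that the double quotient by the commuting actions of ${\rm Mod}(S)$ and $\Phi^t$ may be formed in either order, so that Lemma \ref{fiber} descends correctly, and that density of $P{\cal G}$ in $T^1{\cal M}(S)$ upgrades density of the orbit in $\overline{P{\cal G}}$ to density in the whole quotient. Since all the analytic content (contraction, north-south dynamics, the ruled-surface curvature estimate) has already been absorbed into Lemmas \ref{fiber} and \ref{raydense} and Proposition \ref{thm1finish}, no further hard step remains.
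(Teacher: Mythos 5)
Your proposal is correct and is essentially the paper's own argument: the paper derives this corollary as an immediate consequence of Lemma \ref{fiber} and Proposition \ref{thm1finish} applied to $G={\rm Mod}(S)$, with Lemma \ref{limitsetmod} supplying $\Lambda=\partial\overline{{\cal T}(S)}$ and Lemma \ref{raydense} supplying density of ${\cal G}$ in $T^1{\cal T}(S)$, exactly as you spell out. Your observation that one should invoke the point $v$ constructed inside the proof of Proposition \ref{thm1finish} (whose $\Phi^t$-orbit is dense in $\overline{P{\cal G}_0}=T^1{\cal M}(S)$), rather than only the bare statement of that proposition, is the right way to make the word ``immediate'' honest.
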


{\bf Remark:} For two points $\xi\not=\eta\in \partial\overline{{\cal T}(S)}$
it is in general difficult to decide whether 
$(\xi,\eta)\in \pi{\cal G}$, i.e. whether there is
a geodesic line connecting $\xi$ to $\eta$. 
However, Brock, Masur
and Minsky \cite{BMM08} showed the following.
Let $\gamma:[0,\infty)\to {\cal T}(S)$ be a geodesic
ray such that there is a number $\epsilon >0$ and
a sequence $t_i\to \infty$ with 
$\gamma(t_i)\in {\cal T}(S)_\epsilon$. Then 
$\gamma(\infty)$ can be connected
to every
$\xi\in \partial\overline{{\cal T}(S)}-\{\gamma(\infty)\}$
by a geodesic.
\bigskip

Even though the space $T^1{\cal M}(S)$ is non-compact and
the Weil-Petersson geodesic 
flow $\Phi^t$ on $T^1{\cal M}(S)$ is not everywhere defined, it admits
uncountably many invariant Borel-probability
measures. Indeed, it was shown in \cite{H08e} that there
is a continuous injection from the space of
invariant probability measures for the Teichm\"uller flow
into the space of invariant probability measures for
the Weil-Petersson geodesic flow. Specific such measures are
measures which are supported on periodic orbits.
Each such measure is \emph{ergodic}.

The space of $\Phi^t$-invariant Borel probability
measures for the Weil-Petersson flow can 
naturally be equipped with the weak$^*$-topology.
With respect to this topology, it is a closed convex
set in the topological vector space 
of all finite signed
Borel measures on ${\cal T}^1{\cal M}(S)$. The extreme
points of this convex set are just the ergodic measures.
Thus Theorem 2 
from the introduction is an immediate consequence of the 
following

\begin{proposition}\label{approximation}
Any $\Phi^t$-invariant ergodic Borel probability measure 
can be approximated in the weak$^*$-topology 
by measures supported on closed 
orbits.
\end{proposition}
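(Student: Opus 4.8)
The plan is to follow the classical strategy, going back to Sigmund, by which periodic orbit measures are dense in the space of invariant measures of a flow possessing a closing property, and to supply that closing property from the contracting geometry of thick Weil-Petersson geodesics recorded in Lemma \ref{wpcontr}. Let $\mu$ be the given ergodic measure, fix finitely many functions $f_1,\dots,f_k\in C_c(T^1{\cal M}(S))$ and some $\delta>0$; the goal is to produce a periodic orbit whose normalized orbit measure $\nu$ satisfies $\vert\int f_j\,d\nu-\int f_j\,d\mu\vert<\delta$ for all $j$. First I would choose $\epsilon>0$ so small that the compact set $K_\epsilon\subset T^1{\cal M}(S)$ of directions with foot point in the projection of ${\cal T}(S)_\epsilon$ has positive $\mu$-measure; such $\epsilon$ exists since the $K_\epsilon$ exhaust $T^1{\cal M}(S)$ as $\epsilon\to 0$ and $\mu$ is a probability measure. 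By Birkhoff's ergodic theorem, applied simultaneously to the $f_j$ and to the indicator of $K_\epsilon$, $\mu$-almost every $v$ is generic, so that the empirical measures $\frac 1T\int_0^T\delta_{\Phi^tv}\,dt$ converge weak$^*$ to $\mu$, and the forward orbit of $v$ returns to $K_\epsilon$ with frequency $\mu(K_\epsilon)>0$. Fix such a $v$.

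Since $K_\epsilon$ is compact and the orbit of $v$ meets $K_\epsilon$ along an unbounded set of times, for every $\rho>0$ I can find return times $0\le s<t$ with $t-s$ as large as I wish and with $\Phi^sv,\Phi^tv$ within distance $\rho$ of each other in $T^1{\cal M}(S)$. Writing $w=\Phi^sv$ and $L=t-s$, the chord identity $\frac 1L\int_0^Lf_j(\Phi^rw)\,dr=\frac1L\bigl(\int_0^tf_j(\Phi^rv)\,dr-\int_0^sf_j(\Phi^rv)\,dr\bigr)$ together with genericity of $v$ shows that, for $s,t$ and $L$ large, the empirical measure of the segment $\{\Phi^rw\mid 0\le r\le L\}$ is weak$^*$ close to $\mu$. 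Lifting $w$ to a unit tangent vector $\tilde w$ at a point of ${\cal T}(S)_\epsilon$ and letting $\sigma\colon[0,L]\to{\cal T}(S)$ be the geodesic with $\dot\sigma(0)=\tilde w$, the proximity of $\Phi^sv$ and $\Phi^tv$ in the quotient translates into the existence of $h\in{\rm Mod}(S)$ with $h\sigma(0)$ close to $\sigma(L)$ and $h\dot\sigma(0)$ close to $\dot\sigma(L)$, the quality of the matching being governed by $\rho$.

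The heart of the argument is a \emph{closing lemma}: provided $\rho$ is smaller than a threshold depending only on $\epsilon$, hence only on the contraction constant $B=B(\epsilon)$ of Lemma \ref{wpcontr}, the element $h$ is pseudo-Anosov, its axis stays within uniformly bounded Hausdorff distance of $\sigma[0,L]$, and its translation length differs from $L$ by a bounded amount. To establish this I would consider the biinfinite broken path $\bigcup_{n\in\mathbb Z}h^n\sigma[0,L]$. Since the foot points of $\sigma(0)$ and $\sigma(L)$ lie in ${\cal T}(S)_\epsilon$, the junctions $\sigma(L)\approx h\sigma(0)$ occur at thick points, where the matching of directions forbids backtracking; using the contracting property of the thick portions of the links together with the thin-triangle estimate of Lemma \ref{thintriangle}, a local-to-global argument for contracting geodesics in the spirit of \cite{BF08} shows that the broken path is a quasi-geodesic and therefore stays uniformly close to a genuine $h$-invariant geodesic line. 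That line is an axis for $h$ which meets ${\cal T}(S)$, so by the remark following Lemma \ref{semisimple2} the isometry $h$ is pseudo-Anosov, hence rank-one by Proposition \ref{flatrank}, and its axis projects to a periodic orbit of $\Phi^t$ of period close to $L$.

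Granting the closing lemma the proof concludes quickly: the axis of $h$ shadows $\sigma[0,L]$ to within a fixed distance and has length close to $L$, so by uniform continuity of the compactly supported $f_j$ the normalized measure $\nu$ on the corresponding periodic orbit satisfies $\int f_j\,d\nu\approx\frac1L\int_0^Lf_j(\Phi^rw)\,dr$, which is within $\delta$ of $\int f_j\,d\mu$ once $L$ is large and $\rho$ small. The main obstacle is precisely the closing lemma, and within it the control of the excursions of $\sigma[0,L]$ into the thin part: Lemma \ref{wpcontr} yields contraction only for segments remaining entirely in ${\cal T}(S)_\epsilon$, whereas a $\mu$-generic segment will in general leave every thick part. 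To handle this I would invoke the thick--thin analysis of Weil-Petersson geodesics of \cite{BMM08}, which controls such excursions well enough for the local-to-global argument at the thick junction points to go through; making this estimate quantitative and uniform in $L$ is the only delicate point.
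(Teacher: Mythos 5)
Your overall skeleton---Birkhoff genericity of a point $v$, recurrence of its orbit to a compact thick set, near-closing via an element $h\in{\rm Mod}(S)$, then a closing lemma producing a pseudo-Anosov element whose axis shadows the orbit segment---is exactly the skeleton of the paper's proof (the paper uses Poincar\'e recurrence at a density point to get $g_i^{-1}\gamma^\prime(t_i)\to\gamma^\prime(0)$ and the resulting nearly-closed segments $\zeta_i$). The divergence, and the problem, lies in the closing lemma itself, which is the entire mathematical content of the proposition. Your proposed proof of it---view $\bigcup_{n}h^n\sigma[0,L]$ as a broken path with thick junctions and good angle matching, and run a local-to-global argument for contracting geodesics---does not go through with the tools you invoke: Lemma \ref{wpcontr} makes a segment contracting only if it stays \emph{entirely} in ${\cal T}(S)_\epsilon$, while a $\mu$-generic orbit segment will in general make arbitrarily deep excursions into the thin part, where the Weil-Petersson curvature degenerates and no uniform contraction constant exists. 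Since $\overline{{\cal T}(S)}$ is ${\rm CAT}(0)$ but not hyperbolic (its completion contains flats), there is no Morse-type lemma for non-contracting paths, so good matching at the thick junctions alone cannot force the broken path to stay near a genuine $h$-invariant geodesic. You identify this obstacle yourself, but the fix you offer---an appeal to the thick-thin analysis of \cite{BMM08}, admittedly not made quantitative---is precisely the unproved step; moreover your claim that the closing threshold $\rho$ depends only on $\epsilon$ (equivalently on $B(\epsilon)$) is unsupported, and in the paper the relevant scales ($\tau$, $T(\delta)$) depend on the measure itself through the Birkhoff averages of the thick-part indicator.

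The paper closes this gap by a mechanism that needs no contraction property of the segment at all. Birkhoff's theorem applied to the characteristic function $\chi$ of the compact set $A$ of directions over ${\cal T}(S)_\epsilon$ shows that the orbit segment spends time at least $\pi/b\delta$ in the region where the curvature is at most $-b<0$. One then forms the ruled triangle $\Delta_i$ with vertices $x_0,x_i,g_ix_0$ (with $x_i$ on the axis, respectively a fixed point, of $g_i$); its intrinsic Gauss curvature is bounded above by the ambient curvature, so if the side $\eta_i$ stayed farther than $\delta$ from the segment along its thick portions, $\Delta_i$ would contain an embedded strip of width $\delta$ whose total curvature is less than $-\pi$, violating Gauss-Bonnet. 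This simultaneously rules out that $g_i$ is elliptic, shows $g_i$ is pseudo-Anosov (its axis enters ${\cal T}(S)$; see the remark after Lemma \ref{semisimple2}), pins its translation length to within a bounded amount of $t_i$, and places the axis within $\delta$ of $\zeta_i(T)$ and $\zeta_i(r_i-\tilde T)$ with $T,\tilde T$ depending only on $\delta$; convexity of the distance function then yields the shadowing and hence weak$^*$ convergence of the periodic orbit measures. Your argument needs an equivalent of this Gauss-Bonnet step (or a genuine Weil-Petersson closing lemma), and as written it does not contain one.
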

\begin{proof}
Let $\nu$ be a Borel probability measure on $T^1{\cal M}(S)$ which is
invariant and ergodic for the 
Weil-Petersson geodesic flow $\Phi^t$.
We have to find a sequence of 
periodic orbits for $\Phi^t$ 
so that the normalized 
Lebesgue measures $\nu_i$ supported on these orbits  
converge weakly as $i\to \infty$ to $\nu$. This means
that for every continuous function $f:T^1{\cal M}(S)\to \mathbb{R}$ with 
compact support we have
\[\int fd\nu_i\to \int fd\nu.\]

For this let $v\in T^1{\cal M}$ be
a (typical) density point for the measure $\nu$. 
By the Birkhoff ergodic theorem,
we have 
\[\lim_{t\to \infty} 
\frac{1}{t}\int_0^t f(\Phi^sv)ds= \int fd\nu \]
for every continuous function $f$ on $T^1{\cal M}(S)$ with compact support.
Thus it suffices to find a sequence of 
numbers $t_i\to \infty$ and a sequence of 
periodic orbits 
for $\Phi^t$ which are the supports of normalized $\Phi^t$-invariant 
measures $\nu_i$ such that
\begin{equation}\label{orbitap}
\vert \frac{1}{t_i}\int_0^{t_i}f(\Phi^sv)ds-
\int fd\nu_i \vert \to 0\,(i\to \infty)\notag
\end{equation}
for every continuous function 
$f:T^1{\cal M}(S)\to \mathbb{R}$ with compact support.

The Weil-Petersson metric induces a Riemannian metric
and hence a distance function $d_S$ on (the orbifold)
$T^1{\cal M}(S)$ (the so-called Sasaki metric).
Since a continuous function $f:T^1{\cal M}(S)\to
\mathbb{R}$ with
compact support is bounded and uniformly continuous, for
every $\epsilon >0$ there is a number $\delta >0$ 
depending on $f$ such that
\[\vert\frac{1}{T}\int_0^Tf(\Phi^tw)dt-\frac{1}{T}\int_0^T
f(\Phi^tu)dt\vert < \epsilon\]
whenever $w,u\in T^1{\cal M}(S)$
are such that $d_S(\Phi^tu,\Phi^tw)<\delta$ 
for all $t\in [\delta T,(1-\delta)T]$.

Since the sectional curvature of 
the Weil-Petersson metric is negative, comparison with
the euclidean plane
shows that the Sasaki distance in 
the covering space $T^1{\cal T}(S)$ can geometrically be
estimated as follows. Let $P:T^1{\cal T}(S)\to {\cal T}(S)$
be the canonical projection. Then 
for every $\delta >0$ there
is a number $R=R(\delta)>0$ with the following property.
Let $w,u\in T^1{\cal T}(S)$ be two unit tangent vectors
such that the flow-lines $\Phi^tw,\Phi^tu$ of $w,u$ 
are defined on the interval $[-R,R]$. If 
$d_{WP}(P\Phi^{t}u,P\Phi^{t}w)\leq 1/R$ for all
$t\in [-R,R]$ then $d_S(u,w)<\delta$.

Let $\gamma:\mathbb{R}\to {\cal T}(S)$ be a geodesic whose
initial tangent $\gamma^\prime(0)$ is a preimage of $v$.
By the above discussion and convexity of the distance
function, it suffices to 
find a sequence of 
numbers $t_i\to \infty$ and a sequence $(g_i)\subset {\rm Mod}(S)$ 
of pseudo-Anosov
elements with the following properties.
\begin{enumerate}
\item There is a number $p>0$ such that 
the translation
length of $g_i$ is contained in the interval $[t_i-p,t_i+p]$ 
for all $i$.
\item
For every number $\delta >0$ 
there is a number $T=T(\delta)>0$ not depending on $i$ such that
the distance
between the points $\gamma(T),\gamma(t_i-T)$ and the
axis $\gamma_i$ of $g_i$ is at most $\delta$ 
for all sufficiently large $i$. 
\end{enumerate}

For the construction of such a sequence of pseudo-Anosov
elements, note that
by the Poincar\'e recurrence theorem we may assume 
that there is a sequence
$t_i\to \infty$ such that $\Phi^{t_i}v\to v$ $(i\to \infty)$.
Let as before $\gamma:\mathbb{R}\to {\cal T}(S)$ be a geodesic
whose initial velocity is a preimage of $v$ and 
write $x_0=\gamma(0)$. Then there is a sequence $(g_i)\subset {\rm Mod}(S)$
such that 
\[g_i^{-1}\gamma^\prime(t_i)\to \gamma^\prime(0)\, (i\to \infty).\]
In particular, $d_{WP}(\gamma(t_i),g_ix_0)\to 0$
$(i\to \infty)$.

Let $\zeta_i:[0,r_i]\to {\cal T}(S)$ be the
geodesic connecting $x_0$ to $g_i(x_0)$. 
It follows from the above discussion that 
it suffices to show that for sufficiently large $i$ the 
element $g_i\in G$ is pseudo-Anosov and its axis 
has property 2) above where the 
geodesic arc $\gamma[0,t_i]$ in the statement is
replaced by the geodesic arc $\zeta_i$.

For this recall from Lemma \ref{semisimple2} that 
for each $i$, the element $g_i\in {\rm Mod}(S)$ either is axial or it
is elliptic. If $g_i$ is axial then 
let $\gamma_i:\mathbb{R}\to \overline{{\cal T}(S)}$ be
an oriented axis of $g_i$. 
Let $x_i=\pi_{\gamma_i(\mathbb{R})}(x_0)$.
Then we have $\pi_{\gamma_i(\mathbb{R})}(g_ix_0)=g_ix_i=
\gamma_i(\tau_i)$ where $\tau_i>0$ is the minimum of the
displacement function of $g_i$. Note that
$\tau_i\leq d_{WP}(x_0,g_ix_0)\leq t_i+1$ for all large $i$.
If $g_i$ is not axial then let $x_i$ be a fixed point of $g_i$.

\begin{figure}[ht]
\begin{center}
\psfrag{xi}{$x_i$}
\psfrag{gixi}{$gx_i$}
\psfrag{di}{$\gamma_i$}
\psfrag{ni}{$\eta_i$}
\psfrag{D}{$\Delta_i$} 
\psfrag{Si}{$\zeta_i$}
\psfrag{x0}{$x_0$}
\psfrag{gix0}{$g_ix_0$}
\includegraphics 
[width=0.8\textwidth] 
{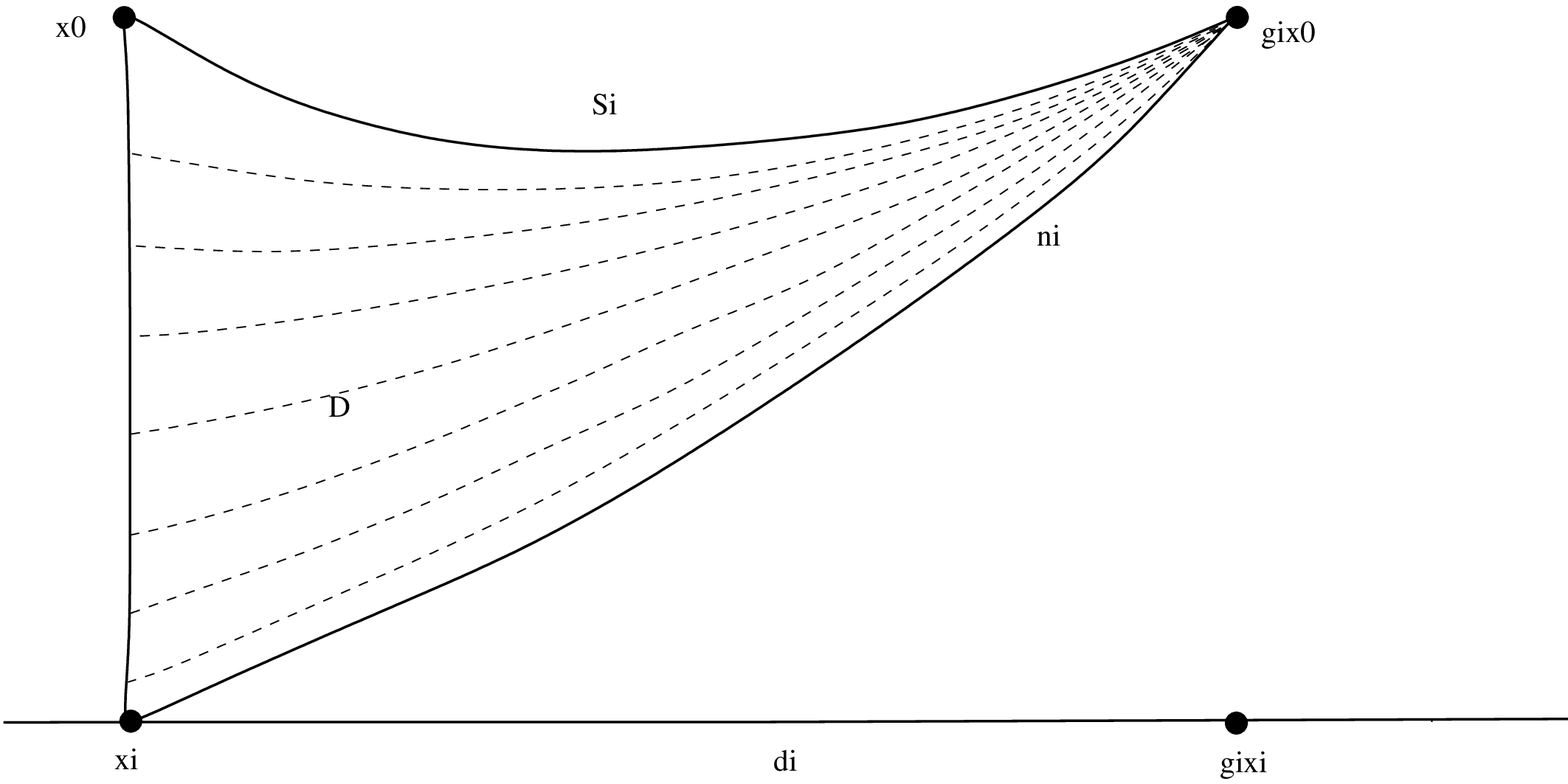}
\end{center}
\end{figure}

Consider the (possibly degenerate) geodesic quadrangle $Q_i$ in 
$\overline{{\cal T}(S)}$ with vertices 
$x_0,x_i,g_ix_i,g_ix_0$. 
The ${\rm CAT}(0)$-angle of $Q_i$ at the vertices $x_i,g_ix_i$
is not smaller than $\pi/2$.
Since $d_{WP}(x_0,x_i)=d_{WP}(g_ix_0,g_ix_i)$, 
by convexity of the distance function the angles of 
$Q_i$ at $x_0,g_ix_0$ do not exceed $\pi/2$. On the other hand,
by equivariance under
the action of $g_i$, the sum of the angles of $Q_i$ at 
$x_0$ and at $g_ix_0$ 
is not smaller than the angle at $x_0$ between the tangent
of the geodesic arc $\zeta_i$ and the negative
$-g_i\zeta_i^\prime(r_i)$ of the
tangent of the geodesic arc $g_i^{-1}\zeta_i$.
Now $\zeta_i^\prime(0)\to \gamma^\prime(0)$, 
$g_i^{-1}\zeta_i^\prime(r_i)\to \gamma^\prime(0)$ and therefore
this angle converges to $\pi$ as $i\to \infty$. Thus
we may assume that 
the minimum of the two angles of $Q_i$ at $x_0,g_ix_0$ 
is bigger than $\pi/4$ for all $i$.

Let $\Delta_i\subset \overline{{\cal T}(S)}$ be the
ruled triangle with vertices $x_0,x_i,g_ix_0$ 
which we obtain by
connecting the vertex $g_ix_0$ to each point on the opposite
side by a geodesic (see the figure).
A geodesic for the Weil-Petersson metric
connecting a point in ${\cal T}(S)$ to a point
in $\overline{{\cal T}(S)}$ is entirely contained
in ${\cal T}(S)$ except possibly for its endpoint \cite{W03,W06}.
Thus $\Delta_i-x_i$ is a smooth embedded surface
in ${\cal T}(S)$ whose intrinsic Gau\ss{} curvature is defined. 
The Gau\ss{} curvature at a point $x\in \Delta_i$ 
does not exceed the maximum of the 
sectional curvatures of the Weil-Petersson metric at $x$ 
(compare the proof of Lemma \ref{fiber} and see \cite{BMM08}
for more and references).

Let $\epsilon>0$ be such that $x_0\in {\cal T}(S)_{2\epsilon}$
and let 
$A\subset T^1{\cal M}(S)$ be the projection
to $T^1{\cal M}(S)$ of the set of all unit tangent vectors in 
$T^1{\cal T}(S)$ with foot-point in ${\cal T}(S)_\epsilon$.
Then $A$ is compact.
Let $\chi$ be the characteristic
function of $A$. Since $v$ is a density point for
$\nu$ by assumption and since there is a neighborhood
of $v$ in $T^1{\cal M}(S)$ which is entirely contained
in $A$, we have
$\int\chi d\nu >0$. Let $r>0$ be sufficiently
small that the sectional curvature of the restriction
of the Weil-Petersson metric to the 
$r$-neighborhood of ${\cal T}(S)_{\epsilon}$
is bounded from above by a negative constant $-b<0$.
Such a number exists by invariance under the action of
the mapping class group and cocompactness.

By ${\rm CAT}(0)$-comparison, there is a number
$s>0$ with the following property. Let $R,S>0$ and let 
$\zeta:[0,R]\to 
\overline{{\cal T}(S)}$, $\eta:[0,S]\to \overline{{\cal T}(S)}$  
be any two geodesics
issuing from $\eta(0)=\zeta(0)=x_0$ which enclose 
an angle at least $\pi/4$. Then
$\eta$ does not pass through the $r$-neighborhood of
$\zeta[s,R]$. 
For this number $s$ and 
for any number $\delta <r/3$ let 
$\tau=\tau(\delta)>0$ be such that
$\chi(\Phi^{\tau}v)=1$ and 
$\int_s^\tau \chi(\Phi^sv)ds\geq \pi/b\delta$.
Such a number exists since by the Birkhoff ergodic theorem
we have 
\[\lim_{t\to \infty}\frac{1}{t}\int_0^t\chi(\Phi^sv)ds=
\int \chi d\nu >0.\]

Let $i>0$ be sufficiently large that $t_i>\tau+2r$,
and that 
$d_{WP}(\zeta_i(r_i),\gamma(t_i))\leq \delta/2$.
By convexity, we then have $d_{WP}(\zeta_i(t),\gamma(t))\leq \delta$ 
for all $t\in [0,r_i]$.
Let $\eta_i$ be the side of $\Delta_i$ connecting $x_i$ to
$g_ix_0$. If $c$ is a geodesic arc in $\Delta_i$ of length
at most $r$ issuing from a point in $\zeta_i[s,R]$ then $c$
does not intersect the side of $\Delta_i$ connecting $x_i$ to 
$x_0$ and hence either it ends on a point in $\eta_i$ or
it can be extended. Thus if the distance between
$\zeta_i(\tau)$ and $\eta_i$ is bigger than
$\delta$ then by convexity 
the ruled triangle $\Delta_i$ contains an embedded strip of 
width $\delta<r/3$ with the arc $\zeta_i[s,\tau]$ as
one of its sides.
This strip is the union of all geodesic arcs of length $\delta$ 
in $\Delta_i$ with respect to the intrinsic metric 
which issue from a point in $\zeta_i[s,\tau]$ and which
are perpendicular to $\zeta_i$. 
If $c$ is such a geodesic arc issuing from a point
$\zeta_i(t)$ where $t\geq s$ is such that 
$\gamma_i^\prime(t)\in A$, then the 
Gau\ss{} curvature of $\Delta_i$ on each point of $c$
does not exceed $-b$. By the choice of $\tau$ and by
volume comparison, the
Lebesgue measure of the set of all points on such geodesic 
arcs is not smaller than $\pi/b$. 
Since the Gauss curvature of 
$\Delta_i$ is negative, this
implies that the
integral of the Gauss curvature of $\Delta_i$ over 
this strip is smaller than $-\pi$. However, this violates   
the Gauss-Bonnet theorem
(compare \cite{H08e} for more details for this
argument, and see also the proof of Lemma \ref{fiber}).

As a consequence, the geodesic $\eta_i$ 
passes through the $\delta$-neighborhood of the point 
$\zeta_i(\tau)$. This implies that for sufficiently large $i$
the isometry $g_i$ is not elliptic. 
Namely, we have $d_{WP}(x_i,\zeta_i(\tau))\geq
d_{WP}(x_0,x_i)-\tau$ and 
$d_{WP}(x_0,g_ix_0)\geq t_i-\delta$ and hence
if $t_i>2\tau+2\delta$ 
then \begin{align}\label{elliptic}
d_{WP}(x_i,g_ix_0) & \geq 
d_{WP}(x_i,\zeta_i(\tau))+t_i-\tau-2\delta \\ \geq 
d_{WP}(x_i,x_0)+t_i-2\tau-2\delta & >
d_{WP}(x_i,x_0).\notag\end{align}
On the other hand, if $g_i$ is elliptic then we have
$d_{WP}(x_i,x_0)=d_{WP}(g_ix_i,g_ix_0)=d_{WP}(x_i,g_ix_0)$
which contradicts inequality (\ref{elliptic}).
Thus $g_i$ is axial for all sufficiently large $i$. 
Let $\gamma_i$ be an oriented axis for $g_i$.

We observe next that $g_i$ is pseudo-Anosov for all $i$.
Namely, using the above notation, let $s_i>0$ be
such that $d_{WP}(\zeta_i(\tau),\eta_i(s_i))<\delta$. 
If the distance between the axis $\gamma_i$ of $g_i$ 
and $\eta_i(s_i)$ is
smaller than $\delta$ then the axis $\gamma_i$ of $g_i$
passes through the $3\delta<r$-neighborhood of a point
in ${\cal T}(S)_\epsilon$
and hence it passes through a point in ${\cal T}(S)$.
As a consequence, $g_i$ is pseudo-Anosov (see the remark 
after Lemma \ref{semisimple2}).

On the other hand,
if the distance between $\gamma_i$ and $\eta_i(s_i)$ is
bigger than $\delta$ then 
let $T>\tau$ be such that $\int_s^T\chi(\Phi^sv)ds\geq 2\pi/b\delta$
and $\chi(\Phi^Tv)=1$. Apply the above
consideration to the 
ruled triangle $\hat\Delta_i$ with vertices $x_i,g_ix_i,
g_ix_0$ which we
obtain by connecting $x_i$ to each  
point on the opposite side by a geodesic segment and to the 
subarc $\eta_i[s_i,s_i+T-\tau]$ 
of $\eta_i$.
We conclude that 
$d_{WP}(\eta_i(s_i+T-\tau),\gamma_i)<\delta$ and once
again, $\gamma_i$ passes through a point in
${\cal T}(S)$ and $g_i$ is pseudo-Anosov.

By the above consideration, 
the axis $\gamma_i$ of $g_i$ passes through
the $\delta$-neighborhood of $\zeta_i(T)$ where
$T=T(\delta)$ only depends on $\delta$. The same
argument shows that this axis also
passes through the $\delta$-neigbhorhood of 
$\zeta_i(r_i-\tilde T)$ where once more $\tilde T>0$ only depends on 
$\delta$ (assume without loss of generality
that $-v$ is a density point for the image
of $\nu$ under the flip $w\to -w$ and use the fact that
two orbit segments of the same finite length are uniformly close
if their initial points are close enough).
In particular, the translation length of $g_i$ is contained
in the interval $[t_i-T-\tilde T,t_i+1]$. 

As a consequence, $(g_i)\subset {\rm Mod}(S)$ is 
a sequence of pseudo-Anosov elements which satisfies
the conditions 1),2) above. Therefore the 
normalized Lebesgue
measures on the projections to $T^1{\cal M}(S)$ of the
unit tangent lines of the axes of the elements
$g_i$ converge weekly to $\nu$. This completes
the proof of the proposition.
\end{proof}

\bigskip

\noindent
MATHEMATISCHES INSTITUT DER UNIVERSIT\"AT BONN\\
BERINGSTRA\SS{}E 1\\
D-53115 BONN\\

\smallskip

\noindent
e-mail: ursula@math.uni-bonn.de


\begin{thebibliography}{BM99}





\bibitem{B95} W.~Ballmann, {\em Lectures on Spaces
of Nonpositive curvature}, DMV Seminar 25, Birkh\"auser,
Basel, Boston, Berlin 1995.



\bibitem{BGS85} W.~Ballmann, M.~Gromov, V.~Schroeder,
{\sl Manifolds of nonpositive curvature},
Birkh\"auser, Boston, Basel, Stuttgart 1985.





\bibitem{BF08} M.~Bestvina, K.~Fujiwara, 
{\it A characterization of higher rank symmetric
spaces via bounded cohomology}, arXiv:math/0702274,
to appear in GAFA.




\bibitem{BH99} M.~Bridson, A.~Haefliger, {\sl Metric
spaces of non-positive curvature}, Springer, Berlin Heidelberg
1999.


\bibitem{B05} J.~Brock, {\em The Weil-Petersson 
visual sphere}, Geom. Dedicata 115 (2005), 1--18.


\bibitem{BM08} J.~Brock, H.~Masur, {\em Coarse and synthetic
Weil-Petersson geometry: quasi-flats, geodesics and relative
hyperbolicity}, Geom. Top. 12 (2008), 2453-2495.


\bibitem{BMM08} J.~Brock, H.~Masur, Y.~Minsky,
{\em Asymptotics of Weil-Petersson geodesics I: ending
laminations, recurrence, and flows}, arXiv:0802.1370,
to appear in GAFA.



















\bibitem{DW03} G.~Daskalopoulos, R.~Wentworth, 
{\em Classification of Weil-Petersson isometries}, 
Amer. J. Math. 125 (2003), 941--975.



















\bibitem{H08e} U.~Hamenst\"adt, {\em Invariant measures
for the Weil-Petersson flow}, preprint, August 2008.



\bibitem{H08d} U.~Hamenst\"adt, {\em Rank-one isometries
of proper ${\rm CAT}(0)$-spaces}, arXiv:0810.3794. 











\bibitem{K83} S.~Kerckhoff, {\em The Nielsen 
realization problem}, Ann. Math. 117 (1983), 235--265.






\bibitem{M76} H.~Masur, {\em The extension of the Weil-Petersson
metric to the boundary of Teichm\"uller space},
Duke Math. J. 43 (1976), 623--635.



\bibitem{MW02} H.~Masur, M.~Wolf, {\em The Weil-Petersson isometry
group}, Geometriae Dedicata 93 (2002), 177--190.


\bibitem{MC85} J.~McCarthy, {\em A ``Tits alternative'' for
subgroups of surface mapping class groups},
Trans. AMS 291 (1985), 583--612.

\bibitem{MCP89} J.~McCarthy, A.~Papadopoulos, {\em
Dynamics on Thurston's sphere of projective measured
foliations}, Comm. Math. Helv. 64 (1989), 133--166.



\bibitem{Mi96} Y.~Minsky, {\em Quasi-projections in Teichm\"uller space},
J. Reine Angew. Math. 473 (1996), 121--136.








\bibitem{W03} S.~Wolpert, {\em Geometry
of the Weil-Petersson completion of Teichm\"uller space}, 
in ``Surveys in Differential Geometry'' Vol. VIII (Boston, MA, 2002),
Int. Press (2003), 357--393.

\bibitem{W06} S.~Wolpert, {\em Convexity of the geodesic-length
function: a reprise}, in ``Spaces of Kleinian group'', London
Math. Soc. Lecture Notes, Cambridge Univ. Press 2006, 233--245.



\end{thebibliography}
\end{document}